\newtheorem{thm}{Theorem}[section]
\newtheorem{prop}[thm]{Proposition}
\newtheorem{cor}[thm]{Corollary}
\newtheorem{lem}[thm]{Lemma}
\theoremstyle{definition}
\newtheorem{defn}[thm]{Definition}
\theoremstyle{remark}
\newtheorem{rem}[thm]{Remark}
\newtheorem{ex}[thm]{Example}
\newcommand{\h}[1]{%
H^*({#1}; {\mathbb K})}
\newcommand{\K}{{\mathbb K}}
\newcommand{\Q}{{\mathbb Q}}
\newcommand{\e}{\varepsilon}
\newcommand{\D}{\text{D}}
\newcommand{\mapright}[1]{%
\smash{\mathop{%
\hbox to 1cm{\rightarrowfill}}\limits_{#1}}}
\newcommand{\maprightd}[2]{%
\smash{\mathop{%
\hbox to 1.2cm{\rightarrowfill}}\limits^{#1}\limits_{#2}}}
\newcommand{\mapleft}[1]{%
\smash{\mathop{%
\hbox to 1cm{\leftarrowfill}}\limits_{#1}}}
\newcommand{\mapleftu}[1]{%
\smash{\mathop{%
\hbox to 0.8cm{\leftarrowfill}}\limits^{#1}}}
\newcommand{\maprightu}[1]{%
\smash{\mathop{%
\hbox to 1cm{\rightarrowfill}}\limits^{#1}}}
\newcommand{\maprightud}[2]{%
\smash{\mathop{%
\hbox to 1cm{\rightarrowfill}}\limits^{#1}_{#2}}}
\newcommand{\mapleftud}[2]{%
\smash{\mathop{%
\hbox to 1cm{\leftarrowfill}}\limits^{#1}_{#2}}}
\newcounter{eqn}[section]
\def\theeqn{\textnormal{(\thesection.\arabic{eqn})}}
\def\eqnlabel#1{%
\refstepcounter{eqn}%
\label{#1}%
\leqno{\theeqn}}
\begin{document}

\title[The (co)chain type levels]
{Upper and lower bounds of the (co)chain type level of a space} 

\footnote[0]{{\it 2000 Mathematics Subject Classification}: 
16E45, 18E30, 55R20, 13D07.
\\ 
{\it Key words and phrases.} Level, semi-free module,  
triangulated category, formal space, Borel construction, L.-S. category.


Department of Mathematical Sciences, 
Faculty of Science,  
Shinshu University,   
Matsumoto, Nagano 390-8621, Japan   
e-mail:{\tt kuri@math.shinshu-u.ac.jp}
}

\author{Katsuhiko KURIBAYASHI}
\date{}

\begin{abstract}
We establish an upper bound for the cochain type level of the total space of 
a pull-back fibration. It explains to us why the numerical invariants
for principal bundles over the sphere are less than or equal to
two. Moreover computational examples of the levels of path spaces and
Borel constructions, including biquotient spaces and
Davis-Januszkiewicz spaces, are presented.  We also show that the chain
type level of the homotopy fibre  of a map is greater than 
the E-category in the sense of Kahl, which is an algebraic
approximation of the Lusternik-Schnirelmann category of the map. 
The inequality fits between the grade and 
the projective dimension of the cohomology of the homotopy fibre.
\end{abstract}

\maketitle

\section{Introduction}
The {\it  level} of an object in a triangulated category  
was defined by Avramov, Buchweitz, Iyengar and Miller in \cite{ABIM}.  
The numerical invariant measures the number of steps to build the given
object from some fixed object via triangles. 
As for the level defined in the derived category  
$\D(A)$ of differential graded modules 
(DG modules) over a differential graded algebra (DGA) $A$, which is
viewed as a triangulated category \cite{Keller}, its important and
fundamental properties are investigated in \cite[Sections 3, 4 and 5]{ABIM}.
Moreover, these authors have established many lower bounds of the Loewy
length of a module over a ring $R$ by means
of the invariant level; see \cite[Introduction]{ABIM}. 

The level filters the smallest thick subcategory 
of a triangulated category ${\mathcal T}$ 
containing a given subcategory and hence the
invariant is regarded as a refinement of the notion of 
{\it finite building} for an object in ${\mathcal T}$ due to 
Dwyer, Greenlees and Iyengar \cite{D-G-I}; see also \cite{B-G} and  
\cite{G-H-S}.   
We also mention that the levels   
are closely related to the notion of
dimensions of triangulated categories; see \cite[2.2.4]{ABIM}, 
\cite{B-B} and \cite{R}. 

To study topological spaces with categorical representation theory, 
we were looking for an appropriate invariant which stratifies the
category of topological spaces in some sense and found the
invariant level at last.  Thus a topological invariant of a space $X$ 
over a space $B$, which is called the {\it cochain type level of} $X$ over the space $B$,  
was introduced in \cite{K2}. 

Let $C^*(B; \K)$ be the normalized 
singular cochain algebra of a space $B$ 
with coefficients in a field $\K$. Then the level of $X$ over a space
$B$ is defined to be the level in the sense of \cite{ABIM} of the DG module 
$C^*(X; \K)$ over the DG algebra $C^*(B; \K)$ in the triangulated category 
 $\D(C^*(B; \K))$; see Section 2 for more details.      
It turns out that the level of $X$ characterizes indecomposable
elements of $\D(C^*(B; \K))$ which make up the $C^*(B; \K)$-module 
$C^*(X; \K)$ in the triangulated category. Such constitutions are called 
{\it molecules} of $C^*(X; \K)$ in \cite{K2}. 
In order to make the observation more clear, we  recall 
some properties of the triangulated category $\D(C^*(B; \K))$. 

By applying Auslander-Reiten theory for derived categories \cite{H} \cite{H2}, 
J{\o}rgensen \cite{J} \cite{J2} has clarified the structure of the 
Auslander-Reiten quiver of the full
subcategory 
$\D^c(C^*(B; \K))$ of compact objects of $\D(C^*(B; \K))$ 
provided the space $B$ is 
Gorenstein at $\K$ in the sense of F\'elix, Halperin and Thomas
\cite{FHT_GSpace}.   
In fact, the result \cite[Theorem 0.1]{J2} tells us that each component
of the quiver is of the form ${\mathbb Z}A_\infty$; see also \cite{J}
and \cite{Schmidt}. 
Depending on the detailed information of the quiver of 
$\D^c(C^*(S^d; \K))$, 
the computation of the level of an appropriate 
space over the sphere $S^d$ is performed in \cite{K2}.  
In particular, we see that the level of a space $X$ over $S^d$ is less
than or equal to an integer $l$ if and only if the DG module $C^*(X; \K)$ over
$C^*(S^d; \K)$ is made up of molecules lying between the $l$th horizontal
line and the bottom one of the quiver; 
see  \cite[Proposition 6.6]{Schmidt},  \cite[Examples 5.2 and 5.3]{K2}  
and \cite[Theorem 8.13]{J}. 

On the other hand, the result \cite[Theorem 2.12]{K2} asserts that 
there exists just one vertex in 
the Auslander-Reiten quiver which is realized by a space over $S^d$ 
via the singular cochain functor. This means that   
if the level of a space $X$ over $S^d$ is greater than or equal to three, 
then the DG module $C^*(X; \K)$ consists of at least two
molecules in $\D^c(C^*(S^d; \K))$; see \cite[Theorem 2.6]{K2}.  
Moreover the result \cite[Proposition 2.4]{K2} implies that all of 
the levels of total spaces of 
principal $G$-bundles over the $4$-dimensional sphere   
are less than or equal to two if the cohomology 
of the classifying space of $G$ is isomorphic to
a polynomial algebra on generators with even degree. 

As mentioned above, the level of a DG module $M$ in the triangulated
category $\D(A)$ of a DG algebra $A$ counts the number
of steps to build $M$ out of, for example, $A$ via triangles in $\D(A)$. 
In \cite[Proposition 2.6]{K2}, it is shown that the cochain type level
gives a lower estimate of the number of a pile of 
rational spherical fibrations. 
Thus an important issue is to clarify further topological quantity which
the level measures. 

As a first step, many computations of levels might be needed.  
In this paper, we present a method for computing the levels of 
spaces. In particular, we obtain an upper bound for the level of the
corner space of a fibre square; see Theorem \ref{thm:main_new}. 
Moreover,  we try to compute the level of path spaces and
Borel constructions, including biquotient spaces \cite{Sin} and
Davis-Januszkiewicz spaces \cite{DJ} \cite{Pa}. 

We also introduce the {\it chain type level} of a space and consider 
the relationship between the level and other topological invariants.  
Especially, the chain type level of the homotopy fibre of a map $f$
gives an upper estimate for the E-category in the sense of Kahl \cite{Kahl}, 
which is an algebraic approximation of 
the Lusternik-Schnirelmann category (L.-S.category) of $f$; 
see Theorem \ref{thm:cat-level}. This is one of the remarkable results on
the level. 
Thus we can bring the notion of the level into the study of L.-S. categories and their related invariants. 
It turns out that the L.-S. category of a simply-connected
rational space $X$ has an upper bound described 
in terms of the chain type level associated with the space $X$; 
see Corollary \ref{cor:R-case}. 
This is an answer to a topological description of the level.

\section{The (co)chain type levels and main theorems}

In this section, our results are stated in detail. 
We begin by recalling the explicit definition of the level.  

Let $\D$ be a triangulated category and ${\mathcal A}$ a subcategory of
$\D$.  We denote by $\text{\tt add}^{\Sigma}({\mathcal A})$ the smallest 
strict full subcategory of $\D$ that contains ${\mathcal A}$ and is closed under finite 
direct sums and all shifts. The category  $\text{\tt smd}({\mathcal A})$
is defined to be the smallest full subcategory of $\D$ that contains ${\mathcal A}$
and is closed under retracts.   
For full subcategories ${\mathcal A}$ and ${\mathcal B}$ of $\D$, let 
${\mathcal A}*{\mathcal B}$ be 
the full subcategory whose objects $L$ occur in a triangle 
$
M \to L \to N \to \Sigma M
$ 
with $M \in {\mathcal A}$ and $N \in {\mathcal B}$.  
We define $n$th thickening  $\text{\tt thick}^n_{\D}({\mathcal C})$ of a 
full subcategory ${\mathcal C}$ by 
$$
\text{\tt thick}^n_{\D}({\mathcal C}) = 
\text{\tt smd}((\text{\tt add}^{\Sigma}({\mathcal C}))^{*n}),  
$$ 
where $\text{\tt thick}^0_{\D}({\mathcal C}) = \{0\}$; see 
\cite{B-B} and \cite[2.2.1]{ABIM}.

Let $A$ be a DG algebra over a field and $\D(A)$ the triangulated
category of DG modules over $A$ \cite{Keller}. 
We then define a numerical invariant $\text{level}_{\D(A)}(M)$ for an
object $M$ in $\D(A)$, which is
called the {\it level of} $M$,  by 
$$
\text{level}_{\D(A)}(M):= \inf \{n \in {\mathbb N}  \ |
M \in  \text{\tt thick}^{n}_{\D(A)}(A) \}. 
$$
If no such integer exists, we set $\text{level}_{\D(A)}(M)=\infty$. 
Here $A$ is regarded as the full subcategory of $\D(A)$ consisting
of the only object $A$.  We refer the reader to \cite[2.1]{ABIM} for
the levels defined in  more general triangulated categories and their
fundamental properties.

In what follows, let $\K$ be a field of arbitrary characteristic and 
all coefficients of (co)chain complexes are in $\K$. 
Moreover, unless otherwise specified, 
it is assumed that a space has the homotopy
type of a CW complex whose cohomology with coefficients in the
underlying field is locally finite; that is, the $i$th cohomology is of
finite dimension for any $i$.

Let $B$ be a space and $\mathcal{TOP}_B$ the category of maps with the
target $B$. For any object $f : X \to B$,    
the normalized singular cochain $C^*(X; \K)$ of the source space $X$ of $f$ is regarded as
a DG module over the cochain algebra $C^*(B; \K)$ 
via the induced map $C^*(f) : C^*(B; \K) \to C^*(X; \K)$. 
Thus the cochain gives rise to a contravariant functor from the category 
$\mathcal{TOP}_B$ to the triangulated category $\D(C^*(B; \K))$:    
$$
C^*(s(-) ; \K) :  \mathcal{TOP}_B \to \D(C^*(B; \K)), 
$$
where $s(f)$ for $f$ in $\mathcal{TOP}_B$ denotes the source of
$f$. 
We say that a morphism $\varphi : f \to g$ in $\mathcal{TOP}_B$ is 
a weak equivalence if so is the underlying map  $\varphi : s(f) \to s(g)$. 
We write $\text{level}_{\D(C^*(B; \K))}(s(f))$ for $\text{level}_{\D(C^*(B; \K))}(C^*(s(f); \K))$ and
refer to it as the {\it cochain type level} of the space $s(f)$. Since a
weak equivalence in $\mathcal{TOP}_B$ induces a quasi-isomorphism of 
$C^*(B; \K)$-modules, it follows that the cochain type level is a
numerical homotopy invariant.  

Let $F_f$ be the homotopy fibre of a map $f : X \to B$. 
The Moore loop space $\Omega B$ acts on the space $F_f$ 
by the holonomy action. Thus the normalized chain complex $C_*(F_f; \K)$ is a DG
module over the chain algebra $C_*(\Omega B; \K)$. 
The chain and the homotopy fibre construction enable us 
to obtain a covariant functor  
$$
C_*(F_{(-)} ; \K) :  \mathcal{TOP}_B \to \D(C_*(\Omega B; \K))
$$
from the category $\mathcal{TOP}_B$ 
to the triangulated category $\D(C_*(\Omega B; \K))$.     
We then define the {\it chain type level} of the space $F_f$ by 
$\text{level}_{\D(C_*(\Omega B; \K))}(C_*(F_f; \K))$ and denote it 
by $\text{level}_{\D(C_*(\Omega B; \K))}(F_f)$.   
It is immediate that the chain type level is also a  
numerical topological invariant for objects  in 
$\mathcal{TOP}_B$ with respect to weak equivalences.

We first examine especially the cochain type levels 
of spaces $E_{\varphi}$ 
which fit into any of the fibre squares ${\mathcal F}_1$, ${\mathcal F}_2$ and 
${\mathcal F}_3$ explained below.   
Let $B$ be a space with basepoint $*$ and $B^I$ the space of all maps 
from the interval $[0, 1]$ to $B$ with the compact-open topology.  
Let $PB$ denote the subspace of $B^I$ of all paths ending at $*$.
We define a map $\e_i : B^I \to B$ by $\e_i(\gamma)=\gamma(i)$ for 
$i=0$ and $1$. Then one obtains 
fibre squares ${\mathcal F}_1$ and ${\mathcal F}_2$ of the forms 
$$
\xymatrix@C20pt@R8pt{
E_{\varphi} \ar[r] \ar[dd] & PB \ar[dd]^{\e_0}  &  &
              E_{\varphi} \ar[r] \ar[dd] & B^I \ar[dd]^{\e_0\times \e_1} \\
          & & \text{and} &   \\ 
X  \ar[r]^{\varphi}  & B  &  &  X  \ar[r]^(0.4){\varphi}  & B\times B,   
}
$$
respectively. Observe that $E_\varphi$ in ${\mathcal F}_1$ is nothing but
the homotopy fibre of the map $\varphi : X \to B$. 
In particular if the map $\varphi$ in ${\mathcal F}_2$ is the diagonal map 
$B \to B\times B$, then $E_\varphi$ is the free loop space; see
\cite{S} and \cite{K-Y} for applications of the fibre square 
to the computation of the cohomology of a free loop space. 

Let $G$ be a connected Lie group and $H$ a closed subgroup of 
$G\times G$. Let $\delta G$ denote the closed subgroup defined by 
$\delta G = \{(g,g)\in G\times G  \ | \ g\in G \}$. 
Then one has a fibre square ${\mathcal F}_3$ of the form
$$
\xymatrix@C25pt@R25pt{
E_{\varphi} \ar[r] \ar[d] & E(G\times G)/\delta G \ar[d]^{q} \\
BH  \ar[r]^(0.4){\varphi}  & B(G\times G),    
}
$$
where $\varphi$ denotes the map induced by the inclusion 
$j : H \to G\times G$ between 
the classifying spaces; see \cite[Section 4]{JHE}. Here the total space 
$E_\varphi$ is the Borel construction $E(G\times G)\times_{H}G$ 
associated with the action $H\times G \to G$ 
defined by $(h, k)g = hgk^{-1}$ for $(h, k) \in H$ and $g \in G$. 
We mention that this total space is homotopy equivalent to 
a double coset manifold under some hypotheses; 
see \cite{JHE} and \cite[(1.7), (2.2) Proposition]{Sin} for more details.  

In the fibre squares ${\mathcal F}_1$ and ${\mathcal F}_2$, 
if the space $B$ is simply-connected and satisfies the condition that 
$\dim H^*(B; \K) < \infty$, then the cohomology $H^*(\Omega B; \K)$ 
of the fibre is of infinite dimension. This follows from the Leray-Serre
spectral sequence argument for the path-loop fibration 
$\Omega B \to PB \to B$. 
Therefore the results 
\cite[Lemma 3.9, 6.3.2]{Schmidt} allow us to conclude that 
$\text{level}_{\D(C^*(X; \K))}(E_\varphi) =\infty$. Then in
this paper we shall confine ourselves to considering the cochain type level of the
space $E_\varphi$ in the case where $H^*(B; \K)$ is a polynomial
algebra.  

The first result is concerned with an upper bound of the cochain type
level of the corner space $E_\varphi$ in any of fibre squares
${\mathcal F}_1$,  ${\mathcal F}_2$ and ${\mathcal F}_3$. 
To describe the result precisely, 
we recall from \cite{K1} an important  class of pairs of maps. 
We say that a space $X$ is {\it $\K$-formal} 
if it is simply-connected and there exists a
quasi-isomorphism to the cohomology $H^*(X; \K)$ 
from a minimal $TV$-model for $X$ in the sense of Halperin 
and Lemaire \cite{H-L}; see also \cite{E}. 
In this case we have a sequence of quasi-isomorphisms
$$
\xymatrix@C25pt@R20pt{
\h{X} & TV_X  \ar[l]_(0.4){\phi_X}^(0.4){\simeq} 
\ar[r]^(0.4){m_X}_(0.4){\simeq}  & C^*(X; {\mathbb K}),
} 
$$
where $m_X : TV_X \to C^*(X; {\mathbb K})$ denotes a minimal 
$TV$-model for $X$. 
Let $q : E \to B$ and $\varphi : X \to B$ be maps between 
$\K$-formal spaces. Then the pair $(q, \varphi)$ is called 
{\it relatively $\K$-formalizable} 
if there exists a commutative diagram up to homotopy 
$$
\xymatrix@C25pt@R15pt{
\h{E}  & TV_E \ar[l]_(0.4){\phi_E}^(0.4){\simeq} 
\ar[r]^(0.4){m_E}_(0.4){\simeq} & C^*(E; {\mathbb K})  \\
\h{B} \ar[u]^{H^*(q)} \ar[d]_{H^*(\varphi)}
& TV_B  \ar[l]_(0.4){\phi_B}^(0.4){\simeq} \ar[r]^(0.4){m_B}_(0.4){\simeq} 
\ar[u]_{\widetilde{q}} \ar[d]^{\widetilde{\varphi}}
& C^*(B; {\mathbb K}) \ar[u]_{q^*} \ar[d]^{\varphi^*} \\ 
\h{X} & TV_X  \ar[l]_(0.4){\phi_X}^(0.4){\simeq} 
\ar[r]^(0.4){m_X}_(0.4){\simeq}  & C^*(X; {\mathbb K}) ,
}
$$
in which horizontal arrows are quasi-isomorphisms. 
We call a map $q : E \to B$  $\K$-{\it formalizable} if $(q, \iota)$ is 
a relatively $\K$-formalizable pair 
for some constant map $\iota : * \to B$.

For a graded algebra $A$, let $A^+$denote the ideal 
$\oplus_{i\geq 1}A^i$. We write $QA$ for the vector space of
indecomposable elements, namely $QA=A/(A^+\cdot A^+)$. Observe that 
the vector space $QA$ is viewed as a subspace of $A$.

It follows from the proof of \cite[Theorem 1.1]{K1} that 
a pair $(q, \varphi)$ of maps between $\K$-formal spaces 
with the same target is relatively $\K$-formalizable if the two maps 
satisfy any of the following three conditions $(P_1)$, $(P_2)$ and $(P_3)$ 
concerning a map $\pi : S \to T$ respectively. 

\medskip
\noindent
$(P_1)$ $\h{S}$ and $\h{T}$ are polynomial algebras with at most countably
many generators in which the operation 
$Sq_1$ vanishes when the characteristic of the field $\K$ is 2. 
Here $Sq_1x = Sq^{n-1}x$ for $x$ with degree $n$; see \cite[4.9]{Mu}. 
\noindent

\noindent
$(P_2)$ The homomorphism $B\h{\pi} : B\h{T} \to B\h{S}$ defined by 
$\h{\pi}$ between the bar complexes induces 
an injective homomorphism on the homology. 

\noindent
$(P_3)$ $\widetilde{H}^i(S; {\mathbb K})=0$ 
for any $i$ with 
$\dim\widetilde{H}^{i-1}(\Omega T; {\mathbb K}) 
- \dim (QH^*(T;  {\mathbb K}))^i \neq 0$, where $\widetilde{H}^*(X; {\mathbb K})$ 
denotes the reduced cohomology of a space $X$.   

\medskip

The following examples show that some important maps enjoy
$\K$-formalizability. 

\begin{ex}
\label{ex:examples} 
(i) Let $G$ be a connected Lie group and $K$  a connected subgroup. 
Suppose that $H_*(G; {\mathbb Z})$ and $H_*(K; {\mathbb Z})$ are
$p$-torsion free. Then the map $Bi : BK \to BG$ between classifying
spaces induced by the
inclusion  $i : K \to G$ satisfies the condition $(P_1)$ with respect
to the field ${\mathbb F}_p$. Assume further that 
$\text{rank} \ G = \text{rank} \ K$. Let $M$ be the homogeneous space 
$G/K$ and $\text{aut}_1(M)$ the connected component of 
function space of all self-maps on $M$ containing the identity map.  
Then the universal fibration 
$\pi : M_{\text{aut}_1(M)} \to  B_{\text{aut}_1(M)}$
with fibre $M$ satisfies the condition  $(P_1)$ with respect
to the field ${\mathbb Q}$; see \cite{Hau} and \cite{K3}. \\
(ii) Let $q : E \to B$ be a map between $\K$-formal spaces with a 
section. Then $q$ satisfies the condition  $(P_2)$. This follows from
the naturality of the bar construction. \\
(iii) Consider a map $f : S^4 \to BG$ for which $G$ is a
simply-connected Lie group and $H_*(G; {\mathbb Z})$ is $p$-torsion free.
Suppose that 
$\widetilde{H}^i(S^4; {\mathbb F}_p)\neq0$, then $i=4$. 
One obtains  $\dim\widetilde{H}^{4-1}(\Omega BG; {\mathbb F}_p) 
- \dim (QH^*(BG; {\mathbb F}_p))^4 = 0$.  Thus 
the map $f : S^4 \to BG$ satisfies the condition $(P_3)$. 
\end{ex}

One of our results is described as follows. 

\begin{thm}\label{thm:main_new}
Let ${\mathcal F}$ be a pull-back diagram
$$
\xymatrix@C25pt@R20pt{
E_{\varphi} \ar[r] \ar[d] & E \ar[d]^{q} \\
X  \ar[r]^{\varphi}  & B   
}
$$
in which $q$ is a fibration and 
the pair $(q, \varphi)$ is relatively $\K$-formalizable.   
Suppose that either of the following conditions 
{\em (i)} and {\em (ii)} holds. 

{\em (i)} The cohomology $H^*(B; \K)$ is a polynomial 
algebra generated by $m$ indecomposable elements.  
Let $\Lambda$ be the subalgebra of $H^*(B; \K)$ generated by 
the vector subspace $\Gamma:=\text{\em Ker} \ \varphi^* \cap QH^*(B; \K)$. 
Then $\dim \text{\em Tor}^{\Lambda}_*(H^*(E; \K), \K) < \infty$. 

{\em (ii)} There exists a homotopy commutative diagram 
$$
\xymatrix@C25pt@R15pt{
E \ar[d]_q  \ar[r]^{\simeq} & B'\ar[d]^{\Delta}  \\
B  \ar[r]^(0.4){\simeq}_(0.4)h   & B' \times B'     
}
$$
in which horizontal arrows are homotopy equivalences and $\Delta$ is
the diagonal map. Moreover $H^*(B'; \K)$ is a polynomial 
algebra generated by $m$ indecomposable elements.  
In this case put 
$\Gamma = \text{\em Ker} \ (\Delta^* |_{QH^*(B'\times B')})
\cap \text{\em Ker}\ (h\varphi)^*$. 

\noindent
Then one has 
$$
\text{\em level}_{\text{\em D}(C^*(X; \K))}(E_{\varphi} )
\leq m-\dim \Gamma +1.
$$   
In particular, 
$\text{\em level}_{\text{\em D}(C^*(X; \K))}
(E_{\varphi})=1$ if $\varphi^*\equiv 0$.  
\end{thm}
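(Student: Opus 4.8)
The plan is to reduce everything to a statement about building $C^*(X;\K)$-modules from Koszul-type data, exploiting relative $\K$-formalizability to replace the cochain algebras by their cohomology rings. First I would use the commutative diagram defining relative $\K$-formalizability to identify $C^*(E_\varphi;\K)$, up to quasi-isomorphism of $C^*(X;\K)$-modules, with a model built from $H^*(E;\K)$, $H^*(B;\K)$ and $H^*(X;\K)$; concretely, since $q$ is a fibration the corner space $E_\varphi$ has the pull-back model $C^*(X;\K)\otimes_{C^*(B;\K)}C^*(E;\K)$, and formalizability lets us compute this as $H^*(X;\K)\otimes_{H^*(B;\K)}H^*(E;\K)$ (derived tensor), i.e. a $\mathrm{Tor}$. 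So the level of $E_\varphi$ over $C^*(X;\K)$ equals the level of $H^*(X;\K)\otimes^{L}_{H^*(B;\K)}H^*(E;\K)$ over $H^*(X;\K)$, at least up to the standard base-change comparison, and I would isolate this as the key lemma to invoke (citing \cite{K1} and the bar/Eilenberg–Moore machinery that was used there).

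Next, in case (i), $H^*(B;\K)=\K[x_1,\dots,x_m]$ is polynomial on $m$ generators. The map $\varphi^*$ kills a subspace $\Gamma\subseteq QH^*(B;\K)$ of dimension $d:=\dim\Gamma$; choose the polynomial generators so that $x_1,\dots,x_d$ span $\Gamma$ and $\Lambda=\K[x_1,\dots,x_d]$. The idea is then to split the change of rings in two stages: $H^*(B;\K)=\Lambda\otimes \K[x_{d+1},\dots,x_m]$, and $\varphi^*$ factors through the quotient $H^*(B;\K)\to \K[x_{d+1},\dots,x_m]\hookrightarrow H^*(X;\K)$ since the $x_i$ with $i\le d$ go to zero. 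The Koszul resolution of $\K$ over $\Lambda$ has length exactly $d$ (it is a $d$-fold iterated mapping cone), so $\K$ has level $\le d+1$ over $\Lambda$; tensoring up, $\Lambda/\Lambda^+$-type modules over $H^*(B;\K)$ are built in $d+1$ steps. The hypothesis $\dim\mathrm{Tor}^{\Lambda}_*(H^*(E;\K),\K)<\infty$ guarantees that $H^*(E;\K)\otimes^{L}_{\Lambda}\K$ is a \emph{finite} complex of finitely generated free $\K[x_{d+1},\dots,x_m]$-modules, hence becomes, after base change along $\K[x_{d+1},\dots,x_m]\to H^*(X;\K)$, a finite free $H^*(X;\K)$-complex — that is, an object of level $1$ over $H^*(X;\K)$. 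Assembling: $H^*(X;\K)\otimes^{L}_{H^*(B;\K)}H^*(E;\K)$ is obtained by applying the $d+1$-step Koszul thickening to a level-$1$ object, yielding level $\le d+1 = m-\dim\Gamma+1$ only if $d=m$; in general one carries the $m-d$ remaining polynomial variables through without extra cost, since over $H^*(X;\K)$ they have already been specialized, so the count stays $d+1$ — wait, the bound is $m-\dim\Gamma+1$, so I must be more careful: the $m-d$ extra generators contribute their own Koszul steps only to the extent they are not already split off by a section, which is exactly where the finiteness of $\mathrm{Tor}^\Lambda$ together with the polynomial structure forces the contribution of the complementary variables to be absorbed. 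I would track this bookkeeping via the subadditivity of level under triangles (\cite[2.4]{ABIM}) applied to the Koszul filtration of $\K$ over $H^*(B;\K)$ relative to $\Lambda$.

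For case (ii), the diagonal $\Delta:B'\to B'\times B'$ has the property that $H^*(B'\times B';\K)=H^*(B';\K)^{\otimes 2}$ is polynomial on $2m$ generators $y_i\otimes 1, 1\otimes y_i$, and $\Delta^*$ identifies the two copies; the kernel of $\Delta^*$ on indecomposables is spanned by the $m$ differences $y_i\otimes 1 - 1\otimes y_i$, so $H^*(B';\K)$ is a complete intersection quotient of $H^*(B'\times B';\K)$ cut out by a regular sequence of length $m$. Intersecting further with $\mathrm{Ker}(h\varphi)^*$ gives $\Gamma$, and the same two-stage Koszul argument as in (i) applies with the Koszul complex now of length $m-\dim\Gamma$ after accounting for the regular sequence already present; this is the standard fact that $H^*(B';\K)\otimes^{L}_{H^*(B'\times B';\K)}(-)$ costs $m+1$ steps, refined by the part of the regular sequence lying in $\mathrm{Ker}(h\varphi)^*$. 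The main obstacle in both cases is the bookkeeping in the reduction step — justifying that the level over $C^*(X;\K)$ is controlled by the level over $H^*(X;\K)$ of the derived tensor product, which requires the full strength of relative $\K$-formalizability and a base-change inequality for levels along the quasi-isomorphisms $m_X\colon TV_X\to C^*(X;\K)$ and $\phi_X\colon TV_X\to H^*(X;\K)$; once that is in hand, the Koszul length count is routine. Finally, if $\varphi^*\equiv 0$ then $\Gamma=QH^*(B;\K)$ has dimension $m$, so $m-\dim\Gamma+1=1$, and indeed $E_\varphi\simeq X\times_B E$ has $C^*(E_\varphi;\K)$ quasi-isomorphic to a free $C^*(X;\K)$-module (the fibre over a null map is a product up to homotopy), giving level exactly $1$.
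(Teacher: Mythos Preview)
Your overall strategy---reduce via relative $\K$-formalizability to the level of $H^*(E)\otimes^{\mathbb L}_{H^*(B)}H^*(X)$ in $\text{D}(H^*(X))$, and then count Koszul steps---is exactly the paper's strategy, and the reduction step is right (it is Theorem~\ref{thm:level_formal}). The gap is in the Koszul bookkeeping: you have the roles of $\Gamma$ and its complement reversed, and this is why your count comes out as $\dim\Gamma+1$ rather than $m-\dim\Gamma+1$.

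Concretely, write $H^*(B)=\K[u_1,\dots,u_m]$ with $u_{m-s+1},\dots,u_m$ spanning $\Gamma$ (so $s=\dim\Gamma$). The derived tensor product is modeled by
\[
\mathcal L=\bigl(H^*(E)\otimes E[su_1,\dots,su_m]\otimes H^*(X),\ \delta\bigr),\qquad \delta(su_i)=q^*u_i\otimes 1-1\otimes\varphi^*u_i.
\]
For $i>m-s$ one has $\varphi^*u_i=0$, so on those generators $\delta$ touches only the $H^*(E)$--factor. Thus $F^0:=H^*(E)\otimes E[su_{m-s+1},\dots,su_m]\otimes H^*(X)$ is, as a DG $H^*(X)$-module, of the form $(C,d)\otimes_\K H^*(X)$; since $H(C)=\text{Tor}^\Lambda_*(H^*(E),\K)$ is finite-dimensional by hypothesis, $F^0$ lies in $\text{\tt thick}^1(H^*(X))$. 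One then filters $\mathcal L$ by the exterior degree in the \emph{remaining} $m-s$ generators $su_1,\dots,su_{m-s}$, obtaining a length-$(m-s)$ filtration whose subquotients are sums of shifts of $F^0$. That yields the bound $m-s+1=m-\dim\Gamma+1$. The Tor hypothesis is used precisely to make the $\Gamma$-part collapse to level one, not to control the complementary variables.

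Your argument instead filters by the $\Lambda$-Koszul (length $d=s$) and asserts that the remaining object $H^*(E)\otimes^{\mathbb L}_\Lambda\K$, base-changed to $H^*(X)$, has level $1$. Two things fail there. First, the Koszul model $H^*(E)\otimes E[sx_1,\dots,sx_d]$ is a complex of $R=\K[x_{d+1},\dots,x_m]$-modules whose terms are copies of $H^*(E)$, which is not free over $R$ in general; finite-dimensionality of its cohomology does not make it free. Second, even a bounded complex of finite free $H^*(X)$-modules does not have level $1$ unless its differential vanishes. Your ``wait'' paragraph correctly flags the mismatch but does not repair it; the repair is to swap which block of variables is filtered and which is absorbed. (A small side remark: in the last sentence, $\varphi^*\equiv 0$ on cohomology does not force $\varphi$ to be null-homotopic, so the ``product up to homotopy'' justification is not valid; the conclusion $\text{level}=1$ follows directly from the inequality once $\Gamma=QH^*(B)$.)
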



We are able to characterize a space of level one with a spectral
sequence. 
\begin{prop}
\label{prop:main2}
Let ${\mathcal F}' : F \stackrel{j}{\to} E \to B$ be a fibration with B
simply-connected and $F$ connected. If  
$\text{\em level}_{\text{\em D}(C^*(B; \K))}(E)=1$,
then both the Leray-Serre spectral sequence and 
the Eilenberg-Moore spectral sequence for  ${\mathcal F}'$ collapse at
the $E_2$-term, where the coefficients of the spectral sequence are in 
the field $\K$. 
\end{prop}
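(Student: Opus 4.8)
The plan is to exploit the hypothesis $\text{level}_{\text{D}(C^*(B;\K))}(E)=1$, which by the definition of level means that $C^*(E;\K)$ lies in $\text{\tt thick}^1_{\D(C^*(B;\K))}(C^*(B;\K)) = \text{\tt smd}(\text{\tt add}^{\Sigma}(C^*(B;\K)))$. Unwinding this, $C^*(E;\K)$ is a retract in $\D(C^*(B;\K))$ of a finite direct sum of shifts $\bigoplus_i \Sigma^{n_i} C^*(B;\K)$. The first step is therefore to translate this retract statement into a concrete statement about $H^*(E;\K)$ as a module over $H^*(B;\K)$: passing to homology (which is legitimate since the field $\K$ makes everything nicely behaved and the relevant $\text{Tor}$-functors compute the morphisms in the derived category), one should conclude that $H^*(E;\K)$ is a free $H^*(B;\K)$-module — a retract of a free module over a (polynomial, here) ring being projective, hence free by gradedness and local finiteness, or at least that the Eilenberg--Moore spectral sequence collapses because $\text{Tor}^{H^*(B;\K)}_{\geq 1}$ vanishes on the relevant modules.

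Next I would run the two spectral sequences in turn. For the Eilenberg--Moore spectral sequence of ${\mathcal F}'$, whose $E_2$-term is $\text{Tor}_{H^*(B;\K)}^{*,*}(H^*(E;\K),\K)$ converging to $H^*(F;\K)$: once the level-one hypothesis has been converted into projectivity/freeness of $C^*(E;\K)$ over $C^*(B;\K)$, the higher $\text{Tor}$ groups vanish, so $E_2 = E_2^{0,*} = H^*(E;\K)\otimes_{H^*(B;\K)}\K$ is concentrated in a single column, and the spectral sequence degenerates at $E_2$ for dimension reasons (no room for differentials). For the Leray--Serre spectral sequence $E_2^{p,q} = H^p(B; H^q(F;\K)) \Rightarrow H^{p+q}(E;\K)$, the idea is to compare total dimensions: the collapse of the Eilenberg--Moore spectral sequence identifies $H^*(F;\K)$ with $H^*(E;\K)\otimes_{H^*(B;\K)}\K$, and freeness of $H^*(E;\K)$ over $H^*(B;\K)$ gives $H^*(E;\K)\cong H^*(B;\K)\otimes (H^*(E;\K)\otimes_{H^*(B;\K)}\K) = H^*(B;\K)\otimes H^*(F;\K)$ as graded vector spaces (since $B$ is simply-connected the local coefficients are trivial). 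A dimension count, say via Poincaré series in each total degree, then forces $\dim E_2 = \dim E_\infty$ for the Leray--Serre spectral sequence, whence all differentials vanish and it too collapses at $E_2$.

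The main obstacle I anticipate is the very first step: rigorously extracting module-theoretic freeness of $H^*(E;\K)$ from the statement that $C^*(E;\K)$ is merely a \emph{retract} in the derived category of a sum of shifts of $C^*(B;\K)$, rather than literally such a sum. One must be careful that a retract of a semi-free (or free) DG module need not itself be free on the nose — it is only \emph{projective} in $\D(C^*(B;\K))$ — so the argument needs either the local finiteness hypothesis together with the graded structure of the polynomial ring $H^*(B;\K)$ to upgrade projective to free, or, better, a direct argument that retracts suffice for the vanishing of $\text{Tor}^{H^*(B;\K)}_{\geq 1}(H^*(E;\K),\K)$, since $\text{Tor}$ is additive and kills the complementary summand. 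Once that vanishing is in hand, both spectral sequence collapses follow by the standard comparison/dimension arguments sketched above; there one must also invoke simple-connectivity of $B$ to rule out local coefficient subtleties and to guarantee the Eilenberg--Moore spectral sequence converges strongly.
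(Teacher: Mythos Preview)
Your proposal is correct and follows the same overall strategy as the paper: from level one you extract that $C^*(E)$ is a retract in $\D(C^*(B))$ of a finite sum of shifts of $C^*(B)$, whence $H^*(E)$ is a projective $H^*(B)$-module, so $\text{Tor}^{H^*(B)}_{-l}(H^*(E),\K)=0$ for $l>0$, and the Eilenberg--Moore spectral sequence collapses at $E_2$. The only divergence is in the Leray--Serre step. The paper observes that the composite
\[
H^*(E)\twoheadrightarrow \text{Tor}_0^{H^*(B)}(H^*(E),\K)\xrightarrow{\ edge\ } H^*(F)
\]
equals $j^*$, and that the edge map is an isomorphism by the Tor-vanishing just established; hence $j^*$ is surjective, which forces the Leray--Serre spectral sequence to collapse. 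You instead upgrade projectivity of $H^*(E)$ to freeness (legitimate over the connected graded ring $H^*(B)$, though note $H^*(B)$ is \emph{not} assumed polynomial in this proposition, contrary to your parenthetical) and then run a Poincar\'e-series count comparing $\dim E_2$ with $\dim E_\infty$. Both routes are standard and short; the paper's edge-homomorphism argument has the minor advantage of bypassing the projective-to-free upgrade altogether.
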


\begin{rem}
\label{rem:level} 
Let $G$ be a simply-connected Lie group.  As mentioned above, 
with the aid of Auslander-Reiten theory over spaces by 
J{\o}rgensen \cite{J}\cite{J2}\cite{J3}, we have determined  
the level $L:=\text{level}_{\D(C^*(S^4; \K))}(E_\varphi)$ 
for the total space of the $G$-bundle over $S^4$ with the
classifying map $\varphi : S^4 \to BG$ provided $H^*(BG; \K)$ is 
a polynomial algebra on generators with even degree. 
The result \cite[Proposition 2.4]{K2} asserts that $L=2$ if
$\varphi^*\neq 0$ and $L=1$ otherwise. 
Though the computations in \cite{K2} are ad hoc, the result is not
 accidental since it is deduced from 
Theorem \ref{thm:main_new} and Proposition \ref{prop:main2}. 

In fact, let $p : EG \to BG$ be the universal bundle.  
The maps $\varphi$ and $p$ satisfy the condition $(P_3)$, 
respectively so that the pair 
$(\varphi, p)$ is relatively $\K$-formalizable; 
see Example \ref{ex:examples}.
Since $EG$ is contractible, the condition (i) in Theorem \ref{thm:main_new}
holds. Thanks to the theorem, we have $L\leq 2$ if $\varphi^*\neq 0$ 
and $L=1$ otherwise because $\dim \Gamma= \dim QH^*(BG)-1$ 
if $\varphi^*\neq 0$. 

Suppose that $\varphi^*\neq 0$. In the Leray-Serre spectral sequence 
$\{E_r^{*,*}, d_r\}$ for the universal bundle $G \to EG \to BG$, 
the indecomposable elements of 
$H^*(G; \K)\cong E_2^{0, *}$ are chosen as transgressive ones. 
Since  $\varphi^*\neq 0$, it follows that the Leray-Serre 
spectral sequence for the fibration $G \to E_\varphi \to S^4$ does not
collapse at the $E_2$-term. Proposition \ref{prop:main2} implies that
$L\neq 1$. We have $L=2$.  
\end{rem}

Let us mention that the original proof of \cite[Proposition 2.4]{K2}
enables us to obtain the indecomposable objects in $\D(C^*(S^4))$
which construct the DG module $C^*(E_\varphi)$ over $C^*(S^4)$. 
As mentioned in the introduction, 
such objects are called {\it molecules} because they are viewed as 
structural ones smaller than cellular cochains; see 
\cite[Section 2, Example 6.3]{K2}.

In general, taking shifts and direct sums of objects with the same level
leave the invariant unchanged. From this fact one deduces 
the following noteworthy result which states that 
the cochain type level of a Borel construction 
associated with Lie groups coincides 
with that of the construction with their maximal tori.

\begin{thm}
\label{thm:Borel}
Let $G$ be a connected Lie group, $T_H$ and
$T_K$ maximal tori of subgroups $H$ and $K$ of $G$, respectively.  
Suppose that $H^*(BG; \K)$, $H^*(BH; \K)$ and $H^*(BK; \K)$ are 
polynomial algebras with generators of even dimensions. 
Then 
$$
\text{\em level}_{\text{\em D}(C^*(BH; \K))}(EG\times_HG/K)
=\text{\em level}_{\text{\em D}(C^*(BT_H; \K))}
(EG\times_{T_H}G/T_K).
$$
\end{thm}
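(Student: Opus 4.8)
The plan is to reduce both sides to a common value by exploiting the behaviour of the cochain type level under the two standard kinds of manipulation that relate a compact Lie group to its maximal torus: the ``restriction to the torus'' map on classifying spaces, and the identification of the Borel construction as a total space of a bundle. The key input is that, since all the cohomologies in sight are polynomial on even generators, the fibration $BT_H \to BH$ has fibre $H/T_H$ whose rational (and indeed $\K$-) cohomology is finite dimensional, and $H^*(BT_H;\K)$ is a finitely generated free module over $H^*(BH;\K)$. Consequently $C^*(BT_H;\K)$ is a compact object (a finitely built, in fact a ``small'', object) in $\D(C^*(BH;\K))$, and dually $C^*(BH;\K)$ is a retract, up to the appropriate twist, of a finitely built object over $C^*(BT_H;\K)$. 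I would make this precise using the base-change/restriction of scalars adjunction between $\D(C^*(BH;\K))$ and $\D(C^*(BT_H;\K))$ together with the Eilenberg--Moore description of $C^*(EG\times_H G/K;\K)$ as $C^*(BH;\K)\otimes^{\mathbb L}_{C^*(BG;\K)} C^*(BG/K \text{-data};\K)$ — more cleanly, via the pull-back square exhibiting $EG\times_H G/K$ as the corner of the square with $EG\times_K G/K \to BK$ over $\varphi : BH \to BG$ and its torus analogue.

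First I would set up the two pull-back squares: the one over $BG$ whose corner is $EG\times_H G/K$, and the one over $BG$ (replacing $H$ by $T_H$, $K$ by $T_K$) whose corner is $EG\times_{T_H} G/T_K$; both pairs are relatively $\K$-formalizable by $(P_1)$ of Example~\ref{ex:examples}(i), since the relevant maps $Bi$ on classifying spaces satisfy $(P_1)$ over $\K$. Then I would invoke the general principle, recorded in the paragraph preceding the theorem, that shifts and finite direct sums do not change the level, together with the stronger fact (from \cite{ABIM}, Section 3) that if $N$ is finitely built from $M$ and $M$ from $N$ in a triangulated category then $\text{level}(N) < \infty$ iff $\text{level}(M) < \infty$, and more usefully that $\text{level}_{\D(A)}(-)$ is insensitive to replacing $A$ by a DGA $A'$ together with $M$ by $M'$ when one has a base-change situation in which $A'$ is built from $A$ with finitely many steps and vice versa. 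The content is that $C^*(BT_H;\K)$ and $C^*(BH;\K)$ build each other (as modules over $C^*(BH;\K)$, and symmetrically after restriction) in a bounded number of steps, because $H^*(BT_H;\K)$ is free of finite rank over $H^*(BH;\K)$ and $H^*(BH;\K)$ is a summand of it via the transfer; formality of all these spaces, which holds since they are rationally formal and the $(P_1)$ condition forces formality with $\K$-coefficients, lets one transport this from cohomology algebras to cochain DGAs.

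The concrete mechanism I would use to compare the two levels is: (a) restriction of scalars along $C^*(Bi):C^*(BG;\K)\to C^*(BT_H;\K)$ — actually along $C^*(BH;\K)\to C^*(BT_H;\K)$ — does not increase the level when the extension is ``small'', and extension of scalars does not increase it either; (b) the Borel construction $EG\times_{T_H} G/T_K$ is the pull-back of $EG\times_{T_H} G/T_K \to BT_H$ — no, rather, one checks directly that under $C^*(BT_H;\K)$-linear equivalences $C^*(EG\times_{T_H} G/T_K;\K)$ corresponds to $C^*(BH;\K)\otimes_{\cdots} C^*(EG\times_H G/K;\K)$ up to a finite free twist, using the two fibrations $G/T_K \to G/K$ with fibre $K/T_K$ and $BT_H\to BH$ with fibre $H/T_H$, both with finite-dimensional fibre cohomology. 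Matching these up, and using that a finite free extension and restriction leaves the level unchanged, yields equality of the two levels. I expect the main obstacle to be exactly step (b): making the identification of the two cochain DG modules \emph{over different DGAs} precise enough that the level really is preserved — one must track the change-of-rings carefully (the level is measured in $\D(C^*(BH;\K))$ on the left and in $\D(C^*(BT_H;\K))$ on the right, so the statement is genuinely a statement about two different triangulated categories), and verify that the building in both directions is finite, which is where the polynomial-on-even-generators hypothesis and formality are essential. Once that identification is in hand, the equality of levels is formal.
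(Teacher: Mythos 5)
Your overall strategy is the same as the paper's: pass to the two pull-back squares over $BG$, use relative $\K$-formalizability to replace cochains by cohomology (Theorem \ref{thm:level_formal}), and then play extension of scalars against restriction of scalars along $H^*(BH;\K)\to H^*(BT_H;\K)$ to get the two inequalities. Your mechanism (a) is exactly the paper's pair of lemmas: an exact functor $-\otimes^{\mathbb L}_{H^*(BH)}H^*(BT_H)$ does not increase level \cite[Lemma 3.4(1)]{ABIM}, and restriction along a map $\psi:A\to B$ with $\psi_*B$ a finite direct sum of shifts of $A$ does not increase level (Lemma \ref{lem:key_Borel}).

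The step you flag as the main obstacle — your (b), identifying the two DG modules over the two different DGAs up to a ``finite free twist'' — is where your write-up stops short, and it is closed in the paper by a single concrete input: Baum's theorem, which gives $H^*(BT_H;\K)\cong H^*(BH;\K)\otimes H^*(H/T_H;\K)$ as $H^*(BH;\K)$-modules and $H^*(BT_K;\K)\cong H^*(BK;\K)\otimes H^*(K/T_K;\K)$ as $H^*(BK;\K)$-modules, with both isomorphisms compatible with the $H^*(BG;\K)$-module structures. Feeding these into $H^*(BT_K)\otimes^{\mathbb L}_{H^*(BG)}H^*(BT_H)$ and using that finite direct sums of shifts leave the level unchanged \cite[Lemma 2.4]{ABIM}, one finds that this object, viewed over $H^*(BH)$, has the same level as $H^*(BK)\otimes^{\mathbb L}_{H^*(BG)}H^*(BH)$, i.e.\ the left-hand side of the theorem; the two change-of-scalars functors then sandwich it against the level over $H^*(BT_H)$, which is the right-hand side. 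So the twist you anticipate is not something that needs to be ``tracked carefully'' through the triangulated categories — it is absorbed entirely by the direct-sum invariance of the level once Baum's splitting is quoted. The appeal to a transfer argument to exhibit $H^*(BH)$ as a summand of $H^*(BT_H)$ is unnecessary for the same reason: the free-module decomposition already gives both directions. With that one citation supplied, your argument matches the paper's proof.
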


In the rest of this section, we focus on the chain type levels of
spaces. 

Let $\mathcal{DGM}$ be the category of supplemented differential 
graded modules over $\K$; that is, an object $M$ is of the form 
$M=\K\oplus \overline{M}$ where $d\K = 0$ and 
$d(\overline{M}) \subset \overline{M}$.  
Let $A$ be a monoid object in $\mathcal{DGM}$, namely a differential
graded algebra. We denote by $A^\natural$ the underlying graded algebra
of $A$. 


In \cite{Kahl}, Kahl introduced three notions of algebraic 
approximations of the L.S.-category of a map as numerical 
invariants in monoidal cofibration categories; see also \cite{Kahl2}.   
We here confine ourselves to treating such notions in 
$\mathcal{DGM}\text{-}A$,  the category of supplemented
differential graded right $A$-modules. 
Then the chain type level of a space is related to the E-category, which is one of the approximations. 
In order to describe the result, 
we first recall the definition of the E-category of an object in 
$\mathcal{DGM}\text{-}A$.  

Let $B(\K, A, A) \to \K  \to 0$ be the bar resolution of $\K$ as a right 
$A$-module. Observe that $B(\K, A, A) = T(\Sigma\overline{A})\otimes A$ 
as a $A^\natural$-module, where $\overline A$ 
is the augmentation ideal of $A$, 
$(\Sigma\overline{A})_n = \overline{A}_{n-1}$ and 
$T(W)$ denotes the tensor coalgebra generated by a
vector space  $W$. Define a sub $A$-module $E_nA$ of $B(\K, A, A)$  
by $E_nA= T(\Sigma\overline{A})^{\leq n}\otimes A$. 

\begin{defn} \cite{Kahl}
The {\it E-category} for $M$ in $\mathcal{DGM}\text{-}A$, denoted 
$\text{Ecat}_AM$, is the least integer $n$ for which there exists a
morphism 
$M \to E_nA$ in the homotopy category of $\mathcal{DGM}\text{-}A$. 
If there is no such integer, then we set  $\text{Ecat}_AM=\infty$. 
\end{defn}

Let $R$ be a graded algebra over $\K$ and $M$ a graded module over
$R$. Then the {\it grade} of $M$, denoted $\text{grade}_RM$, is defined
to be the least integer $k$ such that $\text{Ext}_R^k(M, R)\neq 0$. 
If $\text{Ext}_R^*(M, R)= 0$, then we set 
$\text{grade}_RM=\infty$. The {\em projective dimension} of $M$, denoted 
$\text{pd}_RM$, is defined to be the least integer $k$ such that $M$
admits a projective resolution of the form 
$0 \to P_k \to P_{k-1} \to \cdots \to P_0 \to M \to 0$.  
We set $\text{pd}_RM=\infty$ if no such integer exists. By definition, it is immediate that 
$\text{grade}_RM \leq \text{pd}_RM$. 


The grade and the projective dimension are numerical invariants which
appear in homological algebra. The E-category is an
invariant described, in general, in terms of homotopical algebra; see
\cite[Definition 2.1]{Kahl}.  The level is a numerical invariant defined
in a triangulated category as is seen above. 
These invariants and the L.-S. category of a map meet with 
inequalities in the following theorem and the ensuing remark. 

\begin{thm}
\label{thm:cat-level} Let $f : X \to Y$ be a map from a connected space to a
simply-connected space. Then one has 
$$
\text{\em grade}_{H_*(\Omega Y)}H_*(F_f) \! \leq \! 
\text{\em Ecat}_{C_*(\Omega Y)}C_*(F_f) \! \leq \!
\text{\em level}_{\text{\em D}(C_*(\Omega Y))}(F_f) - 1 \! \leq \!
\dim H^*(X)-1.  
$$
Assume further that 
$\dim \text{\em Tor}_{-i}^{H_*(\Omega Y)}(H_*(F_f), \K) < \infty$ for any 
$i \geq 0$. Then 
$$
\text{\em level}_{\text{\em D}(C_*(\Omega Y))}(F_f) - 1 \leq 
\text{\em pd}_{H_*(\Omega Y)}H_*(F_f).   
$$
\end{thm}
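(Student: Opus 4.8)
The plan is to establish the chain of inequalities from left to right, treating each comparison separately, and then the final projective-dimension bound under the Tor-finiteness hypothesis. First I would handle the leftmost inequality $\text{grade}_{H_*(\Omega Y)}H_*(F_f) \leq \text{Ecat}_{C_*(\Omega Y)}C_*(F_f)$. The natural approach is to interpret $\text{Ecat}$ as the existence of a lift of $C_*(F_f)$ through the $n$th stage $E_n(C_*(\Omega Y))$ of the bar resolution of $\K$, and to observe that applying $\text{Ext}^*_{H_*(\Omega Y)}(-, H_*(\Omega Y))$ (or the appropriate derived-category mapping object) to such a filtration kills $\text{Ext}$ in degrees below $n$; this is the standard ``Ecat bounds grade'' argument of Kahl adapted to the DG chain algebra setting, passing to homology via the Eilenberg–Moore / bar spectral sequence so that $H_*(\Omega Y)$-module structure on $H_*(F_f)$ is the one that enters. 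I would cite \cite{Kahl} for the module-category version and note that the comparison is purely formal once one has the bar filtration.

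Next, the middle inequality $\text{Ecat}_{C_*(\Omega Y)}C_*(F_f) \leq \text{level}_{\D(C_*(\Omega Y))}(F_f) - 1$. Here the key point is that the stages $E_n A$ of the bar resolution are semifree $A$-modules built from $n$ ``cells'' in the sense that $E_nA \in \text{\tt thick}^{n+1}_{\D(A)}(A)$ — each successive quotient $E_nA/E_{n-1}A \cong (\Sigma\overline A)^{\otimes n}\otimes A$ is a (possibly infinite) sum of shifts of $A$, so the filtration $0 = E_{-1}A \subset E_0A \subset \cdots \subset E_nA$ gives $E_nA$ in $n+1$ thickenings. Conversely, if $\text{level}_{\D(A)}(M) \leq \ell$, i.e.\ $M \in \text{\tt thick}^{\ell}_{\D(A)}(A)$, then an induction on the number of triangles shows $M$ admits a map to some $E_{\ell-1}A$ in the homotopy category, because $B(\K,A,A)$ is a resolution and the obstruction to lifting a finite cellular approximation lands in the relevant $\text{Ext}$ groups which vanish on the resolution's stages; this is essentially the argument relating the ``E-cone length'' to module cell-structure, and I would phrase it so that a thickening of length $\ell$ produces a factorization through $E_{\ell-1}A$, hence $\text{Ecat} \leq \ell - 1$. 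This is the step I expect to be the main obstacle: one must be careful that $\text{\tt thick}$ allows arbitrary (not just finite) direct sums and retracts, whereas the $E_nA$ are not compact, so the lifting argument has to be set up at the level of the derived category of all DG modules, and one needs the homotopy category of $\mathcal{DGM}\text{-}A$ to compute the same mapping sets as $\D(A)$ for these particular objects.

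For the rightmost inequality $\text{level}_{\D(C_*(\Omega Y))}(F_f) - 1 \leq \dim H^*(X) - 1$, equivalently $\text{level} \leq \dim H^*(X)$, I would use that $F_f$ is the homotopy fibre of $f : X \to Y$, so there is an Eilenberg–Moore-type model expressing $C_*(F_f)$ as $C_*(\Omega Y) \otimes$ (something built from the cochains/chains of $X$), concretely a semifree resolution whose generating space has total dimension $\dim H^*(X)$ over $\K$ after minimalizing; building $C_*(F_f)$ from $C_*(\Omega Y)$ by attaching cells indexed by a basis of $H^*(X)$ then puts it in $\text{\tt thick}^{\dim H^*(X)}$. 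Finally, for the projective-dimension bound under $\dim\text{Tor}_{-i}^{H_*(\Omega Y)}(H_*(F_f),\K) < \infty$: the hypothesis guarantees that a minimal semifree model of $C_*(F_f)$ over $C_*(\Omega Y)$ has finitely many generators in each homological degree, and the Tor-finiteness forces the length of the minimal resolution of $H_*(F_f)$ over $H_*(\Omega Y)$ to control the number of thickenings needed — a minimal projective resolution of length $\text{pd}$ lifts to a semifree filtration of $C_*(F_f)$ with $\text{pd}+1$ stages, giving $\text{level} \leq \text{pd}_{H_*(\Omega Y)}H_*(F_f) + 1$. I would invoke the standard comparison between minimal semifree resolutions over a DG algebra and minimal resolutions over its homology (a spectral sequence / minimal model argument, cf.\ the techniques in \cite{ABIM}) to make this precise, with the finiteness hypothesis ensuring the spectral sequence behaves and no infinite ``spreading out'' of cells across homological degrees occurs.
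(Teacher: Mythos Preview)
Your overall structure matches the paper's, and your treatments of the third inequality (via an Eilenberg--Moore identification $H(C_*(F_f)\otimes_{C_*(\Omega Y)}^{\mathbb L}\K)\cong H_*(X)$; the paper cites \cite[Lemma 6.5]{Schmidt} for the bound $\text{level}\leq\dim H(M\otimes_A^{\mathbb L}\K)$) and of the projective-dimension bound (lifting a minimal resolution to a semi-free filtration; the paper does this as Lemma \ref{lem:app1} via \cite{G-M}) are essentially the same as the paper's.

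The second inequality, however, is where your plan diverges and has a real gap. The paper does \emph{not} try to build a map $M\to E_{\ell-1}A$ directly from a thick filtration by obstruction theory. Instead it passes through Kahl's \emph{trivial category} $\text{trivcat}_A$: from $\text{level}=\ell$ one gets (Theorem \ref{thm:ABIM-I}) that $M$ is a retract in $\D(A)$ of some $M'$ with a finite semi-free filtration $F^0\subset\cdots\subset F^{\ell-1}=M'$, and the technical heart (Lemma \ref{lem:level-trivcat}) is to convert this into a sequence of \emph{elementary cofibrations} in $\mathcal{DGM}\text{-}A$, which requires carefully adjoining a copy of $A$ to the intermediate $F^i$ that fail to be supplemented. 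This gives $\text{trivcat}_A M'\leq \ell-1$; then Kahl's inequality $\text{trivcat}\geq\text{Ecat}$ and the retraction $M\to M'$ finish. Your proposed induction on triangles to lift through $E_{\ell-1}A$ is not made precise, and the specific difficulty you flag is based on a misconception: in this paper $\text{\tt add}^\Sigma$ is closed under \emph{finite} direct sums only, so objects in $\text{\tt thick}^\ell(A)$ are compact, while $E_nA$ is not --- the issue is the opposite of what you state, and this is exactly why one cannot simply argue that $E_nA$ lies in $\text{\tt thick}^{n+1}(A)$. The supplemented-module bookkeeping and the passage through $\text{trivcat}$ are the missing ideas.

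For the first inequality your sketch (``Ecat bounds grade'' via bar filtration and $\text{Ext}$) is plausible but differs from the paper's route: the paper takes the map $u:M\to E_nA$ furnished by $\text{Ecat}=n$, composes with the Alexander--Whitney diagonal to get $\psi:(R,d)\to M\otimes T^{\leq n}(\Sigma\overline A)\otimes A$ on an Eilenberg--Moore resolution, and then explicitly invokes Step~4 of the proof of \cite[Theorem 35.9]{F-H-T}. Your spectral-sequence formulation would need to be tightened to reach the same conclusion.
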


\begin{rem} Let $f : X \to Y$ be a map from a connected space to a
simply-connected space. Then it follows from 
\cite[Theorems 2.7 and 3.5]{Kahl} that the E-category 
$\text{Ecat}_{C_*(\Omega Y)}C_*(F_f)$ is less than or equal to 
the L.-S. category of $f$. 
Indeed, the result \cite[Theorem 35.9]{F-H-T} 
due to F\'elix, Halperin and Thomas asserts  that 
$\text{grade}_{H_*(\Omega Y)}H_*(F_f) \leq \text{cat} f$ without assuming that 
$Y$ is simply-connected. Moreover the latter half of the result implies
that, if $\text{cat}f = \text{grade}_{H_*(\Omega Y)}H_*(F_f)$, then 
the value coincides with $\text{pd}_{H_*(\Omega Y)}H_*(F_f)$. This
yields that 
$$
\text{cat}f = \text{grade}_{H_*(\Omega Y)}H_*(F_f)=
\text{level}_{\text{D}(C_*(\Omega Y))}(F_f) - 1=
\text{pd}_{H_*(\Omega Y)}H_*(F_f)
$$
provided $\text{cat}f = \text{grade}_{H_*(\Omega Y)}H_*(F_f)$ and 
$\text{Tor}_{-i}^{H_*(\Omega Y)}(H_*(F_f), \K)$ is of finite
dimension for any $i$. 
\end{rem}

The result \cite[Theorem 8.3]{Kahl} enables us to conclude 
that the E-category coincides with the 
M-category of a map $f : X \to Y$ between simply-connected spaces 
in the sense of Halperin and Lemaire \cite{H-L} and Idrissi \cite{Id}: 
$\text{Ecat}_{C_*(\Omega X)}C_*(F_f)=\text{Mcat}f$. 
Thus Theorem \ref{thm:cat-level} 
gives upper bounds of the M-category.

With the aid of the fascinating theorem due to Hess \cite{Hess},  
we moreover have a remarkable result on the L.-S. category of a rational space. 

\begin{cor} 
\label{cor:R-case}
Let $X$ be a simply-connected rational space. Then 
$$
\text{\em grade}_{H_*(\Omega X; \Q)}\Q \leq 
\text{\em cat} X 
\leq 
\text{\em level}_{\text{\em D}(C_*(\Omega X; \Q))}\Q - 1  
\leq \dim H^*(X; \Q)-1. $$
\end{cor}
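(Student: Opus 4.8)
The plan is to deduce the corollary from Theorem \ref{thm:cat-level} by taking $f$ to be the constant (or identity-to-a-point) map on $X$, so that the homotopy fibre $F_f$ becomes $X$ itself up to weak equivalence, and then to identify the three invariants appearing in the theorem with their counterparts in the statement. Concretely, I would apply Theorem \ref{thm:cat-level} to the map $f\colon X\to *$; but since the target must be simply-connected, it is cleaner to take $f\colon * \to X$ (a basepoint inclusion), whose homotopy fibre is $\Omega X$ — so instead I would take $f$ to be $\mathrm{id}_X\colon X\to X$, whose homotopy fibre $F_f$ is contractible, which is not what we want either. The correct choice is the map $f\colon X\to X$ replaced by a fibrant model, or rather: apply the theorem with $Y=X$ and $f\colon \mathrm{pt}\to X$... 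Let me reconsider. The homotopy fibre of a \emph{constant} map $c\colon Y\to Y$ is $\Omega Y$; the homotopy fibre of the identity is a point. What gives $F_f\simeq *$ with the right module structure is: apply Theorem \ref{thm:cat-level} to $f = \mathrm{id}_X$, so $F_f \simeq *$ and $C_*(F_f;\Q)\simeq \Q$ as a $C_*(\Omega X;\Q)$-module, while $\dim H^*(X;\Q)$ on the right-hand bound uses the \emph{source} $X$ of $f$. Then the chain of inequalities in Theorem \ref{thm:cat-level} reads
$$
\mathrm{grade}_{H_*(\Omega X;\Q)}\Q \leq \mathrm{Ecat}_{C_*(\Omega X;\Q)}\Q \leq \mathrm{level}_{\mathrm{D}(C_*(\Omega X;\Q))}\Q - 1 \leq \dim H^*(X;\Q)-1,
$$
and it remains only to replace $\mathrm{Ecat}_{C_*(\Omega X;\Q)}\Q$ by $\mathrm{cat}\,X$.

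The second step is the identification $\mathrm{Ecat}_{C_*(\Omega X;\Q)}\Q = \mathrm{cat}\,X$ for a simply-connected rational space $X$. This is where I would invoke Hess's theorem, referenced as \cite{Hess} in the sentence preceding the corollary: Hess's theorem (the rational "mild" formulation of the Ganea/Toomer characterisation, i.e. the resolution of Ganea's conjecture in the rational setting, or more precisely the statement that $\mathrm{cat}\,X$ equals the least $n$ such that $\Q$ retracts off $E_n(C_*(\Omega X;\Q))$ in the homotopy category of $C_*(\Omega X;\Q)$-modules) says exactly that $\mathrm{Mcat}\,X = \mathrm{cat}\,X$ rationally, and combined with the remark after Theorem \ref{thm:cat-level} that $\mathrm{Ecat}_{C_*(\Omega X)}C_*(F_f) = \mathrm{Mcat}\,f$, applied to $f=\mathrm{id}_X$ (for which $\mathrm{Mcat}\,f = \mathrm{Mcat}\,X$), yields $\mathrm{Ecat}_{C_*(\Omega X;\Q)}\Q = \mathrm{Mcat}\,X = \mathrm{cat}\,X$. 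Here I would be careful that $\mathrm{Mcat}$ of the identity map agrees with the absolute $\mathrm{Mcat}$ of the space, which is essentially definitional.

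The main obstacle — really the only substantive point — is checking that the module-theoretic hypotheses of Theorem \ref{thm:cat-level} and of Hess's theorem are genuinely met for $f = \mathrm{id}_X$: namely that $C_*(F_f;\Q)$ carries the correct $C_*(\Omega X;\Q)$-module structure so that it is quasi-isomorphic to $\Q$ with its canonical (augmentation) module structure, and that $X$ being a rational space (so that $C_*(\Omega X;\Q)$ has the relevant finiteness/connectivity properties) is exactly the input Hess requires. One also has to note that the last inequality $\mathrm{level} - 1 \leq \dim H^*(X;\Q) - 1$ comes directly from the corresponding clause of Theorem \ref{thm:cat-level} with the source space being $X$; no separate argument is needed. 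Finally I would observe that $\mathrm{grade}_{H_*(\Omega X;\Q)}\Q \leq \mathrm{cat}\,X$ is itself already contained in the remark after Theorem \ref{thm:cat-level} (the F\'elix–Halperin–Thomas bound $\mathrm{grade}_{H_*(\Omega Y)}H_*(F_f)\leq \mathrm{cat}\,f$), so the left-hand inequality needs no new work either. Thus the whole corollary is a specialisation of Theorem \ref{thm:cat-level} together with the single extra ingredient $\mathrm{Ecat}_{C_*(\Omega X;\Q)}\Q = \mathrm{cat}\,X$ supplied by Hess.
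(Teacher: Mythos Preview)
Your proposal is correct and follows essentially the same approach as the paper: specialise Theorem \ref{thm:cat-level} to $f=\mathrm{id}_X$ so that $F_f\simeq *$ and $C_*(F_f;\Q)\simeq \Q$, then replace $\mathrm{Ecat}_{C_*(\Omega X;\Q)}\Q$ by $\mathrm{cat}\,X$ using Kahl's identification $\mathrm{Ecat}=\mathrm{Mcat}$ together with Hess's theorem (the paper also invokes \cite[Theorem 3.3(ii)]{H-L} at this step). The initial detour through other choices of $f$ is unnecessary but harmless, and your final choice is the one the paper implicitly uses.
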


Thanks to Theorem \ref{thm:cat-level} 
and Corollary \ref{cor:R-case}, computational examples of 
chain type levels can be  obtained; see Examples \ref{ex:5.1} and \ref{ex:5.2}.

An outline for the rest of the article is as follows. In the third section, 
after fixing notations and terminology for this article, 
we recall fundamental properties of the level of  DG-modules. 
In Section 4, we prove
Theorem \ref{thm:main_new} and Proposition \ref{prop:main2}.  
A corollary and a variant of Theorem \ref{thm:main_new} are also
established. 
In Section 5, by means of our results and general
theory for levels developed in \cite{ABIM}, 
we consider the numerical invariant for path spaces, 
biquotient spaces \cite{JHE}\cite{Sin}  
and Davis-Januszkiewicz spaces \cite{DJ}\cite{Pa} 
which appear in toric topology. Theorem \ref{thm:Borel} is proved in
this section. Section 6 is devoted to proving Theorem \ref{thm:cat-level} 
and Corollary \ref{cor:R-case}. 
We consider other lower and upper estimates for the level 
in Section 7. 

\section{Preliminaries}

Let $\K$ be a field of arbitrary characteristic. 
A {\it graded module} is a family $M =\{M^i\}_{i \in {\mathbb Z}}$ of $\K$-modules and a {\it differential} in $M$ is a linear map 
$d_M : M^i \to M^{i+1}$ of degree $+1$ such that $d^2=0$. We use the notation 
$M_i = M^{-i}$ to write $M=\{M_i\}_{i \in {\mathbb Z}}$. 
Following \cite[Appendix]{FHT_GSpace}, we moreover use convention and terminology in differential homological algebra; 
see also \cite[Section 1]{FHT} and \cite[Appendix]{H-L}.  

We here recall from \cite[Part III, 1]{K-M} the mapping cone construction. Let $I$ denote the unit interval $\K$-module; that is, 
it is free on generators $[0], [1] \in I^0$ and $[I] \in I^{-1}$ with $d([I])=[0]-[1]$. 
Let $A$ be a differential graded algebra (DG algebra) with the
underlying graded algebra 
$A^\natural$ and 
$X$ a differential graded right $A$-module (DG module). The {\it cone} $CX$ is defined to be the quotient module 
$(I/\K\{[1]\})\otimes X$. We define the {\it suspension} $\Sigma X$ by 
$\Sigma X = (I/\partial I)\otimes X$, where $\partial I$ denotes the DG submodule of $I$ generated by $[0]$ and $[1]$. 
Observe that $(\Sigma M)^n \cong M^{n+1}$.  We then have the mapping cone construction $(C_f, d)$ which is defined by 
$C_f = Y \oplus \Sigma X$ and 
$d= \left(
\begin{array}{cc}
d_Y & f \\
0 & -d_X
\end{array}
\right).
$
Observe that, by definition, triangles in $\D(A)$ come from the
sequences of the form 
$X \stackrel{f}{\to}  Y  \to C_f \to \Sigma X$
via the localization functor from the category of differential graded
right $A$-modules to the derived category $\D(A)$. 

Let $f : X \to Y$ be a morphism of DG $A$-modules. Consider the pushout diagram 
$$
\xymatrix@C25pt@R15pt{
X \ar[r]^f \ar[d]_i & Y \ar[d] \\
CX \ar[r] & Y\cup_fCX &
}
$$ in which $i : X \to CX$ denotes the natural inclusion and by definition 
$Y\cup_fCX= Y\oplus CX/((0, [0]\otimes x) - (f(x), 0) ; x \in X)$.
Define a map $\gamma : Y\oplus CX \to C_f$ 
by $\gamma(0, [I]\otimes x) =(0, x)$, $\gamma(y, 0)=(y, 0)$ 
and $\gamma(0, [0]\otimes x) =(f(x), 0)$ for $x \in X$ and $y \in Y$. 
It follows that $\gamma$ gives rise to an isomorphism 
$\overline{\gamma} : Y\cup_fCX \to C_f$ of differential graded right $A$-modules.  

Let $F$ be a DG-module over $A$ and $F'$ a sub DG-module of $F$ such that 
the quotient $F/F'$ is isomorphic to a coproduct of shifts of $A$, say 
$$
F/F' \cong \bigoplus_{i^\in J} \Sigma^{l_i}A \cong \Sigma (Z\otimes A), 
$$ 
where $Z$ denotes a graded vector space $\oplus_{i\in J} \Sigma^{l_i-1}\K$. 
Then $F$ is isomorphic to a right $A^\natural$-module of the form $F'\oplus \Sigma(Z\otimes A)$. 
It follows that $F$ fits in the pushout diagram 
$$
\xymatrix@C25pt@R15pt{
Z\otimes A \ar[r]^\xi \ar[d]_i & F' \ar[d] \\
C(Z\otimes A) \ar[r] & F'\cup_\xi C(Z\otimes A)\cong F &
}
$$ in which $\xi$ is a morphism of DG-modules over $A$ defined by 
$$
\xi(z\otimes a) = d(\Sigma(z\otimes a))-(-1)^{\deg \Sigma z}\Sigma z \otimes da
= d(\Sigma(z\otimes 1))a
$$
for $z\otimes a \in Z\otimes A$. 

We recall results concerning the level, which are used frequently in the rest of this paper. 
The first one is useful when considering the cochain type levels 
of spaces over a $\K$-formal space. 

\begin{thm}
\label{thm:level_formal} 
\cite[Theorem 1.3]{K2}
Let ${\mathcal F}$ be a fibre square as in Theorem \ref{thm:main_new} 
for which $(q, \varphi)$ is relatively $\K$-formalizable.  Then one has 
$$
\text{\em level}_{\text{\em D}(C^*(X; \K))}(E_\varphi) 
= \text{\em level}_{\text{\em D}(H^*(X; \K))}(H^*(E;\K)\otimes_{H^*(B;\K)}^{\mathbb L}H^*(X; \K)).  
$$
\end{thm}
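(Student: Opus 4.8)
The plan is to combine the Eilenberg--Moore description of the cochains on $E_\varphi$ with a transfer of the derived tensor product along the zig-zags of quasi-isomorphisms supplied by relative $\K$-formalizability, and then to invoke the invariance of the level under triangle equivalences that preserve the distinguished generator.

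First I would apply the Eilenberg--Moore theorem to the pull-back of the fibration $q$. Since $E_\varphi = X\times_B E$ is then also the homotopy pull-back and the standing hypotheses on $B$ guarantee convergence, there is a quasi-isomorphism of DG modules over $C^*(B;\K)$, hence in particular over $C^*(X;\K)$,
$$
C^*(E_\varphi;\K)\;\simeq\;C^*(E;\K)\otimes^{\mathbb L}_{C^*(B;\K)}C^*(X;\K),
$$
where the right-hand side is computed from a semifree resolution of $C^*(E;\K)$ over $C^*(B;\K)$ and carries the evident right $C^*(X;\K)$-action. It therefore suffices to identify the level of this derived tensor product in $\D(C^*(X;\K))$ with the level of $\h{E}\otimes^{\mathbb L}_{\h{B}}\h{X}$ in $\D(\h{X})$.

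Next I would exploit the homotopy-commutative $3\times3$ diagram of DG-algebra maps that witnesses relative $\K$-formalizability. Each of its three columns is a zig-zag of quasi-isomorphisms of DG algebras, $C^*(X;\K)\xleftarrow{m_X}TV_X\xrightarrow{\phi_X}\h{X}$ and similarly for $B$ and $E$, so restriction of scalars gives triangle equivalences $\D(C^*(X;\K))\xleftarrow{\sim}\D(TV_X)\xrightarrow{\sim}\D(\h{X})$ carrying the free rank-one module to the free rank-one module, and likewise for $B$ and $E$. Because the $TV$-models are cofibrant, the homotopy-commuting squares built from $\widetilde q$, $q^*$, $H^*(q)$ (resp.\ $\widetilde\varphi$, $\varphi^*$, $H^*(\varphi)$) promote to genuine compatibilities of derived base change: fixing a semifree $TV_B$-module model $P\xrightarrow{\simeq}TV_E$, the complexes $P\otimes_{TV_B}C^*(B;\K)$ and $P\otimes_{TV_B}\h{B}$ are semifree resolutions of $C^*(E;\K)$ and $\h{E}$ over $C^*(B;\K)$ and $\h{B}$ (here one uses that $m_B$ and $\phi_B$ are quasi-isomorphisms), and tensoring these further with $TV_X$, $C^*(X;\K)$, $\h{X}$ along the second family of squares identifies, up to quasi-isomorphism over the respective algebras,
$$
C^*(E;\K)\otimes^{\mathbb L}_{C^*(B;\K)}C^*(X;\K)\ \longleftrightarrow\ TV_E\otimes^{\mathbb L}_{TV_B}TV_X\ \longleftrightarrow\ \h{E}\otimes^{\mathbb L}_{\h{B}}\h{X}
$$
under the triangle equivalences above.

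Finally, a triangle equivalence that sends the distinguished generator to the distinguished generator preserves every thickening $\text{\tt thick}^n$, hence the level (cf.\ \cite[2.1]{ABIM}); combining this with the two displayed identifications yields
$$
\text{level}_{\D(C^*(X;\K))}(E_\varphi)=\text{level}_{\D(\h{X})}\bigl(\h{E}\otimes^{\mathbb L}_{\h{B}}\h{X}\bigr),
$$
as claimed. I expect the middle step to be the main obstacle: one must convert the merely homotopy-commutative formalizability diagram into an honest equality of derived functors. The cleanest route is the explicit one sketched above --- transport a single fixed semifree $TV_B$-model of $TV_E$ across all three columns and check that the right actions of $C^*(X;\K)$ and $\h{X}$ on the resulting tensored complexes agree with the intended ones up to the chosen homotopies --- and here it is essential that homotopic maps out of the cofibrant algebra $TV_B$ induce isomorphic restriction-of-scalars functors on derived module categories.
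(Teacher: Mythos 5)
The paper does not actually prove this statement here: it is quoted verbatim from the author's earlier preprint [K2, Theorem 1.3], so there is no in-paper argument to compare against. Your proposal is the natural reconstruction and is essentially sound: the Eilenberg--Moore identification $C^*(E_\varphi;\K)\simeq C^*(E;\K)\otimes^{\mathbb L}_{C^*(B;\K)}C^*(X;\K)$ as $C^*(X;\K)$-modules is legitimate because $\K$-formality forces $B$ to be simply connected and the cohomologies are locally finite, and restriction of scalars along the quasi-isomorphisms $C^*(-;\K)\leftarrow TV_{(-)}\rightarrow H^*(-;\K)$ gives triangle equivalences carrying the rank-one free module to the rank-one free module, hence preserving level. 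You have also correctly isolated the only delicate point, namely that the formalizability diagram commutes just up to homotopy, so the two $TV_B$-module structures on $C^*(X;\K)$ (via $m_X\widetilde\varphi$ versus $\varphi^*m_B$) must be compared; this is exactly where cofibrancy of $TV_B$ and the multiplicative (derivation-type) homotopies of the Halperin--Lemaire framework are used to show that homotopic algebra maps out of a cofibrant DGA induce isomorphic base-change of the fixed semifree resolution $P\to TV_E$, compatibly with the right $C^*(X;\K)$- and $H^*(X;\K)$-actions. Spelling that bimodule comparison out in full would be required for a complete proof, but the strategy and all the ingredients you cite are correct and consistent with how the paper uses the result elsewhere (e.g.\ in Lemma \ref{lem:pullback} via [K1, Proposition 3.2]).
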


The level of a DG module $M$ is evaluated with the length of a semi-free
filtration of $M$. 

\begin{defn} \cite[4.1]{ABIM}\cite{FHT_GSpace}\cite{FHT} 
A {\it semi-free filtration} of a DG module
$M$ over a DG algebra $A$ is a family $\{F^n\}_{n\in {\mathbb Z}}$ of
DG submodules of $M$ satisfying the condition:
$F^{-1}=0$, $F^n \subset F^{n+1}$, $\cup_{n\geq 0}F^n =M$ and 
$F^n/F^{n-1}$ is isomorphic to a direct sum of shifts of $A$. 
A module $M$ admitting a semi-free filtration is called {\it semi-free}.  
We say that the filtration $\{F^n\}_{n\in {\mathbb Z}}$ has 
{\it class at  most} $l$
if $F^l=M$ for some integer $l$. 
Moreover  $\{F^n\}_{n\in {\mathbb Z}}$ is called {\it finite} if 
the subquotients are finitely generated.   
\end{defn}

The above argument yields that $F^n$ is constructed from $F^{n-1}$ via the mapping cone construction.

\begin{thm}\cite[Theorem 4.2]{ABIM}
\label{thm:ABIM-I}
Let $M$ be a DG module over a DG algebra $A$ and $l$ a non-negative
integer. Then 
$\text{\em level}_{\text{\em D}(A)}(M)\leq l$ if and only if 
$M$ is a retract in $\text{\em D}(A)$ of some DG module admitting a finite
semi-free filtration of class at most $l-1$. 
\end{thm}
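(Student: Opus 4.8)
This is \cite[Theorem 4.2]{ABIM}; I sketch the argument, whose content is a dictionary between the filtration of $\text{\tt thick}^{\bullet}_{\D(A)}(A)$ by thickenings and the semi-free filtrations of DG modules. Put $\mathcal{A}=\text{\tt add}^{\Sigma}(A)$, so that $\text{\tt thick}^{l}_{\D(A)}(A)=\text{\tt smd}(\mathcal{A}^{*l})$. The basic facts to be used are: the operation $*$ is associative, $\mathcal{A}$ and each $\mathcal{A}^{*n}$ are closed under finite direct sums and shifts, and $\mathcal{A}^{*n}*\mathcal{A}^{*m}=\mathcal{A}^{*(n+m)}$ (see \cite{B-B}, \cite[2.2.1]{ABIM}); moreover the objects of $\mathcal{A}$ are exactly the DG modules isomorphic in $\D(A)$ to a finite direct sum of shifts of $A$, and such modules are semi-free.

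For the \emph{if} direction, let $N$ carry a finite semi-free filtration $0=F^{-1}\subset F^{0}\subset\cdots\subset F^{l-1}=N$ with each $F^{i}/F^{i-1}$ a finite direct sum of shifts of $A$. As noted just after the definition of semi-free filtrations, the inclusion $F^{i-1}\hookrightarrow F^{i}$ arises from a mapping cone construction, giving a triangle $F^{i-1}\to F^{i}\to F^{i}/F^{i-1}\to\Sigma F^{i-1}$ in $\D(A)$ whose third term lies in $\mathcal{A}$. Induction on $i$ then yields $F^{i}\in\mathcal{A}^{*(i+1)}$, hence $N=F^{l-1}\in\mathcal{A}^{*l}$. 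If $M$ is a retract of $N$ in $\D(A)$, then $M\in\text{\tt smd}(\mathcal{A}^{*l})=\text{\tt thick}^{l}_{\D(A)}(A)$, i.e. $\text{level}_{\D(A)}(M)\le l$.

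For the \emph{only if} direction, suppose $\text{level}_{\D(A)}(M)\le l$; then $M$ is a retract in $\D(A)$ of some $L\in\mathcal{A}^{*l}$, and it suffices to realize $L$, up to isomorphism in $\D(A)$, by a DG module with a finite semi-free filtration of class at most $l-1$. Induct on $l$. If $l=0$ then $L\simeq 0$ and the zero module with $F^{-1}=0=N$ works; if $l=1$ then $L$ is isomorphic in $\D(A)$ to a finite direct sum $N$ of shifts of $A$, with the filtration $F^{-1}=0\subset F^{0}=N$ of class $0$. For $l\ge 2$ write $\mathcal{A}^{*l}=\mathcal{A}^{*(l-1)}*\mathcal{A}$, so $L$ occurs in a triangle $L'\to L\to L''\to\Sigma L'$ with $L'\in\mathcal{A}^{*(l-1)}$ and $L''\in\mathcal{A}$. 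By induction choose a DG module $N'$ with a finite semi-free filtration $0=F^{-1}\subset\cdots\subset F^{l-2}=N'$ and an isomorphism $L'\cong N'$ in $\D(A)$, and note $L''$ is isomorphic in $\D(A)$ to a finite direct sum $P$ of shifts of $A$. Under these isomorphisms the connecting morphism $\Sigma^{-1}L''\to L'$ becomes a morphism $\Sigma^{-1}P\to N'$; since $\Sigma^{-1}P$ is semi-free, $\text{Hom}_{\D(A)}(\Sigma^{-1}P,N')\cong H_{0}(\text{Hom}_{A}(\Sigma^{-1}P,N'))$, so it is represented by an honest chain map $h:\Sigma^{-1}P\to N'$. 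The canonical triangle $\Sigma^{-1}P\xrightarrow{h}N'\to C_{h}\to P$ is then isomorphic, as a triangle in $\D(A)$, to a rotation of $L'\to L\to L''\to\Sigma L'$, whence $C_{h}\cong L$ in $\D(A)$. Finally, as a graded $A^{\natural}$-module $C_{h}=N'\oplus P$, and $N'$ is a DG submodule of $C_{h}$ with $C_{h}/N'\cong P$; hence setting $G^{i}:=F^{i}$ for $i\le l-2$ and $G^{i}:=C_{h}$ for $i\ge l-1$ defines a finite semi-free filtration of $C_{h}$ of class at most $l-1$, and $N:=C_{h}$ is as required (so $M$ is a retract in $\D(A)$ of $N$).

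The only non-formal step is in the \emph{only if} direction: lifting the abstract triangle $L'\to L\to L''\to\Sigma L'$ to a DG-level mapping cone whose underlying graded module splits off a free summand. This is precisely where semi-freeness of finite direct sums of shifts of $A$ enters, both to promote the connecting morphism to a chain map $h$ and to identify $C_{h}$ with $L$ in $\D(A)$; everything else — associativity of $*$, the induction on filtration length, and the fact that the filtration of $C_{h}$ stabilizes without any padding of indices — is formal.
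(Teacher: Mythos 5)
Your argument is correct and is essentially the proof given in \cite[Theorem 4.2]{ABIM}, which the paper simply cites; it also matches the dictionary the paper itself sets up in its preliminaries, where each step $F^{n-1}\subset F^{n}$ of a semi-free filtration is exhibited as a mapping cone on a map from a direct sum of shifts of $A$, i.e.\ as a triangle with third term in $\text{\tt add}^{\Sigma}(A)$. The two non-formal points you flag --- that $\text{smd}$ requires only one pass because a retract of a retract is a retract, and that semi-freeness of $\Sigma^{-1}P$ lets you represent the connecting morphism by a genuine chain map so that $L$ is realized by the honest cone $C_h$ carrying the extended filtration --- are exactly the points that need saying, and you handle them correctly.
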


\section{Proofs of Theorem \ref{thm:main_new} and 
Proposition \ref{prop:main2}}

We may write $C^*(X)$ and $H^*(X)$ in place of $C^*(X; \K)$ and $H^*(X; \K)$,
respectively. 

\medskip
\noindent
{\it Proof of Theorem \ref{thm:main_new}.} We first prove the
assertion under the condition (i). 

Let ${\mathcal K} \to H^*(E) \to 0$ be the resolution of $H^*(E)$ which
is obtained by the two-sided Koszul resolution of 
$H^*(B)\cong \K[u_1, ..., u_m]$; that is, 
$$
({\mathcal K}, d) = (H^*(E)\otimes E[su_1, ..., su_m]\otimes 
\K[u_1, ..., u_m], d), 
$$ 
where $d(su_i)= q^*u_i\otimes 1 -1 \otimes u_i$ for $i = 1, ..., m$, 
$\text{bideg} \ y\otimes 1 = (0, \deg y)$ for $y \in H^*(E)$, 
$\text{bideg} \ su_i = (-1, \deg u_i)$, 
$\text{bideg} \ 1\otimes \ u_i = (0,  \deg u_i)$ and 
$E[su_1, ..., su_m]$ denotes the exterior algebra generated by 
$su_1, ..., su_m$; see \cite{B-S}.  
Thus in $\D(H^*(X))$, 
\begin{eqnarray*}
{\mathcal L}:= H^*(E)\otimes_{H^*(B)}^{{\mathbb L}}H^*(X)  
\! &\cong& \! 
({\mathcal K}\otimes_{H^*(B)}H^*(X), \delta) \\
&\cong& 
(H^*(E)\otimes E[su_1, ..., su_m]\otimes H^*(X), \delta),
\end{eqnarray*}
where $\delta(su_i)=q^*u_i\otimes 1 - 1\otimes \varphi^*u_i$. 
Put $s=\dim \Gamma$. Without loss of generality, it can be assumed that 
the set $\{u_{m-s+1}, ..., u_m\}$ is a 
basis of $\Gamma$.  
Let $\K\langle M \rangle$ denote the vector space spanned by a set
$M$, where $\K\langle \phi \rangle = \K$. 
Put $F^0=H^*(E)\otimes E[u_{m-s+1}, ..., u_m]\otimes H^*(X)$. 
For any integer $l$ with $1\leq l \leq m-s$, we define 
a DG submodule $F^l$ of ${\mathcal L}$ by 
\begin{eqnarray*}
F^l= H^*(E)\otimes E[su_{m-s+1}, ..., su_m] & &\\ 
&&\hspace{-4cm}\otimes  
\K\langle su_{i_1}\cdots su_{i_k} \ | \ 0\leq k \leq l, 
\ 1\leq i_1 < \cdots < i_k \leq m-s \rangle 
\otimes H^*(X). 
\end{eqnarray*}
Then $\{F^l\}_{0\leq l \leq m-s}$ is a finite semi-free filtration of 
${\mathcal L}$.
In fact, $\bigcup_{0\leq l \leq m-s}F^l={\mathcal L}$
and the quotient $F^l/F^{l-1}$ is isomorphic to 
a finite direct sum of shifts of $H^*(X)$ in $\D(H^*(X))$. 
Observe that 
$\text{Tor}^{\Lambda}(H^*(E), \K)=H(H^*(E)\otimes E[su_{m-s+1}, ...,
su_m])$ is of finite dimension by assumption. 
It follows from Theorem \ref{thm:ABIM-I}
that $\text{level}_{\text{D}(H^*(X))}({\mathcal L})$ is less
than or equal to $m-s+1$. 
In view of Theorem \ref{thm:level_formal}, we have 
$\text{level}_{\text{D}(C^*(X))}(E_{\varphi} ) =  
\text{level}_{\text{D}(H^*(X))}({\mathcal L}).  
$
One obtains the inequality. 

Suppose that the condition (ii) holds. 
It is immediate that $\text{Ker} \ (\Delta^* |_{QH^*(B'\times
B')})\cong \K\langle z_1\otimes 1 - 1\otimes z_1, ..., 
z_m\otimes 1 - 1\otimes z_m \rangle$. 
We have a free resolution of 
$H^*(B')$ as a right $H^*(B'\times B')$-module of the form 
$$
(E[sz_1, ..., sz_m]\otimes H^*(B'\times B'), \partial) \to H^*(B') \to 0 
$$
in which $\partial(sz_i)=z_i\otimes 1 -1\otimes z_i$ for $i = 1, ..., m$
; see 
\cite{S} \cite[Proposition 1.1]{K}.  
This enables us to conclude that in $\D(H^*(X))$
$$
H^*(B_\psi^I)\otimes^{\mathbb L}_{H^*(B\times B)}H^*(X)
\cong (E[sz_1, ..., sz_m]\otimes H^*(X), \widetilde{\partial})
$$
for which $\widetilde{\partial}(sz_i)=\psi^*(z_i\otimes 1 -1\otimes
z_i)$ for $i = 1, ..., m$.  
By adapting the above argument, we obtain the result.  
\hfill \qed

\begin{cor}
\label{cor:fibration} Let $F \to E \stackrel{q}{\to} B$ 
be a fibration for which $q$ is  $\K$-formalizable.  
Suppose that $H^*(B; \K)$ is a polynomial 
algebra generated by $m$ indecomposable elements. Then  
$
\text{\em level}_{\text{\em D}(C^*(E; \K))}(F)
\leq m-\dim (\text{\em Ker} \ q^* \cap QH^*(B; \K)) +1.
$ 
\end{cor}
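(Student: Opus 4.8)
The plan is to deduce Corollary \ref{cor:fibration} from Theorem \ref{thm:main_new} by exhibiting the fibration $F \to E \stackrel{q}{\to} B$ as the corner of an appropriate pull-back square to which the theorem applies. The natural choice is the path-loop fibre square $\mathcal{F}_1$ with $B' = B$ and the classifying map $\varphi$ replaced by $q$ itself; more precisely, I would pull back the path-loop fibration $PB \to B$ (with $\varepsilon_0 : PB \to B$ the evaluation) along $q : E \to B$. The resulting corner space $E_q$ is the homotopy fibre of $q$, which is $F$ up to weak equivalence, and the cochain type level in question is $\text{level}_{\text{D}(C^*(E;\K))}(E_q)$. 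So the statement becomes an instance of Theorem \ref{thm:main_new} applied to this square with the roles $X \leadsto E$, $B \leadsto B$, $E \leadsto PB$, $\varphi \leadsto q$.

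Next I would check the hypotheses of Theorem \ref{thm:main_new}(i) in this setting. First, relative $\K$-formalizability of the pair $(\varepsilon_0, q)$: this is where the hypothesis that $q$ is $\K$-formalizable enters. By definition $q$ being $\K$-formalizable means $(q, \iota)$ is relatively $\K$-formalizable for a constant map $\iota : * \to B$; one then needs to observe that $PB$ is contractible, so $\varepsilon_0 : PB \to B$ is (up to homotopy, after replacing $PB$ by $*$) essentially the constant map into $B$, and hence the relative formalizability data for $(q,\iota)$ furnishes exactly the commutative diagram required for $(\varepsilon_0, q)$ — possibly after reindexing so that the map called $\varphi$ in the theorem is our $q$. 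Second, the finiteness condition: here the total space $PB$ is contractible, so $H^*(PB;\K) = \K$, and for any subalgebra $\Lambda \subseteq H^*(B;\K)$ one has $\text{Tor}^\Lambda_*(\K, \K)$, which I need to be finite-dimensional. Since $\Lambda$ is generated by $\Gamma = \text{Ker } q^* \cap QH^*(B;\K)$, which is a finite-dimensional subspace of the indecomposables of a polynomial algebra, $\Lambda$ is itself a polynomial algebra on $\dim\Gamma$ generators of even... — wait, not necessarily even, but in any case $\Lambda$ is a polynomial algebra on finitely many generators, hence of finite global dimension, so $\text{Tor}^\Lambda_*(\K,\K)$ is indeed finite-dimensional (it is the Koszul homology, an exterior algebra on $\dim\Gamma$ classes). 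This verifies condition (i).

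With both hypotheses in hand, Theorem \ref{thm:main_new} gives directly
$$
\text{level}_{\text{D}(C^*(E;\K))}(E_q) \leq m - \dim\Gamma + 1
= m - \dim(\text{Ker } q^* \cap QH^*(B;\K)) + 1,
$$
and since $E_q \simeq F$, this is the claimed inequality. The final step is to note that the weak equivalence $E_q \simeq F$ induces a quasi-isomorphism of $C^*(E;\K)$-modules, so the cochain type level is unchanged, as recorded in Section 2.

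The step I expect to require the most care is the verification of relative $\K$-formalizability of $(\varepsilon_0, q)$ from the bare hypothesis that $q$ is $\K$-formalizable. One has to be slightly careful that the definition of $\K$-formalizability of $q$ (phrased via a constant map $\iota : * \to B$) really does produce, after the standard identification of the homotopy fibre with the pull-back of the path-loop fibration, the commutative-up-to-homotopy square of $TV$-models demanded in Theorem \ref{thm:main_new}; this is essentially bookkeeping with models of the contractible space $PB$, but it is the one place where something could go subtly wrong. Everything else — the contractibility of $PB$, the finiteness of $\text{Tor}$ over a polynomial algebra, and the homotopy invariance of the level — is routine.
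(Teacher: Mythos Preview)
Your proposal is correct and follows essentially the same route as the paper: pull back the path fibration $\pi:PB\to B$ along $q:E\to B$, identify the corner with the homotopy fibre $F_q\simeq F$, verify that $(\pi,q)$ is relatively $\K$-formalizable from the hypothesis on $q$, and invoke Theorem~\ref{thm:main_new}(i) using $H^*(PB;\K)=\K$. The paper handles the step you flag as delicate---upgrading the formalizability of $(q,\iota)$ to that of $(\pi,q)$---by writing down the evident diagram with $TV_{PB}=\K$ (unit and augmentation) and appealing to \cite[Theorem~3.7]{FHT} to obtain the required homotopy commutativity; this confirms your expectation that it is bookkeeping with models of the contractible space $PB$.
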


\begin{proof}
By assumption $q$ is $\K$-formalizable; that is, $(q, \iota)$ is a
relatively $\K$-formalizable pair for some constant map 
$\iota : * \to B$. We choose the element $\iota(*)$ as a basepoint of
$B$. Consider the fibre square of the form 
$$
\xymatrix@C25pt@R12pt{
F_q \ar[r] \ar[d] & PB \ar[d]^{\pi} \\
E  \ar[r]^{q}  & B   
}
$$
in which $\pi : PB \to B$ is the path fibration. Observe that
$F_q$ is the homotopy fibre of $q$ and hence $F\simeq F_q$. 
Let $\iota' : * \to PB$ be a homotopy equivalence map with  
$\iota = \pi \iota'$. Then we have a diagram
$$
\xymatrix@C25pt@R12pt{
TV_B \ar[r]^{m_B}_{\simeq} \ar[d]^{\widetilde{\pi}} & C^*(B)
\ar[d]^{\pi^*} \ar@/^3pc/[dd]^{\iota^*}
\\
\K  \ar[r]_{m_{PB}}^{\simeq} \ar[dr]_{=} & 
C^*(PB) \ar[d]^{(\iota')^*}_{\simeq} \\   
& \ \ \ \ \ C^*(*)=\K,}
$$ 
where $\widetilde{\pi}$ and $m_{PB}$ are the augmentation and the unit,
respectively. Observe that $m_{PB}$ is a quasi-isomorphism. 
Since the outer square and the triangle are commutative
and $\iota^*=(\iota')^*\pi^*$, it follows from \cite[Theorem 3.7]{FHT}
that $\pi^*m_B\simeq m_{PB}\widetilde{\pi}$. This implies that 
$(q, \pi)$ is relatively $\K$-formalizable.  
It is immediate that the dimension of  
$\text{Tor}^{\K[\text{Ker}\ q^* \cap QH^*(B)]}
(H^*(PB), \K)$ is finite because $H^*(PB)=\K$. 
Theorem \ref{thm:main_new} yields the result. 
\end{proof}

We have a variant of Theorem \ref{thm:main_new}. 

\begin{prop}
\label{prop:homotopy_fibre}
Let ${\mathcal F}$ be the fibre square as in Theorem \ref{thm:main_new} for
which the condition {\em (i)} holds. 
Let $F_\varphi$ denote the homotopy fibre of $\varphi : X \to B$. 

{\em (1)} Suppose that $\text{\em Tor}^{\Lambda}_*(H^*(E; \K), \K)$ is a
trivial $H^*(B; \K)/(\Lambda^+)$-module. 
Then 
$$
\text{\em level}_{\text{\em D}(C^*(X; \K))}(E_{\varphi} )
= \text{\em level}_{\text{\em D}(C^*(X; \K))}(F_{\varphi}).  
$$

{\em (2)} Suppose that the cohomology $H^*(X; \K)$ is a polynomial algebra and 
the dimension of $H^*(F_\varphi)$ is finite. Then 
$$
\text{\em level}_{\text{\em D}(C^*(X; \K))}(F_{\varphi})
= \dim QH^*(X; \K)+1.$$
\end{prop}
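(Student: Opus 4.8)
The plan is to derive both parts from the explicit semi-free filtration built in the proof of Theorem \ref{thm:main_new} together with the identification provided by Theorem \ref{thm:level_formal}. Recall that under condition (i) one has, in $\D(H^*(X))$, the model
$$
{\mathcal L} = H^*(E)\otimes_{H^*(B)}^{{\mathbb L}}H^*(X)
\cong (H^*(E)\otimes E[su_1,\dots,su_m]\otimes H^*(X),\delta),
$$
with $\delta(su_i)=q^*u_i\otimes 1-1\otimes\varphi^*u_i$, and that the homotopy fibre $F_\varphi$ corresponds, again via relative formalizability and the two-sided Koszul resolution of $H^*(B)$ over itself, to the object $\widetilde{\mathcal L}:=(E[su_1,\dots,su_m]\otimes H^*(X),\widetilde\delta)$ with $\widetilde\delta(su_i)=-1\otimes\varphi^*u_i$; this is the same computation as the path-space case in Corollary \ref{cor:fibration}, since $F_\varphi$ is the corner of the pull-back of $PB\to B$ along $\varphi$, and the pair $(\pi,\varphi)$ is relatively $\K$-formalizable because $(q,\varphi)$ is. So it suffices to compare $\text{level}_{\D(H^*(X))}({\mathcal L})$ with $\text{level}_{\D(H^*(X))}(\widetilde{\mathcal L})$.

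For part (1), first I would arrange, as in the proof of Theorem \ref{thm:main_new}, that $\{u_{m-s+1},\dots,u_m\}$ is a basis of $\Gamma=\text{Ker}\,\varphi^*\cap QH^*(B)$, so that $\delta(su_i)=q^*u_i\otimes 1$ for $i>m-s$ and $\delta(su_i)=q^*u_i\otimes 1-1\otimes\varphi^*u_i$ for $i\le m-s$. Then ${\mathcal L}$ decomposes over $H^*(X)$ as the factor $\bigl(H^*(E)\otimes E[su_{m-s+1},\dots,su_m],d\bigr)$, whose homology is $\text{Tor}^\Lambda_*(H^*(E),\K)$, tensored with the Koszul complex on $su_1,\dots,su_{m-s}$ that computes the homotopy fibre; more precisely ${\mathcal L}$ is a twisted tensor product, and the filtration by Koszul length identifies its associated $E_2$-term with $\text{Tor}^\Lambda_*(H^*(E),\K)\otimes H(\widetilde{\mathcal L})$. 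The triviality hypothesis on $\text{Tor}^\Lambda_*(H^*(E),\K)$ as a module over $H^*(B)/(\Lambda^+)$ is exactly what forces this spectral sequence to collapse and, what is more, forces the twisting to be trivial up to quasi-isomorphism, so that ${\mathcal L}\simeq \text{Tor}^\Lambda_*(H^*(E),\K)\otimes_\K\widetilde{\mathcal L}$ in $\D(H^*(X))$ with $\text{Tor}^\Lambda_*(H^*(E),\K)$ a finite-dimensional graded vector space. Since $\text{level}$ is unchanged by passing to a finite direct sum of shifts of a given object (as recalled in the text before Theorem \ref{thm:Borel}), this gives $\text{level}_{\D(H^*(X))}({\mathcal L})=\text{level}_{\D(H^*(X))}(\widetilde{\mathcal L})$, hence the claimed equality. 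I expect the main obstacle here to be justifying rigorously that the module-triviality hypothesis kills the higher differentials and the extension problem, i.e.\ producing the honest quasi-isomorphism ${\mathcal L}\simeq \text{Tor}^\Lambda(H^*(E),\K)\otimes\widetilde{\mathcal L}$ rather than merely an $E_\infty$-isomorphism; this should follow by choosing a $\K$-linear section of $H^*(E)\to \text{Tor}^\Lambda(H^*(E),\K)$ compatible with the module structure and checking that the resulting map of complexes is compatible with $\delta$ and $\widetilde\delta$.

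For part (2), with $H^*(X)$ a polynomial algebra and $\dim H^*(F_\varphi)<\infty$, I would work directly with $\widetilde{\mathcal L}=(E[su_1,\dots,su_m]\otimes H^*(X),\widetilde\delta)$. The upper bound $\text{level}\le \dim QH^*(X)+1$ comes from Corollary \ref{cor:fibration}: reversing the roles, $F_\varphi$ is the homotopy fibre of a $\K$-formalizable map with base of polynomial cohomology, and $\dim H^*(F_\varphi)<\infty$ makes the relevant $\text{Tor}$ finite-dimensional, so the corollary (with $X$ playing the role of the total space) applies once one notes $\text{Ker}(\text{id})\cap QH^*(X)=0$—wait; rather, the clean way is to build the semi-free filtration of $\widetilde{\mathcal L}$ over $H^*(X)$ by Koszul length in the $su_i$, giving class $\le \dim QH^*(X)$, once one has first changed variables among $u_1,\dots,u_m$ so that $\varphi^*u_1,\dots,\varphi^*u_k$ is a regular sequence generating $\text{Im}\,\varphi^*\cap QH^*(X)$ and the remaining $su_i$ contribute a finite exterior factor (finite by the $\dim H^*(F_\varphi)<\infty$ hypothesis). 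For the matching lower bound $\text{level}\ge \dim QH^*(X)+1$, I would invoke the standard lower estimates for levels from \cite[Sections 4,5]{ABIM}: since $H^*(X)$ is polynomial on $n:=\dim QH^*(X)$ generators and $H^*(F_\varphi)$ is a finitely-generated module over it which is not free (its finite dimension over $\K$ precludes freeness when $n\ge 1$, and the case $n=0$ is trivial), an Ext/Tor length argument—e.g.\ that $\text{Tor}^{H^*(X)}_n(H^*(F_\varphi),\K)\ne 0$, so $\text{pd}_{H^*(X)}H^*(F_\varphi)=n$—together with the inequality $\text{level}_{\D(H^*(X))}(M)\ge 1+$ (a suitable homological width of $M$) forces $\text{level}\ge n+1$. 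The main obstacle in (2) will be pinning down exactly which lower-bound lemma of \cite{ABIM} to cite and verifying its hypotheses (boundedness/finiteness of the relevant Tor), but since $H^*(X)$ is a polynomial ring and $H^*(F_\varphi)$ is finite-dimensional, the Koszul (regular-sequence) computation of $\text{Tor}^{H^*(X)}(H^*(F_\varphi),\K)$ makes both the top non-vanishing and the finiteness transparent.
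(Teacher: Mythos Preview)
Your outline for part (1) is essentially the paper's argument: both reduce to showing that, under the triviality hypothesis, ${\mathcal L}$ and the homotopy-fibre model $\widetilde{\mathcal L}=\K\otimes^{\mathbb L}_{H^*(B)}H^*(X)$ become finite direct sums of shifts of the \emph{same} object $\K\otimes^{\mathbb L}_{\K[u_1,\dots,u_{m-s}]}H^*(X)$ in $\D(H^*(X))$, after which \cite[Lemma~2.4 (1)(3)]{ABIM} gives the equality of levels. The paper does not pass through a spectral sequence; it simply asserts the isomorphism ${\mathcal L}\cong(\text{Tor}^{\Lambda}(H^*(E),\K)\otimes E[su_1,\dots,su_{m-s}]\otimes H^*(X),\delta)$ in $\D(H^*(X))$ and then uses triviality of the $H^*(B)/(\Lambda^+)$-action to reduce $\delta(su_i)$ to $-1\otimes\varphi^*u_i$. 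Your worry about producing an honest quasi-isomorphism (rather than just an $E_\infty$-statement) is legitimate and is precisely the step the paper leaves implicit; you have identified the right place where care is needed.

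For part (2) your route is unnecessarily circuitous. The paper dispatches it in one line by applying \cite[Corollary~5.7]{ABIM} to the DG module $\K\otimes^{\mathbb L}_{H^*(B)}H^*(X)$ over the polynomial algebra $H^*(X)$: that result gives directly that the level of a DG module with finite-dimensional homology over a polynomial algebra on $n$ generators equals $n+1$. Your separate upper/lower bound argument (Koszul filtration for $\leq$, projective dimension for $\geq$) is morally what underlies that corollary, but there is no need to reconstruct it; in particular your hesitation about ``which lower-bound lemma of \cite{ABIM} to cite'' disappears once you invoke Corollary~5.7. Note also that the finiteness hypothesis you need is exactly $\dim H^*(F_\varphi)<\infty$, since by Theorem~\ref{thm:level_formal} one has $H(\K\otimes^{\mathbb L}_{H^*(B)}H^*(X))\cong H^*(F_\varphi)$.
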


\begin{proof}
With the same notation as in the proof of Theorem \ref{thm:main_new},  
we see that in $\D=\D(H^*(X))$ 
$$
{\mathcal L}  \cong (\text{Tor}^{\Lambda}(H^*(E), \K)\otimes 
E[su_1, ..., su_{m-s}]\otimes H^*(X), \delta). 
$$ 
Since $\text{Tor}^{\Lambda}(H^*(E), \K)$ is a trivial 
$H^*(B)/(\Lambda^+)$-module by assumption, it follows that 
$\delta(su_i)= -1\otimes \varphi^*(u_i)$ for $i = 1, ..., m-s$. Thus 
we see that 
$$
{\mathcal L}   \cong 
\bigoplus_i\Sigma^{l_i}\K\otimes_{\K[u_1, ..., u_{m-s}]}^{{\mathbb L}}H^*(X)  
$$ in $\D$  for some integers $l_i$. 
On the other hand,   
\begin{eqnarray*}
\K\otimes_{H^*(B)}^{{\mathbb L}}H^*(X)  \! & \cong & \! 
E[su_{m-s+1}, ..., su_m]\otimes E[su_1, ..., su_{m-s}]\otimes H^*(X) \\
& \cong & \bigoplus_k\Sigma^{l_k} E[su_1, ..., su_{m-s}]\otimes H^*(X)
\\
& \cong &  \bigoplus_k\Sigma^{l_k} 
\K\otimes_{\K[u_1, ..., u_{m-s}]}^{{\mathbb L}}H^*(X) 
\end{eqnarray*}
for some integers $l_k$. The result \cite[Lemma 2.4 (1)(3)]{ABIM} allows
us to conclude that 
\begin{eqnarray*}
\text{level}_{\D}(H^*(E)\otimes^{{\mathbb L}}_{H^*(B)}H^*(X)) \!& = & \!
\textstyle\mathop{\text{max}}\limits_i\{\text{level}_{\D}(\Sigma^{l_i}
\K\otimes_{\K[u_1, ..., u_{m-s}]}^{{\mathbb L}}H^*(X)) \}  \\
& = & \text{level}_{\D}(\K\otimes_{\K[u_1, ..., u_{m-s}]}^{{\mathbb L}}H^*(X)) \\
& = & \text{level}_{\D}
(\K\otimes_{H^*(B)}^{{\mathbb L}}H^*(X)) \\
& = & \text{level}_{\D(C^*(X))}(F_{\varphi}). 
\end{eqnarray*}  
The last equality follows from Theorem \ref{thm:level_formal} 
since $\varphi$ is $\K$-formalizable. 

Applying the result \cite[Corollary 5.7]{ABIM} to the DG module 
$\K\otimes_{H^*(B)}^{{\mathbb L}}H^*(X)$ over $H^*(X)$, we have the
latter half of the proposition. 
\end{proof}

\begin{rem}
\label{rem:ABIM}
Let ${\mathcal F}$ be the fibre square as in Theorem
\ref{thm:main_new}. Theorem \ref{thm:level_formal} and
\cite[Proposition 3.4 (1)]{ABIM} imply that 
$$
\text{level}_{\text{D}(C^*(X))}(E_{\varphi} ) =  
\text{level}_{\text{D}(H^*(X))}
(H^*(E)\otimes_{H^*(B)}^{{\mathbb L}}H^*(X) ) \leq 
\text{level}_{\text{D}(H^*(B))}(H^*(E)).  
$$
\end{rem}

\medskip
\noindent
{\it Proof of Proposition \ref{prop:main2}.}
Let $\{E_r, d_r\}$ and $\{\widehat{E}_r, \widehat{d}_r\}$ be 
the Eilenberg-Moore spectral sequence and 
the Leray-Serre spectral sequence for 
${\mathcal F}'$ with coefficients in $\K$, respectively.  
Since $\text{level}_{\D(C^*(B))}(E)=1$, it
follows from Theorem \ref{thm:ABIM-I}  
that $C^*(E)$ is a retract of a free $C^*(B)$-module 
of finite rank in $\D(C^*(B))$. 
Thus $H^*(E)$ is a projective $H^*(B)$-module and hence 
$$
\text{Tor}^{H^*(B)}_{-l, *}(H^*(E), \K) = 0 
\ \ \text{for} \ \ l >0. 
\eqnlabel{add-1}
$$ 
Since $E_2^{*,*}\cong \text{Tor}^{H^*(B)}_{-l, *}(H^*(E), \K)$, it
follows that $\{E_r, d_r\}$ collapses at the $E_2$-term. 

The induced map $j^* : H^*(E) \to H^*(F)$ factors through the edge
homomorphism $edge$ and coincides with the composite 
$$
\xymatrix@R12pt{
H^*(E) \ar@{->>}[r] & \text{Tor}^{H^*(B)}_{0, *}(H^*(E), \K) 
\ar[r]^(0.65){edge} & H^*(F). 
}
$$ 
By virtue of (3.1), we see that the edge homomorphism is an
isomorphism. This yields that $j^*$ is an epimorphism. 
Hence  $\{\widehat{E}_r, \widehat{d}_r\}$ collapses at the $E_2$-term. 
\hfill \qed

\medskip
\noindent
Proposition \ref{prop:main2} and the following lemma enable us 
to show that $E_\varphi$ in Theorem \ref{thm:main_new} is not of level 
one in some cases; see Section 5 for such examples. 

\begin{lem}
\label{lem:pullback}
Let ${\mathcal F}$ be the fibre square as in Theorem \ref{thm:main_new}. 
If the differential graded  module 
$H^*(E)\otimes_{H^*(B)}^{{\mathbb L}}H^*(X)$ 
is of level one, then so is $H^*(E_{\varphi})$. 
\end{lem}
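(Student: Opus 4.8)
The plan is to relate the two levels through the collapse of the Eilenberg--Moore spectral sequence, exactly as in the proof of Proposition~\ref{prop:main2}. First I would note that by Theorem~\ref{thm:level_formal} we have the identification
$$
\text{level}_{\text{D}(C^*(X))}(E_\varphi)
= \text{level}_{\text{D}(H^*(X))}\big(H^*(E)\otimes^{\mathbb L}_{H^*(B)}H^*(X)\big),
$$
so the hypothesis says precisely that the DG $H^*(X)$-module ${\mathcal L}:=H^*(E)\otimes^{\mathbb L}_{H^*(B)}H^*(X)$ has level one. By Theorem~\ref{thm:ABIM-I}, ${\mathcal L}$ is then a retract in $\text{D}(H^*(X))$ of a finitely generated free $H^*(X)$-module, so $H(\mathcal L)=\text{Tor}^{H^*(B)}_{*,*}(H^*(E),H^*(X))$ is a projective (hence free, in the graded connected setting) $H^*(X)$-module, concentrated in homological degree $0$.

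Next I would invoke the Eilenberg--Moore spectral sequence for the pull-back square ${\mathcal F}$, which has $E_2$-term $\text{Tor}^{H^*(B)}_{*,*}(H^*(E),H^*(X))$ converging to $H^*(E_\varphi)$. The vanishing of $\text{Tor}^{H^*(B)}_{-l,*}(H^*(E),H^*(X))$ for $l>0$ forces this spectral sequence to collapse at $E_2$, and it gives $H^*(E_\varphi)\cong \text{Tor}^{H^*(B)}_{0,*}(H^*(E),H^*(X))=H(\mathcal L)$ as $H^*(X)$-modules (the extension problem is trivial since everything is concentrated on a single line). Thus $H^*(E_\varphi)$ is a free $H^*(X)$-module of finite rank.

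Then I would transfer this back to the chain level. The edge homomorphism $C^*(X)\to C^*(E_\varphi)$ makes $C^*(E_\varphi)$ a DG module over $C^*(X)$, and since $H^*(E_\varphi)$ is free over $H^*(X)$ one can build a finite semi-free filtration of $C^*(E_\varphi)$ of class $0$: choose cocycles in $C^*(E_\varphi)$ representing a basis of the free $H^*(X)$-module $H^*(E_\varphi)$ and use them to define a quasi-isomorphism from a finite free $C^*(X)$-module onto $C^*(E_\varphi)$ (this is the standard minimal-model/lifting argument, using that $C^*(X)$ is homologically connected). By Theorem~\ref{thm:ABIM-I} this shows $\text{level}_{\text{D}(C^*(X))}(E_\varphi)\le 1$, and since $E_\varphi\neq\emptyset$ the level is exactly $1$.

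The main obstacle I expect is the last step: justifying rigorously that freeness of $H^*(E_\varphi)$ over $H^*(X)$ lifts to a cochain-level quasi-isomorphism from a finite free $C^*(X)$-module, i.e.\ that $C^*(E_\varphi)$ admits a finite semi-free filtration of class $0$ over $C^*(X)$. This requires a careful comparison of the Eilenberg--Moore model of $C^*(E_\varphi)$ with $C^*(X)$ and an obstruction-theoretic argument to realize an $H^*(X)$-basis by actual cocycles; alternatively one can argue more directly via Theorem~\ref{thm:level_formal} applied to the square ${\mathcal F}$ itself together with $\text{level}_{\text{D}(H^*(X))}(\mathcal L)=1$, which already yields $\text{level}_{\text{D}(C^*(X))}(E_\varphi)=1$ without re-deriving freeness, but one still wants to record the structural statement that $H^*(E_\varphi)$ is $H^*(X)$-free. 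The spectral sequence collapse and the identification of the limit are routine given Proposition~\ref{prop:main2}'s method.
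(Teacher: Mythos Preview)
You have misread the conclusion of the lemma. The claim is that $H^*(E_\varphi)$, viewed as an object of $\D(H^*(X))$ with trivial differential, has level one; it is \emph{not} asserting that $\text{level}_{\D(C^*(X))}(E_\varphi)=1$. The latter equality is immediate from the hypothesis via Theorem~\ref{thm:level_formal}, as you yourself observe at the end, so the main thrust of your argument is circular. The lemma is invoked later precisely to pass from the derived statement about ${\mathcal L}$ to an honest $H^*(X)$-module statement about $H^*(E_\varphi)$, to which Proposition~\ref{prop:main2} can then be applied; that passage is the entire content.

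Your intermediate work does, however, contain a proof of the correct statement: you show $H({\mathcal L})$ is a finitely generated projective $H^*(X)$-module and that $H^*(E_\varphi)\cong H({\mathcal L})$ as $H^*(X)$-modules, whence $H^*(E_\varphi)\in\text{\tt thick}^1_{\D(H^*(X))}(H^*(X))$. The paper does exactly this, but more directly. For the first step it simply applies homology to the retract diagram in $\D(H^*(X))$: since $\oplus\Sigma^{l_i}H^*(X)$ has trivial differential, $H({\mathcal L})$ is a retract of it, hence lies in $\text{\tt thick}^1$. For the identification $H^*(E_\varphi)\cong H({\mathcal L})$ the paper invokes \cite[Proposition~3.2]{K1}, which applies because the pair $(q,\varphi)$ in Theorem~\ref{thm:main_new} is assumed relatively $\K$-formalizable; no spectral sequence argument is needed. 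Your route through the topological Eilenberg--Moore spectral sequence is a legitimate alternative, but you should then check that its collapse yields an isomorphism of $H^*(X)$-modules and not merely of graded vector spaces. The entire final paragraph about lifting freeness to the cochain level is irrelevant to the lemma and should be deleted.
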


\begin{proof}
The DG module $H^*(E)\otimes_{H^*(B)}^{{\mathbb L}}H^*(X)$ is a retract
of a free $H^*(X)$-module $\oplus \Sigma^{l_i}H^*(X)$. 
Thus $H^*(H^*(E)\otimes_{H^*(B)}^{{\mathbb L}}H^*(X))$
is a retract of $H^*(\oplus \Sigma^{l_i}H^*(X))=\oplus
\Sigma^{l_i}H^*(X)$. We see that 
$H^*(H^*(E)\otimes_{H^*(B)}^{{\mathbb L}}H^*(X))$ is in  
$\text{thick}^1_{\D(H^*(X))}(H^*(X))$. 
Since $(q, \varphi)$ is relatively $\K$-formalizable, it follows from 
\cite[Proposition 3.2]{K1} that $H^*(E_{\varphi})$ is isomorphic to 
$H^*(H^*(E)\otimes_{H^*(B)}^{{\mathbb L}}H^*(X))$ as an
$H^*(X)$-module. We have the result.
\end{proof}

\section{Examples}
By applying Theorem \ref{thm:main_new}, Proposition
\ref{prop:homotopy_fibre}, Proposition \ref{prop:main2} 
and some results in \cite{ABIM}, we obtain computational examples of the
cochain type levels of spaces. 

We begin by recalling an important space which appears in toric topology. 
Let $T^m$ be the $m$-torus and 
$D^2$ the disc in ${\mathbb C}$, namely   
$D^2 =\{ z \in  {\mathbb C} \ | \ |z| \leq 1 \}$. 
Let $V$ be the set of ordinals $[m]=\{ 1, 2, ..., m\}$. 
For a subset $w \subset V$, we define 
$$
B_w :=\{ (z_1, ..., z_m) \in (D^2)^m \ | \ |z_i| = 1 \ \text{for} \ 
i \notin w \}. 
$$
Let $S$ be an abstract simplicial complex with the vertex set $V$ and 
${\mathcal Z}_S$ denote the subspace $(D^2)^m$
defined by 
$$
{\mathcal Z}_S = \cup_{\sigma \in S}B_\sigma. 
$$
Then the $m$-torus $T^m$ acts on ${\mathcal Z}_S$ via 
the natural action of $T^m$ on $(D^2)^m$. 
We then have a Borel fibration ${\mathcal F}_S$ of the form 
${\mathcal Z}_S \to ET^m\times_{T^m} {\mathcal Z}_S \stackrel{q}{\to}
BT^m$. 
Let $DJ(S)$ be the Davis-Januszkiewicz space associated with the given
abstract simplicial complex $S$; that is, 
$$
DJ(S) = \cup_{\sigma \in S}(BT)_\sigma
$$ 
for which $(BT)_\sigma$ is the subspace of $BT^m$ defined by 
$$
(BT)_\sigma = \{(x_1, ..., x_m) \in BT^m \ | \ x_i=* \ \text{for} \ 
i \notin \sigma \}. 
$$
The Stanley-Reisner
algebra $\K[S]$ is defined to be the quotient graded algebra of the form 
$$
\K[t_1, ..., t_m]/(t_{i_1}\cdots t_{i_l} ; (i_1, ..., i_l) \notin S),
$$
where $\text{deg} \ t_i =2$ for any $i = 1, ..., m$.  
Observe that $H^*(DJ(S))$ is isomorphic to the Stanley-Reisner
algebra $\K[S]$ 
and $H^*({\mathcal Z}_S; \K)\cong \text{Tor}^{H^*(BT^m)}_*(\K[S], \K)$ 
as an algebra; see  \cite{B-P} and \cite{Pa}.

Since the construction of the Davis-Januszkiewicz space is natural
with respect to simplicial maps; that is, for a simplicial map 
$\phi : K \to S$, we have a map $DJ(K) \to DJ(S)$. In particular, the
inclusion of abstract simplicial complex $S$ with the vertex set $[m]$
to the standard $m$-dimensional simplicial complex $\Delta^{[m]}$ gives
rise to the inclusion $i : DJ(S) \to DJ(\Delta^{[m]})=BT^m$. 

The result \cite[Theorem 6.29]{B-P} due to Buchstaber and Panov asserts
that there exists a deformation retract 
$j : ET^m\times_{T^m} {\mathcal Z}_S \to DJ(S)$ such that 
the diagram 
$$
\xymatrix@C25pt@R15pt{
ET^m\times_{T^m} {\mathcal Z}_S \ar[r]^(0.6)p \ar[d]_j & BT^m \ar@{=}[d] \\
DJ(S) \ar[r]_i & BT^m
}
$$
is commutative. Thus we see that the homotopy fibre of the inclusion
$i : DJ(S) \to BT^m$ has the homotopy type of the moment-angle complex 
${\mathcal Z}_S$. The singular cochain complex $C^*(DJ(S))$ is viewed as a 
$C^*(BT^m)$-module via the induced map $C^*(i)$. We then have 

\begin{prop}
\label{prop:DJ} 
$
\text{\em level}_{\text{\em D}(C^*(BT^m))}(DJ(S)) = 
\text{\em sup}\{ i | \text{\em Tor}^{H^*(BT^m)}_{-i,*}(\K[S], \K)\neq 0\}+1.
$
\end{prop}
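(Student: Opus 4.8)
The plan is to show that the inclusion $i : DJ(S) \to BT^m$ is a $\K$-formalizable map, so that Corollary \ref{cor:fibration} applies to the Borel fibration ${\mathcal Z}_S \to ET^m\times_{T^m}{\mathcal Z}_S \to BT^m$, and then to sharpen the resulting inequality to an equality by comparing with a semi-free filtration whose subquotient count is exactly governed by the $\mathrm{Tor}$-length. First I would recall that $H^*(BT^m;\K)=\K[t_1,\dots,t_m]$ and $H^*(DJ(S);\K)=\K[S]$ are polynomial algebras (well, $BT^m$ is; $DJ(S)$ need not be, but the relevant formality condition is on the \emph{target}, so only $(P_1)$ for $BT^m$ and the general formality setup matter here). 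Since $BT^m$ is formal over any field and the constant map $\iota:*\to BT^m$ is covered by the augmentation of the polynomial model, the pair $(i,\iota)$ — or rather the pair $(p,\iota)$ after identifying $p$ with $i$ via the Buchstaber--Panov retract $j$ — is relatively $\K$-formalizable; this is essentially the content used in Example \ref{ex:examples}(i) and the argument in the proof of Corollary \ref{cor:fibration}.

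Next I would feed this into Corollary \ref{cor:fibration} with $E=ET^m\times_{T^m}{\mathcal Z}_S$, $B=BT^m$, $m$ equal to the number of polynomial generators, and $q=p$. Since $p$ has a section (take the section of the Borel fibration, or note $p^*$ is injective because $H^*(p)$ is split as $\K[S]$ is a $\K[t_1,\dots,t_m]$-algebra quotient), one gets $\mathrm{Ker}\,p^*\cap QH^*(BT^m)=0$, so Corollary \ref{cor:fibration} only yields the crude bound $\mathrm{level}\le m+1$. That is not sharp enough, so instead I would go back to Theorem \ref{thm:level_formal}, which identifies $\mathrm{level}_{\D(C^*(BT^m))}(DJ(S))$ with $\mathrm{level}_{\D(\K[S])}(\K[S]\otimes^{\mathbb L}_{H^*(BT^m)}\K[S])$ — wait, more carefully: the fibre square here has corner $DJ(S)\simeq ET^m\times_{T^m}{\mathcal Z}_S$ sitting over $X=DJ(S)$ mapping by $i$ into $B=BT^m$, with $E=ET^m\times_{T^m}{\mathcal Z}_S$ over $B$ via $p$; but the cleaner route is to use the path-fibration square as in Corollary \ref{cor:fibration} so that $E=PBT^m$ has $H^*(E)=\K$, giving $\mathrm{level}_{\D(C^*(DJ(S)))}(\text{hofib})$. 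Actually the statement here computes the level of $DJ(S)$ \emph{over} $BT^m$, i.e. with $X=*$-type role reversed: we want $\mathrm{level}_{\D(C^*(BT^m))}(C^*(DJ(S)))$ directly, which by \cite[Proposition 3.4]{ABIM}-type reasoning equals $\mathrm{level}_{\D(H^*(BT^m))}(\K[S])$ since $DJ(S)$ and $BT^m$ are formal; I would establish this formality-reduction step first.

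Having reduced to computing $\mathrm{level}_{\D(\K[t_1,\dots,t_m])}(\K[S])$, the heart of the argument is: the minimal free resolution of $\K[S]$ over the polynomial ring $\K[t_1,\dots,t_m]$ has length $\ell:=\sup\{i\mid \mathrm{Tor}^{H^*(BT^m)}_{-i,*}(\K[S],\K)\ne0\}$, and a minimal free resolution of length $\ell$ furnishes a finite semi-free filtration of class $\ell$, hence by Theorem \ref{thm:ABIM-I} the level is $\le \ell+1$. For the reverse inequality I would invoke the minimality of the resolution together with a result of the form \cite[Corollary 5.7]{ABIM} (as used in Proposition \ref{prop:homotopy_fibre}), or more directly the fact that over a polynomial ring the level of a module coincides with its projective dimension plus one when the module has a finite minimal free resolution, which is exactly $\ell+1$. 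The main obstacle I expect is precisely this lower bound $\ell+1\le\mathrm{level}$: one must rule out that $\K[S]$ could be built in fewer triangle-steps than its projective dimension suggests, which requires the minimality/gap argument from \cite{ABIM} rather than a naive filtration-length count; I would handle it by citing \cite[Corollary 5.7]{ABIM} applied to the $\K[t_1,\dots,t_m]$-module $\K[S]$ and noting $\mathrm{pd}_{\K[t_1,\dots,t_m]}\K[S]=\ell$ by the standard identification of projective dimension with the top nonvanishing $\mathrm{Tor}$.
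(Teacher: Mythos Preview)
Your eventual strategy --- reduce via formality of the map $i:DJ(S)\to BT^m$ to computing $\text{level}_{\D(\K[t_1,\dots,t_m])}(\K[S])$, then identify this with $\text{pd}_{\K[t_1,\dots,t_m]}\K[S]+1$ --- is exactly what the paper does, packaged there as Proposition~\ref{prop:pd}. The detours through Corollary~\ref{cor:fibration} and the various fibre squares are irrelevant: the statement concerns $C^*(DJ(S))$ as a $C^*(BT^m)$-module directly, and once $(i,\text{id}_{BT^m})$ is known to be relatively $\K$-formalizable, Theorem~\ref{thm:level_formal} plus Proposition~\ref{prop:pd} finish immediately.

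Two points in your outline are genuine gaps. First, ``$DJ(S)$ and $BT^m$ are formal'' does not by itself give the reduction you want; you need the \emph{map} $i$ to be $\K$-formalizable, i.e.\ the formality zig-zags must be compatible with $i$. The paper obtains this from Notbohm--Ray \cite[Theorem~4.8]{N-R}, which asserts that the quasi-isomorphisms connecting $C^*(DJ(S);\K)$ to $H^*(DJ(S);\K)$ are natural; your appeal to ``\cite[Proposition~3.4]{ABIM}-type reasoning'' only yields an inequality. Second, \cite[Corollary~5.7]{ABIM} as used in Proposition~\ref{prop:homotopy_fibre}(2) computes the level of the residue field $\K$ over a polynomial ring, not of an arbitrary finitely generated module like $\K[S]$; it does not directly give $\text{pd}+1\le\text{level}$. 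The paper instead uses Lemma~\ref{lem:app2} (the Krause--Kussin argument \cite{Kr-Ku}) for the lower bound and Lemma~\ref{lem:app1} for the upper bound, and this is precisely the content of Proposition~\ref{prop:pd}.
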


This result is proved by using formality of the Davis-Januszkiewicz
space and the following proposition. 
The proof is postponed to Section 7. 

\begin{prop} {\em (}cf. \cite[Corollary 4.10]{ABIM}{\em ) } 
\label{prop:pd} Let $p : E \to B$ be a fibration. 
If $p$ is $\K$-formalizable, then 
\begin{eqnarray*}
\text{\em level}_{\text{\em D}(C^*(B; \K))}(E)
&\!\!=\!\!& \text{\em pd}_{H^*(B; \K)}(H^*(E; \K))+1 \\
&\!\!=\!\!& \text{\em sup}\{ i | \text{\em Tor}^{H^*(B;
\K)}_{-i,*}(H^*(E; \K), \K)\neq 0\}+1. 
\end{eqnarray*}
\end{prop}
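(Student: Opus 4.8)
The plan is to reduce the statement to a purely algebraic fact about the $\h{B}$-module $\h{E}$ and then invoke a graded version of \cite[Corollary 4.10]{ABIM}. \emph{Step 1 (topology $\Rightarrow$ algebra).} Since $p$ is $\K$-formalizable, the pair $(p,\text{id}_B)$ is relatively $\K$-formalizable: the diagram witnessing $\K$-formalizability of $p$ already supplies the two rows for $E$ and $B$, and one completes it to the required three-row diagram by repeating the middle ($B$-)row with identity vertical maps. Apply Theorem \ref{thm:level_formal} to the trivial pull-back square with $\varphi=\text{id}_B$, whose corner is $E$ itself; using that $-\otimes^{{\mathbb L}}_{\h{B}}\h{B}$ is the identity on $\D(\h{B})$ we obtain
$$
\text{level}_{\D(C^*(B;\K))}(E) = \text{level}_{\D(\h{B})}(\h{E}\otimes^{{\mathbb L}}_{\h{B}}\h{B}) = \text{level}_{\D(\h{B})}(\h{E}).
$$

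\emph{Step 2 (the algebraic identity).} Write $R=\h{B}$, a connected graded algebra with residue field $\K$, and $M=\h{E}$, a degreewise finite graded $R$-module. The identity $\text{pd}_R M = \sup\{i \mid \text{Tor}^R_{-i,*}(M,\K)\neq 0\}$ is standard: $M$ admits a minimal free resolution over $R$, graded Nakayama shows it is of minimal length (equal to $\text{pd}_R M$) and that over $R$ projective resolutions may be taken free, and this resolution simultaneously computes both invariants. For $\text{level}_{\D(R)}(M) = \text{pd}_R M + 1$ I would argue both inequalities. Upper bound: if $d=\text{pd}_R M<\infty$, roll a length-$d$ free resolution $0 \to P_d \to \cdots \to P_0 \to M \to 0$ into a DG $R$-module quasi-isomorphic to $M$ carrying a semi-free filtration of class $\leq d$; when the $P_i$ may be taken finitely generated this filtration is finite, so Theorem \ref{thm:ABIM-I} gives $\text{level}_{\D(R)}(M)\leq d+1$. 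Lower bound: if $\text{level}_{\D(R)}(M)\leq n$, Theorem \ref{thm:ABIM-I} exhibits $M$ as a retract in $\D(R)$ of a DG $R$-module $N$ with a finite semi-free filtration of class $\leq n-1$; since $R$ has zero differential this filtration presents $N$ as a bounded complex of free $R$-modules concentrated in $n$ consecutive homological degrees, so $\text{Ext}^i_R(N,-)$ vanishes for $i\geq n$; the same then holds for the retract $M$, whence $\text{pd}_R M\leq n-1$. (If $\text{pd}_R M=\infty$ both sides are $\infty$.) Combined with Step 1 this proves the Proposition.

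\emph{Main obstacle.} The delicate point is the finiteness bookkeeping in the upper bound of Step 2: Theorem \ref{thm:ABIM-I} speaks of \emph{finite} semi-free filtrations, so one needs $M=\h{E}$ to be finitely generated over $R=\h{B}$ — automatic when $R$ is Noetherian, and in particular in the intended application to Davis-Januszkiewicz spaces, where $R=H^*(BT^m;\K)$ is polynomial and $M=\K[S]$ is even cyclic — and one must check that the $\text{Ext}$-vanishing estimate for a finite semi-free DG module is genuinely available in the graded setting with zero differential. The diagram chase of Step 1 and the minimal-resolution statement are routine by comparison. Feeding formality of $DJ(S)$ together with $H^*(DJ(S);\K)\cong\K[S]$ into the Proposition then yields Proposition \ref{prop:DJ}.
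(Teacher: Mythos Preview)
Your Step 1 and the upper bound in Step 2 are exactly what the paper does: the reduction to $\D(\h{B})$ via Theorem \ref{thm:level_formal} applied to the pull-back along $\text{id}_B$ is the paper's argument verbatim, and the upper bound is the content of Lemma \ref{lem:app1}, carrying precisely the finiteness hypothesis you flag as the main obstacle.

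The gap is in your lower-bound argument. You assert that over $R$ with zero differential a finite semi-free filtration of class $\leq n-1$ ``presents $N$ as a bounded complex of free $R$-modules concentrated in $n$ consecutive homological degrees,'' and then read off $\text{Ext}^i_R(N,-)=0$ for $i\geq n$. But in this paper $\D(R)$ is the derived category of DG $R$-modules, which carry only a single (total) grading: the subquotients $F^k/F^{k-1}$ are finite sums of \emph{arbitrary} shifts $\Sigma^{l_{k,i}}R$, and the filtration index $k$ is not a homological degree that controls $\text{Ext}$. Concretely, $\text{Hom}_{\D(R)}(\Sigma^{l}R,\Sigma^i L)\cong L^{i-l}$ need not vanish for large $i$, so the long exact sequences attached to the filtration give no uniform Ext-vanishing, and nothing passes to the retract $M$. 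Your argument would be valid in the bigraded derived category of complexes of graded $R$-modules, but that is not the category in play here.

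The paper does not attempt this route. Its Lemma \ref{lem:app2} simply invokes \cite[Lemma 2.4]{Kr-Ku}, whose proof is a Ghost-Lemma argument (the ingredients are recorded here as Lemma \ref{lem:Krause-Kussin} and Proposition \ref{prop:ghost}): from a minimal free resolution of $M$ one produces a composite of $\text{pd}_R M$ ghosts out of $M$ that is nonzero in $\D(R)$, forcing $\text{level}_{\D(R)}(M)\geq \text{pd}_R M+1$. That is the argument you need in place of the Ext-vanishing step.
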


\medskip
\noindent
{\it Proof of Proposition \ref{prop:DJ}.} 
The result \cite[Theorem 4.8]{N-R} due to Notbohm and Ray implies that 
the cochain algebra $C^*(DJ(S); \K)$ is connected to the cohomology 
$\h{DJ(K)}$ with natural quasi-isomorphisms. Therefore, by applying the
lifting Lemma, we have a homotopy commutative diagram 
$$
\xymatrix@C25pt@R15pt{
\h{DJ(S)}  & TV_{DJ(S)} \ar[l]_(0.4){\simeq} 
\ar[r]^(0.4){\simeq} & C^*(DJ(S); \K)  \\
\h{BT^m} \ar[u]^{H^*(i)} 
& TV_{BT^m}  \ar[l]^(0.4){\simeq} \ar[r]_(0.4){\simeq} 
\ar[u]_{} & C^*(BT^m; \K) \ar[u]_{C^*(q)} 
}
$$
in which horizontal arrows are quasi-isomorphisms. 
Thus it follows that the pair $(i, id_{BT^m})$ is relatively $\K$-formalizable. 
The result follows from Proposition \ref{prop:pd}. 
\hfill\qed

\begin{rem}
Let $\omega$ be a subset of $[m]$ and $S_\omega$ the full subcomplex of a
simplicial complex $S$ defined by 
$S_\omega=\{ \sigma \in S \ | \ \sigma \subseteq \omega \}$. 
Hochster's result asserts that 
$$
\text{Tor}^{\K[t_1, .., t_m]}_{-i}(\K[S], \K)\cong 
\bigoplus_{w\subseteq [m]} 
\widetilde{H}^{|\omega|-1-i}(S_\omega; \K), 
$$
where $\widetilde{H}^{-1}(\phi)=\K$; see \cite[Theorem 5.1]{Pa}. 
Thus we have
$$
\text{level}_{\D(C^*(BT^m))}(DJ(S)) = 
\text{sup}\{i \ | \ \oplus_{w\subseteq [m]} 
\widetilde{H}^{|\omega|-1-i}(S_\omega; \K)\neq 0 \} +1. 
$$
\end{rem}

\medskip
We next deal with the level of a path space which fits into 
the fibre square ${\mathcal F}_2$ mentioned in the introduction.

Let $k$ be an integer and $\varphi_k : S^4 \to BSU(n)$ 
a representative of the element $k$ in $\pi_4(BSU(n))\cong {\mathbb Z}$.    
Let $\Delta : S^4 \to S^4\times S^4$ be the diagonal 
map. We have a fibre square of the form 
$$
\xymatrix@C25pt@R15pt{
B_{k}^I \ar[r] \ar[d] & BSU(n)^I 
\ar[d]^{\e_0\times \e_1} \\
S^4  \ar[r]_(0.3){(1\times\varphi_k)\Delta}  & BSU(n)\times BSU(n). 
}
$$

\begin{prop} Let $\K$ be a field of characteristic $p$. Then one has 
$$
\text{\em level}_{\text{\em D}(C^*(S^4))}(B_{k}^I)=
\left\{
\begin{array}{l}
2  \ \ \text{if} \ \  1-k \ \text{is not divisible by} \ p \\
1  \ \ \text{otherwise}.  
\end{array}
\right.
$$  
\end{prop}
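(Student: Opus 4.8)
The plan is to apply Theorem \ref{thm:main_new}(ii) to the fibre square at hand, with $E=BSU(n)^I$, $q=\e_0\times\e_1$, $B=BSU(n)\times BSU(n)$, $\varphi=(1\times\varphi_k)\Delta$, and $B'=BSU(n)$. First I would note that $q : BSU(n)^I \to BSU(n)\times BSU(n)$ is a fibration fitting into the required homotopy commutative square: $BSU(n)^I \simeq BSU(n)$ via the evaluation $\e_{1/2}$ (or any $\e_t$), and under this equivalence $q$ corresponds to the diagonal $\Delta : BSU(n) \to BSU(n)\times BSU(n)$, with $h = \mathrm{id}$. Since $H^*(BSU(n);\K) \cong \K[c_2,\dots,c_n]$ is a polynomial algebra on $m = n-1$ generators, the hypothesis on $H^*(B';\K)$ is met. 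To invoke the theorem I must also verify that the pair $(q,\varphi)$ is relatively $\K$-formalizable; here I would observe that $q$ has a section (the constant-path inclusion $BSU(n) \to BSU(n)^I$), so $q$ satisfies condition $(P_2)$ by Example \ref{ex:examples}(ii), and likewise $\varphi$ factors through maps of formal spaces with polynomial cohomology, so by the discussion following the statement of $(P_1)$--$(P_3)$ the pair is relatively $\K$-formalizable (the spaces $S^4$, $BSU(n)$, $BSU(n)\times BSU(n)$ are all $\K$-formal).

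Next I would compute $\Gamma = \mathrm{Ker}(\Delta^*|_{QH^*(B'\times B')}) \cap \mathrm{Ker}(h\varphi)^*$. Write $c_i\otimes 1$ and $1\otimes c_i$ for the generators of $QH^*(BSU(n)\times BSU(n);\K)$, $i=2,\dots,n$. Then $\Delta^*$ sends both $c_i\otimes 1$ and $1\otimes c_i$ to $c_i$, so $\mathrm{Ker}(\Delta^*|_{QH^*})$ has basis $\{c_i\otimes 1 - 1\otimes c_i : i=2,\dots,n\}$, an $(n-1)$-dimensional space. Now $(h\varphi)^* = \varphi^* = \Delta^*(1\times\varphi_k)^*$, and since $\varphi_k^* : H^*(BSU(n);\K) \to H^*(S^4;\K) = \K[x]/(x^2)$ with $|x|=4$ sends $c_2 \mapsto k\cdot x$ (up to sign/normalization) and $c_i \mapsto 0$ for $i\geq 3$, we get $\varphi^*(c_i\otimes 1) = c_i$ pulled back, i.e. $\varphi^*(c_2\otimes 1) = kx$, $\varphi^*(1\otimes c_2) = x$, and $\varphi^*(c_i\otimes 1) = \varphi^*(1\otimes c_i) = 0$ for $i \geq 3$. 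Hence on the element $c_i\otimes 1 - 1\otimes c_i$ the map $\varphi^*$ is zero for $i\geq 3$ and equals $(k-1)x$ for $i=2$. Therefore $\Gamma$ has dimension $n-2$ if $k-1$ is invertible in $\K$ (i.e. $1-k$ not divisible by $p$), and dimension $n-1$ if $p \mid 1-k$.

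Plugging into Theorem \ref{thm:main_new}(ii): $\mathrm{level}_{\D(C^*(S^4))}(B_k^I) \leq m - \dim\Gamma + 1 = (n-1) - \dim\Gamma + 1$, which gives $\leq 2$ in the first case and $\leq 1$ in the second; and the ``in particular'' clause applies in the second case since $\varphi^* \equiv 0$ on the relevant classes after passing to $E_\varphi$, yielding $\mathrm{level} = 1$ when $p\mid 1-k$. For the lower bound in the first case I would use Proposition \ref{prop:main2}: it suffices to exhibit a fibration with total space $B_k^I$ whose Leray-Serre or Eilenberg-Moore spectral sequence does not collapse at $E_2$. The space $B_k^I$ is (homotopy equivalent to) the homotopy pullback, and one checks via the Eilenberg-Moore spectral sequence for the square, or by directly computing $H^*(B_k^I;\K)$ from the Koszul-type model $(E[sz_2,\dots,sz_n]\otimes H^*(S^4), \widetilde\partial)$ appearing in the proof of Theorem \ref{thm:main_new}, that when $k-1$ is invertible the differential $\widetilde\partial(sz_2) = (k-1)x \neq 0$ is nonzero, so $H^*(B_k^I;\K)$ is not a free (hence not projective) $H^*(S^4;\K)$-module; equivalently the associated $\mathrm{Tor}$ is nonzero in negative homological degree, so by Proposition \ref{prop:main2} the level cannot be $1$. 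Combined with the upper bound this forces the level to be exactly $2$.

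The main obstacle I anticipate is the lower bound argument: establishing cleanly that level $\neq 1$ in the non-collapse case. One must be careful that Proposition \ref{prop:main2} is stated for a fibration $F \to E \to B$ with $B$ simply-connected, and here $B = S^4$ is simply-connected, so the relevant fibration is the one obtained by converting the composite $B_k^I \to S^4$ into a fibration — its fibre being the homotopy fibre, which from the model is $\mathrm{Map}_*(S^4,\Omega BSU(n)^2)$-type, but what matters is only the behaviour of the spectral sequence, controlled by the explicit differential $\widetilde\partial$. Alternatively, one can bypass Proposition \ref{prop:main2} entirely and argue directly: level one would force $H^*(B_k^I;\K)$ to be a retract of a free $H^*(S^4;\K)$-module (Theorem \ref{thm:ABIM-I} with $l=1$), hence $H^*(E)\otimes^{\mathbb L}_{H^*(B)}H^*(X)$ to have homology concentrated in $\mathrm{Tor}$-degree $0$, contradicting $\widetilde\partial(sz_2)\neq 0$ which produces a nonzero class in $\mathrm{Tor}_{-1}$. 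This direct route is probably the safest and I would present it that way.
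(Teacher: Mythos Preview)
Your approach is essentially the paper's: apply Theorem \ref{thm:main_new}(ii) for the upper bound via the computation of $\dim\Gamma$, then exclude level one in the invertible case. Two small corrections are in order. First, the ``in particular'' clause of Theorem \ref{thm:main_new} does \emph{not} apply when $p\mid 1-k$: the map $\varphi^*$ is not identically zero (for instance $\varphi^*(c_2\otimes 1)=x\neq 0$, since the first factor of $(1\times\varphi_k)$ is $\varphi_1$). This is harmless, however, because your bound $m-\dim\Gamma+1=(n-1)-(n-1)+1=1$ already forces the level to equal $1$. Second, for the lower bound the paper argues as follows: from $L=1$ one gets (via Theorem \ref{thm:level_formal} and Lemma \ref{lem:pullback}) that $H^*(B_k^I)$ has level one over $H^*(S^4)$, and then Proposition \ref{prop:main2} gives $\dim H^*(B_k^I)=\dim H^*(\Omega BSU(n))\cdot\dim H^*(S^4)$; but the Koszul model $(E[sz_2,\dots,sz_n]\otimes H^*(S^4),\widetilde\partial)$ with $\widetilde\partial\neq 0$ has strictly smaller homology, a contradiction. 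Your ``direct route'' (level one $\Rightarrow$ $H^*(B_k^I)$ projective over $H^*(S^4)$, contradicted by the fact that $x$ acts trivially on the homology of the Koszul model) is a legitimate and slightly cleaner variant of the same idea; just be careful not to conflate the two torsion products in play, namely $\text{Tor}^{H^*(BSU(n)^2)}(H^*(BSU(n)),H^*(S^4))$ (which the Koszul model computes) and $\text{Tor}^{H^*(S^4)}(H^*(B_k^I),\K)$ (whose nonvanishing in negative degree is what actually witnesses non-projectivity).
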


\begin{proof}
We first observe that $H^*(BSU(n))$ is a polynomial algebra 
generated by $n-1$ indecomposable elements with even degree. 
It is immediate that 
$$\text{Ker} \ (\Delta^* |_{QH^*(B\times B)})
\cap \text{Ker}\ \psi_k^* = n-1-1
$$ 
if $1-k$ is not divisible by $p$,  
where $\psi_k=(1\times\varphi_k)\Delta$ and $B=BSU(n)$.  
By virtue of Theorem \ref{thm:main_new}, we have 
$
L:= \text{level}_{\text{D}(C^*(S^4))}(B_{k}^I) \leq 2.
$     
Suppose that $L=1$. 
By Lemma \ref{lem:pullback}, we see that 
$\text{level}_{\D(H^*(S^4))}(H^*((B_{k}^I))=1$. 
Proposition \ref{prop:main2} implies that 
$H^*(B_{k}^I)\cong H^*(\Omega BSU(n))\otimes H^*(S^4)$ as a vector space. 
On the other hand $H^*(B_{k}^I)=
H(H^*(BSU(n))\otimes^{\mathbb L}_{H^*(BSU(n)\times BSU(n))}H^*(S^d))\cong 
H(E[sz_1, ..., sz_{n-1}]\otimes H^*(X), \partial)$. 
Since $\psi_k^*\neq 0$, we have $\partial \neq 0$. This yields that 
$$\dim H^*(B_{k}^I)=\dim H(E[sz_1, ..., sz_{n-1}]
\otimes H^*(S^4), \partial)< \dim H^*(\Omega BSU(n)))\otimes H^*(S^4),$$ 
which is a contradiction. 
Hence $L=2$. 

If $1-k$ is divisible by $p$, then  
$\text{Ker} \ (\Delta^* |_{QH^*(B\times B)})
\cap \text{Ker}\ \psi_k^* = n-1$.  
Theorem \ref{thm:main_new} yields that $L=1$. 
\end{proof}

Let $G$ be a connected Lie group and $H$ a closed  
subgroup of $G$. Then we have a fibration of the form 
$
G/H \stackrel{i}{\to} BH \stackrel{Bj}{\to} BG.  
$
The induced map $i^* : C^*(BH; \K) \to C^*(G/H; \K)$ makes 
$C^*(G/H; \K)$ a DG module over $C^*(BH; \K)$. 

\begin{prop}
Suppose that $H^*(G; \K)$ and $H^*(H; \K)$ are
polynomial algebras with generators of even dimensions. Then 
$$
\text{\em level}_{\text{\em D}(C^*(BH; \K))}(G/H)=\dim
QH^*(BH; \K) +1. 
$$ 
\end{prop}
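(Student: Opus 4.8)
The plan is to recognize that the fibration $G/H \to BH \xrightarrow{Bj} BG$ fits the framework of Corollary \ref{cor:fibration} and then compute the relevant kernel. First I would check that $Bj : BH \to BG$ is $\K$-formalizable. By Example \ref{ex:examples}(i), the inclusion $j : H \to G$ induces a map of classifying spaces satisfying a suitable formality condition; more directly, since $H^*(BG;\K)$ and $H^*(BH;\K)$ are polynomial algebras concentrated in even degrees, the operation $Sq_1$ vanishes for degree reasons when $\mathrm{char}\,\K = 2$, so the pair $(Bj, \iota)$ for a constant map $\iota : * \to BG$ satisfies condition $(P_1)$ and hence $Bj$ is $\K$-formalizable. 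The target $BG$ has polynomial cohomology on $m := \dim QH^*(BG;\K)$ generators, so Corollary \ref{cor:fibration} applies to the fibre $G/H$.

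The key computation is then $\dim(\mathrm{Ker}\, (Bj)^* \cap QH^*(BG;\K))$. Here I would use that $H^*(G;\K)$ and $H^*(H;\K)$ are exterior algebras (being the polynomial-cohomology hypothesis on $G$, $H$ together with the classical fact that a compact Lie group with polynomial-cohomology classifying space — equivalently, no torsion obstruction at $\K$ — has exterior cohomology on odd-degree generators transgressing to the polynomial generators of the classifying space). Write $H^*(BG;\K) = \K[x_1,\dots,x_m]$ and $H^*(BH;\K) = \K[y_1,\dots,y_r]$ where $r = \dim QH^*(BH;\K)$. Corollary \ref{cor:fibration} gives
$$
\text{level}_{\text{D}(C^*(BH;\K))}(G/H) \leq m - \dim\Gamma + 1,
$$
where $\Gamma = \mathrm{Ker}\,(Bj)^* \cap QH^*(BG;\K)$. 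Since $G/H$ has finite-dimensional cohomology (both $H^*(G)$ and $H^*(H)$ being finite-dimensional exterior algebras), the Eilenberg--Moore / Koszul computation shows $(Bj)^* : \K[x_1,\dots,x_m] \to \K[y_1,\dots,y_r]$ must be surjective in the sense that $G/H$ has the right Euler-characteristic behaviour; in fact one gets $m - \dim\Gamma = r$, i.e. $\dim\Gamma = m - r$, so the upper bound reads $\text{level} \leq r + 1 = \dim QH^*(BH;\K) + 1$.

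For the reverse inequality I would invoke Proposition \ref{prop:homotopy_fibre}(2): the fibre $G/H$ is the homotopy fibre of $Bj : BH \to BG$, the cohomology $H^*(BH;\K)$ is a polynomial algebra, and $\dim H^*(G/H;\K) < \infty$; hence $\text{level}_{\text{D}(C^*(BH;\K))}(G/H) = \dim QH^*(BH;\K) + 1$ directly — provided one first checks the hypotheses of Proposition \ref{prop:homotopy_fibre}, namely that condition (i) of Theorem \ref{thm:main_new} holds (with $E = BG$, $B = BG$, $\varphi = Bj$, $q = \mathrm{id}$, so $\Lambda = \K[\Gamma]$ and $\mathrm{Tor}^\Lambda(\K,\K)$ is finite-dimensional since $\Gamma$ is a finite set of polynomial generators) and that $\mathrm{Tor}^\Lambda_*(H^*(BG),\K)$ is a trivial module, which holds because $H^*(BG) = \K[x_1,\dots,x_m]$ is free over $\Lambda = \K[\Gamma]$ so the Tor is just $\K[x_j : x_j \notin \Gamma]$ — this is not quite a trivial module, so in fact I would reduce the fibre square to the path-loop square $G/H \to PBG \to BG$ over $X = BH$, where $E = PBG$ is contractible, $\Lambda = \K[\mathrm{Ker}(Bj)^* \cap QH^*(BG)]$, and $\mathrm{Tor}^\Lambda(H^*(PBG),\K) = \mathrm{Tor}^\Lambda(\K,\K) = E[\text{generators of }\Gamma]$ is genuinely trivial. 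Then Proposition \ref{prop:homotopy_fibre}(1) gives the equality with $\text{level}(F_{Bj}) = \text{level}(G/H)$ and part (2) finishes it. The main obstacle is bookkeeping: identifying $\mathrm{Ker}(Bj)^* \cap QH^*(BG)$ precisely and confirming the triviality-of-module hypothesis so that the two halves of Proposition \ref{prop:homotopy_fibre} both apply cleanly.
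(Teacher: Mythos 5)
Your proof is correct and, in its decisive second half, is exactly the paper's argument: the paper simply observes that $Bj$ is $\K$-formalizable (citing Munkholm; your $(P_1)$ argument gives the same thing) and then applies Proposition \ref{prop:homotopy_fibre}(2) to the path--loop fibre square over $BH$, where $E=PBG$ is contractible so condition (i) holds trivially and $\dim H^*(G/H;\K)<\infty$ since $G/H$ is a compact manifold. The first half of your write-up (the upper bound via Corollary \ref{cor:fibration} together with the claim $m-\dim\Gamma=r$, which would itself require justification) is superfluous, since Proposition \ref{prop:homotopy_fibre}(2) already yields the equality outright.
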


\begin{proof}
The induced map $Bj$ is $\K$-formalizable; see \cite[Section 7]{Mu}. 
Thus Proposition \ref{prop:homotopy_fibre} yields the result. 
\end{proof}

To prove Theorem \ref{thm:Borel}, we invoke the following lemma. 

\begin{lem}{\em (cf. }\cite[Proposition 3.4(1)]{ABIM}{\em )}
\label{lem:key_Borel}
Let $\psi : A \to B$ be a morphism of DG algebras and $M$  a DG module
over $B$. For a DG module $N$ over $B$, 
let $\psi_*N$ denote the DG module over $A$ via
$\psi$. Suppose that $\psi_*B$ is a finite direct sum of shifts of
$A$. Then 
$$
\text{\em level}_{\text{\em D}(A)}(\psi_*M)\leq 
\text{\em level}_{\text{\em D}(B)}(M). 
$$  
\end{lem}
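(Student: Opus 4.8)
The plan is to reduce everything to the semi-free characterization of the level in Theorem \ref{thm:ABIM-I}. If $\text{level}_{\D(B)}(M)=\infty$ there is nothing to prove, so suppose $\text{level}_{\D(B)}(M)=l<\infty$. By Theorem \ref{thm:ABIM-I}, $M$ is a retract in $\D(B)$ of a DG $B$-module $F$ admitting a finite semi-free filtration $\{F^n\}_{n\in{\mathbb Z}}$ of class at most $l-1$; thus $F^{-1}=0$, $F^{l-1}=F$, $F^n\subseteq F^{n+1}$, $\bigcup_{n\geq 0}F^n=F$, and each subquotient $F^n/F^{n-1}$ is a finite direct sum of shifts of $B$.

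Next I would record that restriction of scalars along $\psi$ gives a triangulated additive functor $\psi_*:\D(B)\to\D(A)$: it leaves the underlying complex untouched, hence preserves quasi-isomorphisms (so it descends to derived categories), mapping cones, and finite direct sums and shifts. In particular $\psi_* M$ is a retract of $\psi_* F$ in $\D(A)$, since $\psi_*$ carries the splitting idempotent of $M$ on $F$ to a splitting idempotent of $\psi_* M$ on $\psi_* F$.

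The heart of the argument is then to check that $\{\psi_* F^n\}_{n\in{\mathbb Z}}$ is a finite semi-free filtration of $\psi_* F$ over $A$ of class at most $l-1$. The inclusions, exhaustion, $\psi_* F^{-1}=0$ and $\psi_* F^{l-1}=\psi_* F$ are clear since $\psi_*$ is the identity on underlying modules. Because restriction of scalars is exact on underlying modules, $\psi_* F^n/\psi_* F^{n-1}\cong \psi_*(F^n/F^{n-1})$. Writing $F^n/F^{n-1}\cong\bigoplus_{\text{finite}}\Sigma^{a_j}B$ and using that $\psi_*$ commutes with shifts and finite direct sums together with the hypothesis that $\psi_* B$ is a finite direct sum of shifts of $A$, we obtain that $\psi_*(F^n/F^{n-1})$ is a finite direct sum of shifts of $A$. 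Hence $\psi_* F$ admits a finite semi-free filtration of class at most $l-1$, and applying Theorem \ref{thm:ABIM-I} in the other direction yields $\text{level}_{\D(A)}(\psi_* M)\leq (l-1)+1=l=\text{level}_{\D(B)}(M)$.

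No step here is genuinely deep; the one place where the standing hypothesis on $\psi_* B$ is used is precisely the verification that restricting a semi-free filtration along $\psi$ keeps each subquotient a \emph{finite} direct sum of shifts of $A$, which is exactly what preserves the finiteness needed to reinvoke Theorem \ref{thm:ABIM-I}. The only points requiring a line of care are that $\psi_*$ genuinely descends to a triangulated functor on derived categories and that it commutes with the relevant direct sums and shifts; both are formal.
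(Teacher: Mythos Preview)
Your proof is correct and follows essentially the same route as the paper: use Theorem \ref{thm:ABIM-I} to pass to a finite semi-free filtration, observe that restriction along $\psi$ turns each subquotient into a finite direct sum of shifts of $A$ by the hypothesis on $\psi_*B$, and conclude. You are in fact more careful than the paper's version, which elides the retract step and simply writes ``$M$ admits a finite semi-free filtration'' rather than ``$M$ is a retract of such''; your explicit handling of the retract via the triangulated functor $\psi_*$ is the right way to close that gap.
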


\begin{proof}
Put $l=\text{level}_{\text{D}(B)}(M)$. 
It follows from Theorem \ref{thm:ABIM_I} that $M$ admits a finite semi-free
filtration $\{F^n\}_{0\leq n \leq l-1}$ of class at most $l-1$. By
definition, $F^n/F^{n-1}$ is isomorphic to a finite direct sum of
shifts of $B$. Therefore $\psi_*F^n/\psi_*F^{n-1}$ is isomorphic to a
direct sum of shifts of $A$ since so is $\psi_*B$. This completes the
proof. 
\end{proof}

\medskip
\noindent
{\it Proof of Theorem \ref{thm:Borel}.}
We first observe that $EG\times_HG/K$ and $EG\times_{T_H}G/T_K$ fit into
the fibre squares
$$
\xymatrix@C20pt@R2pt{
EG\times_HG/K \ar[r]^(0.6){\xi} \ar[dd] & BK \ar[dd]  &  &
             EG\times_{T_H}G/T_K  \ar[r]^(0.6){\xi} \ar[dd] & BT_K 
  \ar[dd] \\
& & \text{and} &   \\ 
BH  \ar[r]  & BG  &  &  BT_H  \ar[r]  & BG,   
}
$$
respectively, where $\xi$ sends $[x, g]$ to $[xg]$; 
see \cite[(2.2)]{K_fibration}. 
Thanks to Theorem \ref{thm:main_new},  
to prove the result, it suffices to show that 
$
\text{level}_{\text{D}(H^*(BH))}
(H^*(BK)\otimes_{H^*(BG)}^{\mathbb L}H^*(BH))$
is equal to 
$\text{level}_{\text{D}(H^*(BT_H))}
(H^*(BT_K)\otimes_{H^*(BG)}^{\mathbb L}H^*(BT_H))$. 
As is known \cite[6.3 Theorem]{Baum},  
$H^*(BT_H)\cong H^*(BH)\otimes H^*(H/T_H)$ as an $H^*(BH)$-module and 
$H^*(BT_K)\cong H^*(BK)\otimes H^*(H/T_K)$ as an $H^*(BK)$-module. 
Observe that these isomorphisms are also morphisms of $H^*(BG)$-modules. 
Thus one has
\begin{eqnarray*}
L_1 \! &:=& \! \text{level}_{\text{D}(H^*(BH))}
(H^*(BT_K)\otimes_{H^*(BG)}^{\mathbb L}H^*(BT_H)) \\ 
\! &=& \!  
\text{level}_{\text{D}(H^*(BH))}
(H^*(K/T_K)\otimes H^*(BK)\otimes_{H^*(BG)}^{\mathbb L}H^*(BT_H) )  \\
\! &=& \!\text{level}_{\text{D}(H^*(BH))}
(H^*(BK)\otimes_{H^*(BG)}^{\mathbb L}H^*(BT_H)) \\
\! &=& \!\text{level}_{\text{D}(H^*(BH))}
(H^*(BK)\otimes_{H^*(BG)}^{\mathbb L}H^*(BH)) \\ 
\! & \geq & \! 
\text{level}_{\text{D}(H^*(BT_H))}
(H^*(BK)\otimes_{H^*(BG)}^{\mathbb L}H^*(BT_H)) \\
\! &=& \!\text{level}_{\text{D}(H^*(BT_H))}
(H^*(BT_K)\otimes_{H^*(BG)}^{\mathbb L}H^*(BT_H)) =:L_2.
\end{eqnarray*}
The second and third equalities follow from 
\cite[Lemma 2.4 (1)(3)]{ABIM}. 
The existence of the exact functor $-\otimes^{\mathbb
L}_{H^*(BH)}H^*(BT_H) : D(H^*(BH)) \to \D(H^*(BT_H))$ yields the 
inequality; see \cite[Lemma 3.4 (1)]{ABIM}.   
In view of Lemma \ref{lem:key_Borel}, we have $L_1 \leq L_2$ and hence 
$L_1 = L_2$. This implies the result. 
\hfill \qed

\section{Proof of Theorem \ref{thm:cat-level} and computational examples}

We first mention that the proof of Theorem \ref{thm:cat-level} depends
heavily on the proof of \cite[Theorem 35.9]{F-H-T} and results due to
Kahl in \cite{Kahl}.   

We prove the first inequality. Let $A$ and $M$ denote the differential graded
algebra $C_*(\Omega Y)$ and the $C_*(\Omega Y)$-module $C_*(F_f)$,
respectively.  
Suppose that $\text{Ecat}_AM =n$. 
Then by definition, there exists a morphism 
$u : C_*(F_f) \to E_nA$ in the homotopy category of 
$\mathcal{DGM}\text{-}A$. Let $(R, d)$ be the Eilenberg-Moore resolution 
of $M$ \cite{F-H-T}\cite{G-M}. Then we have a composite
$$
\xymatrix@C20pt@R2pt{
(R, d) \ar[r]^(0.6){\simeq} & M \ar[rr]^(0.4){AW \circ C_*(\Delta)} & & M\otimes M
\ar[r]^(0.45){1\otimes u} & M\otimes E_nA 
}
$$
in the homotopy category of $\mathcal{DGM}\text{-}A$  
for which $A$ acts diagonally on the target, where 
$AW : C_*(X\times X) \to C_*(X)\otimes C_*(X)$ 
denotes the Alexander-Whitney map. 
Observe that the $A$-module $(R, d)$ has a semi-free filtration. 
Therefore by means of the lifting Lemma, we see that there exists a morphism 
$$
\xymatrix@C20pt@R2pt{
\psi : (R, d) \ar[r] & M \otimes E_nA = M \otimes 
T^{\leq n}(\Sigma\overline{A})\otimes A
}
$$
in $\mathcal{DGM}\text{-}A$. Thus we can proceed the proof of
\cite[Theorem 35.9]{F-H-T} from its Step 4 with the map $\psi$. 
In consequence,  we have the first inequality. 

Before proving the second inequality, we recall the definition of the trivial category in the sense of  Kahl; 
see \cite[Definition 2.1]{Kahl}. 

We call a morphism $f : P \to Q$ in $\mathcal{DGM}\text{-}A$ an elementary cofibration if there exists 
an inclusion $i : X \to Y$ between differential graded vector spaces such that $f $ is a cobase extension of the map 
$i\otimes id_A : X\otimes A  \to Y \otimes A$; that is, the morphism $f$ fits in the pushout diagram 
$$
\xymatrix@C15pt@R20pt{
X\otimes A \ar[d]_{i\otimes id_A} \ar[r] & P \ar[d]^{f} \\
Y\otimes A \ar[r]  &Q 
}
$$
in $\mathcal{DGM}\text{-}A$ for an appropriate morphism 
$X\otimes A \to P$ of supplemented DG modules over $A$. 
We denote an elementary cofibration by $\xymatrix@C10pt{\!\! f : 
P \ \ar@{>->}[r] & Q}$. 

\begin{defn} Let $M$ be an object of $\mathcal{DGM}\text{-}A$.
The {\it trivial category} of $M$, denoted $\text{trivcat}_A M$, is the least integer $n$ for which there exists a sequence 
$\xymatrix@C10pt@R2pt{
P^0 \ \ar@{>->}[r] & \cdots   \ \ar@{>->}[r] & P^n 
} $ of elementary cofibrations such that $P^0$ is a free $A$-module and $P^n$ is isomorphic to $M$ in the homotopy category 
of $\mathcal{DGM}\text{-}A$. If no such integer exists, we set $\text{trivcat}_A M=\infty$.
\end{defn}

\begin{lem} \label{lem:level-trivcat}
Let $M$ be an object in $\mathcal{DGM}\text{-}A$. Then there exists an
object $M'$ in $\mathcal{DGM}\text{-}A$ such that $M$ is a retract of
$M'$ in the homotopy category $\mathcal{DGM}\text{-}A$  and 
$$
\text{\em level}_{D(A)}M -1 \geq \text{\em trivcat}_AM'.  
$$
\end{lem}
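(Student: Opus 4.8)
The statement to be proven links the triangulated-category invariant $\text{level}_{D(A)}M$ to the homotopy-theoretic invariant $\text{trivcat}_A M'$ (for a suitable model $M'$ of $M$). The bridge between the two worlds is Theorem \ref{thm:ABIM-I}: $\text{level}_{D(A)}M \leq l$ means precisely that $M$ is a retract in $D(A)$ of a DG module $M'$ admitting a finite semi-free filtration of class at most $l-1$. So the natural strategy is: set $l := \text{level}_{D(A)}M$, produce such an $M'$ with a finite semi-free filtration $\{F^n\}_{0 \leq n \leq l-1}$, and then verify two things — (a) $M$ is a retract of $M'$ already in the homotopy category of $\mathcal{DGM}\text{-}A$, not merely in $D(A)$; and (b) the semi-free filtration of $M'$ witnesses a chain of elementary cofibrations $P^0 \rightarrowtail \cdots \rightarrowtail P^{l-1}$ with $P^0$ free and $P^{l-1} \cong M'$ in the homotopy category, so that $\text{trivcat}_A M' \leq l-1$.

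For (b), the idea is to exploit the explicit pushout description of semi-free filtrations recorded in Section 3 of the excerpt. There it is shown that if $F/F'$ is a coproduct of shifts of $A$, say $\Sigma(Z \otimes A)$, then $F \cong F' \cup_\xi C(Z\otimes A)$, the pushout of $F' \leftarrow Z\otimes A \xrightarrow{i} C(Z\otimes A)$ along the natural inclusion $i$. Now $C(Z\otimes A) = (I/\K\{[1]\}) \otimes (Z\otimes A)$ is a free DG module over $A$ on the graded vector space $(I/\K\{[1]\})\otimes Z$, and the inclusion $Z\otimes A \hookrightarrow C(Z\otimes A)$ is of the form $j \otimes \mathrm{id}_A$ for an inclusion $j$ of graded vector spaces. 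Hence the map $F' \to F$ is exactly a cobase extension of $j\otimes\mathrm{id}_A$, i.e. an elementary cofibration in Kahl's sense. Applying this to each stage $F^{n-1} \hookrightarrow F^n$ of the semi-free filtration gives a sequence of elementary cofibrations $F^0 \rightarrowtail F^1 \rightarrowtail \cdots \rightarrowtail F^{l-1} = M'$, and $F^0$ is itself a direct sum of shifts of $A$ (since $F^{-1}=0$), hence free. One must be slightly careful that $F^0$ being a possibly-shifted sum of copies of $A$ still counts as "free $A$-module" in the sense of the $\text{trivcat}$ definition; this is a matter of unwinding conventions (a shift $\Sigma^k A$ is the free module on $\Sigma^k \K$), and should be handled by a remark rather than a computation. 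This yields $\text{trivcat}_A M' \leq l-1 = \text{level}_{D(A)}M - 1$, which is the desired inequality.

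For (a), one needs to upgrade the retraction $M \to M' \to M$ from $D(A)$ to the homotopy category $\mathcal{DGM}\text{-}A$. The point is that $M'$, being semi-free, is cofibrant, so morphisms out of $M'$ in $D(A)$ are represented by honest DG module maps up to homotopy; and a retraction in $D(A)$ between a general module and a semi-free one can be realized by actual module maps. Concretely: the composite $M' \xrightarrow{\sim} M' \to M$ (quasi-iso to a semi-free replacement then the $D(A)$-map) and a section $M \to M'$ compose to the identity of $M$ in $D(A)$; replacing $M$ by its own semi-free (Eilenberg-Moore) resolution if necessary, everything becomes a genuine homotopy in $\mathcal{DGM}\text{-}A$. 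This is essentially the standard dictionary between $D(A)$ and the homotopy category of semi-free modules, and I would invoke it citing the model-category structure used implicitly throughout the paper; the main subtlety is bookkeeping the supplemented/augmented structure so that all the maps live in $\mathcal{DGM}\text{-}A$ rather than just in DG $A$-modules.

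**Main obstacle.** I expect the genuine difficulty to be (a) — getting the retraction to live in the homotopy category of \emph{supplemented} DG modules, and making sure the semi-free replacement $M'$ can be chosen compatibly supplemented and finite so that (b) still applies to it. The filtration-to-cofibration translation in (b) is essentially bookkeeping once one has the pushout formulas from Section 3 in hand, but one has to check that each $F^n$ is genuinely a \emph{supplemented} DG $A$-module and that the inclusion maps respect the supplementation, which requires that the semi-free filtration be chosen so that $F^0$ contains the copy of $A$ giving the augmentation — a normalization one should state carefully. The other place requiring care is the passage between "direct sum of shifts of $A$" and "free $A$-module on a graded vector space," which is purely notational but must be made explicit since Kahl's definition of $\text{trivcat}$ is phrased in terms of $X\otimes A$ for $X$ a graded vector space.
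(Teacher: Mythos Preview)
Your outline is essentially the paper's approach: invoke Theorem~\ref{thm:ABIM-I} to produce $M'$ with a finite semi-free filtration $\{F^n\}_{0\le n\le l-1}$, then read off a chain of elementary cofibrations. However, you have the emphasis inverted. You flag (a) as the main obstacle and treat the supplementation of the filtration stages as ``a normalization one should state carefully''; in the paper it is exactly the other way around. Point (a) is handled in one sentence (since $M'$ is connected by quasi-isomorphisms to something of the form $M\oplus N$, it inherits a supplementation), while almost the entire proof is spent forcing each $F^n$ to be supplemented and checking that the resulting inclusions remain elementary cofibrations \emph{in $\mathcal{DGM}\text{-}A$}.

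Concretely: the unit $1\in M'$ need not lie in $F^0$. The paper locates the first index $i$ with $F^{i+1}$ supplemented but $F^i$ not, writes $F^{i+1}\cong F^i\oplus\Sigma(Z^{i+1}\otimes A)$ with $1$ coming from a $\K$-summand of $Z^{i+1}_{-1}$, and replaces $F^s$ by $\widetilde F^s=F^s\oplus A$ for $s\le i$ (and $\widetilde F^s=F^s$ for $s>i$). One must then verify, by three separate explicit pushout diagrams (cases $s<i$, $s=i$, $s>i$), that each $\widetilde F^s\hookrightarrow\widetilde F^{s+1}$ is an elementary cofibration. The critical case $s=i$ is the delicate one: the copy of $A$ adjoined to $F^i$ must be absorbed into the $\Sigma\K\otimes A$ piece of $F^{i+1}$. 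Note also that the base of each cobase extension in the paper is $(Z\oplus\K)\otimes A\to(CZ\oplus\K)\otimes A$, not merely $Z\otimes A\to C(Z\otimes A)$; the extra $\K$ lets the attaching map send $1\mapsto 1$, so that everything lives in $\mathcal{DGM}\text{-}A$ rather than just in DG $A$-modules. Your direct claim that $F^{n-1}\to F^n$ is already an elementary cofibration via the Section~3 pushout is correct over plain DG $A$-modules but not in the supplemented category without this adjustment.

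So your plan is right, but the ``normalization'' you defer is in fact the substance of the argument.
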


\begin{proof}
Suppose that $\text{level}_{D(A)}M =l$. By virtue of Theorem
\ref{thm:ABIM-I}, 
there exists an DG-module $M'$ such that $M$ is a retract of $M'$ in
$\D(A)$ and $M'$ admits a finite semi-free resolution $\{F^n\}_{n\geq -1}$ 
of class at most $l-1$. Since $M$ is supplemented and 
$M'$ is connected to a DG-module of the
form $M\oplus N$ for a DG-module $N$ with quasi-isomorphisms, it
follows that $M'$ is also supplemented. 
We write $M'=\K\oplus \overline{M'}$ for which $d1 =0$ and 
$d(\overline{M'}) \subset \overline{M'}$. 

Suppose that there exists an integer $i$ such that $F^{i+1}$ is
supplemented but not $F^i$. Thus we see that 
$F^{i+1}\cong F^i\oplus \Sigma(Z^{i+1}\otimes A)$ as an 
$A^\natural$-module 
for which $Z^{i+1}$ is a finite dimensional graded vector space endowed
with the trivial differential.  
We may further assume that $\K$ is a direct summand of $Z^{i+1}_{-1}$
and the element $1 \in \K \subset Z^{i+1}$ corresponds to the element 
$1$ in $F^{i+1}$ under the isomorphism mentioned above. 

We shall construct a sequence of elementary cofibrations with $M'$ as
the target. Put $\widetilde{F}^s = F^s$ for $s > i$ and 
 $\widetilde{F}^s = F^s \oplus A$ for $s \leq i$. 
We define $\iota_s : \widetilde{F}^s \to \widetilde{F}^{s+1}$ by 
$\widetilde{\iota}_s = \iota_s$ for $s > i$ and  
$\widetilde{\iota}_s = \iota_s\oplus id$ for $s < i$, where 
$\iota_s : F^s \to F^{s+1}$ denotes the inclusion. 
Moreover define 
$
\widetilde{\iota}_i : 
\widetilde{F}^i =F^i\oplus A \to F^{i+1}\cong F^i\oplus \Sigma(Z^{i+1}\otimes A) 
$
by $\widetilde{\iota}_i(1)=\Sigma 1$ for $1\in A$ and 
$\widetilde{\iota}_i(w)=\iota(w)$ for $w \in F^i$. 
We write $Z^{i+1}=\K\oplus \overline{Z}$. 
Then it follows that 
$$
F^i\oplus \Sigma(Z^{i+1}\otimes A) \cong F^i\oplus (\Sigma \K \otimes A\oplus \Sigma (\overline{Z}\otimes A)) 
    \cong \widetilde{F}^i\oplus \Sigma(\overline{Z}\otimes A). 
$$
Consider the pushout diagram 
$$
\xymatrix@C15pt@R20pt{
(\overline{Z} \oplus \K)\otimes A \ar@{=}[r] & (\overline{Z} \otimes A)\oplus A \ar[r]^\xi  \ar[d]_i & F^i\oplus A = \widetilde{F}^i 
\ar[d] \\
(C\overline{Z} \oplus \K)\otimes A \ar@{=}[r] & C(\overline{Z} \otimes A)\oplus A \ar[r] & 
\widetilde{F}^i\cup_\xi(C(\overline{Z} \otimes A)\oplus A))
}
$$
in $\mathcal{DGM}\text{-}A$ in which $\xi$ is a morphism of DG
$A$-modules defined by 
$
\xi(\overline{z}\otimes a)= d(\Sigma (\overline{z} \otimes a))
-(-1)^{\deg \Sigma\overline{z}}\Sigma \overline{z}\otimes da
= d(\Sigma(\overline{z}\otimes 1))a
$ for 
$\overline{z} \otimes a \in \overline {Z}\otimes A$ and $\xi(a)= a$ for $a \in 0\oplus A$; see Section 1. 
We then see that 
\begin{eqnarray*}
\widetilde{F}^i\cup_\xi(C(\overline{Z} \otimes A)\oplus A))&\! = \!&
F^i\oplus A \oplus C(\overline{Z} \otimes A)\oplus A/(\xi(w)-i(w) ; w \in (\overline{Z} \otimes A)\oplus A)\\
&\cong& \{F^i \oplus C(\overline{Z} \otimes A)/(\xi(w)-i(w) ; w \in (\overline{Z} \otimes A)) \}\oplus A \\
&\cong& F^i \oplus A\oplus \Sigma(\overline{Z} \otimes A) \ \cong \ \widetilde{F}^{i+1}. 
\end{eqnarray*}
Thus the inclusion $\widetilde{F}^i \to \widetilde{F}^{i+1}$ is an
elementary cofibration. 

In the case where $s >i$, there exists  an inclusion $\iota_s : F^s \to F^{s+1}\cong F^s \oplus \Sigma (Z\otimes A)$ 
such that $\iota_s(1) = 1$.  Thus we obtain a pushout diagram 
$$
\xymatrix@C15pt@R20pt{
(Z \oplus \K)\otimes A \ar@{=}[r] & (Z \otimes A)\oplus A \ar[r]^{\xi'}  \ar[d]_i & F^s 
\ar[d] \\
(CZ \oplus \K)\otimes A \ar@{=}[r] & C(Z \otimes A)\oplus A \ar[r] & 
F^s\cup_{\xi'}(C(Z \otimes A)\oplus A))
}
$$
in $\mathcal{DGM}\text{-}A$ in which $\xi'$ is defined by $\xi'(z)= d(\Sigma (z \otimes 1))$ for 
$z \in Z$ and $\xi'(1)= 1$ for $1 \in 0\oplus A$. 
It follows that 
$$
F^s\cup_{\xi'}(C(Z \otimes A)\oplus A))\cong F^s\oplus C(Z\otimes A)/(\xi'(w) - i(w) ; w \in Z\otimes A) \cong F^{s+1}.
$$

In the case where $s < i$,  we obtain a pushout diagram 
$$
\xymatrix@C15pt@R20pt{
(Z \oplus \K)\otimes A \ar@{=}[r] & (Z \otimes A)\oplus A \ar[r]^{\zeta \oplus id_A}  \ar[d]_i & F^s\oplus A  = \widetilde{F}^s
\ar[d] \\
(CZ \oplus \K)\otimes A \ar@{=}[r] & C(Z \otimes A)\oplus A \ar[r] & 
(F^s \oplus A)\cup_{\zeta\oplus id_A}(C(Z \otimes A)\oplus A))
}
$$
in $\mathcal{DGM}\text{-}A$ in which $\zeta$ is defined by $\zeta(z)= d(\Sigma (z \otimes 1))$ for 
$z \in Z$. 
It follows that 
$$
(F^s \oplus A)\cup_{\zeta\oplus id_A}(C(Z \otimes A)\oplus A))
\cong \{F^s \cup_{\zeta}C(Z \otimes A)\}\oplus A 
\cong F^{s+1}\oplus A = \widetilde{F}^{s+1}.
$$
The above argument enables us to obtain a sequence of elementary cofibrations 
$$ 
\xymatrix@C12pt@R2pt{
\widetilde{F}^{0} \  \ar@{>->}[r] & \widetilde{F}^{2} \ \ar@{>->}[r] &
\cdots \ \ar@{>->}[r] & \widetilde{F}^{i} \ \ar@{>->}[r] & \widetilde{F}^{i+1} 
\ \ar@{>->}[r] & \cdots \ \ar@{>->}[r] & \widetilde{F}^{l-1}=M'. 
}
$$ 
This yields that $\text{trivcat}_AM' \leq l-1$.
\end{proof}

We are now ready to prove the second inequality. Let $M'$ be the supplemented DG-module described in 
Lemma \ref{lem:level-trivcat}. The result \cite[Theorem 2.6]{Kahl} allows us to conclude that 
$n:=\text{trivcat}_A M' \geq \text{Ecat}_AM'$. Thus there exists a morphism $M' \to E_nM$ in the homotopy category 
$\text{Ho}(\mathcal{DGM}\text{-}A)$ . Since $M$ is a retract of $M'$, we have a morphism $M \to M'$ in  
$\text{Ho}(\mathcal{DGM}\text{-}A)$. This implies that  
$\text{Ecat}_AM' \geq \text{Ecat}_AM$. We have the second inequality. 

The result \cite[Lemma 6.5]{Schmidt} implies that 
$
\text{level}_{\D(A)}M \leq \dim H(M\otimes_A^{\mathbb L} \K). 
$
In our case, we have $H(M\otimes_A^{\mathbb L} \K)=H(C_*(F_f)\otimes _{C_*(\Omega Y)}^{\mathbb L} \K) \cong H_*(X; \K)$. 
The isomorphism follows from the Eilenberg-Moore theorem; see for example \cite[Theorem 3.9]{G-M}. 
This enables us to obtain the last inequality.  

The latter half of the assertion follows from Lemma \ref{lem:app1}
below.  

\medskip
\noindent
{\it Proof of Corollary \ref{cor:R-case}}. 
As described in Section 2, the E-category in $\mathcal{DGM}-C_*(\Omega X)$ coincides with the M-category; that is, 
we have 
$\text{Ecat}_{C_*(\Omega X)}\Q=\text{Mcat} (TV, d)$, where 
the right hand side denotes the M-category of a TV-model for $X$ in the sense of Halperin and Lemaire \cite{H-L}.
It follows from \cite[Theorem 3.3 (ii)]{H-L} and the main theorem in 
\cite{Hess} that  
$\text{Mcat} (TV, d)=\text{cat} X$. We have the result. 
\hfill \qed

\begin{ex}
\label{ex:5.1}
Let $X$ be a simply-connected space whose cohomology with coefficients in $\K$ is generated by a single element $x$. 
Suppose that $x^l\neq 0$ and $x^{l+1}=0$.  
We compute $\text{level}_{\D(C_*(\Omega X ; \K))}\K$.  
Theorem \ref{thm:cat-level} yields that 
$$
\text{Mcat}(TV) = 
\text{Ecat}_{C_*(\Omega X ; \K)}\K \leq \text{level}_{\D(C_*(\Omega X ; \K))}\K-1 
\leq \dim H^*(X; \K)-1, 
$$  
where $(TV, d)$ is a TV-model for $X$. 
The result \cite[Proposition 1.5]{H-L} implies that 
the cup length $c(X)$ of $H^*(X; \K)$ is a lower bound of the M-category.
Thus we have $\text{level}_{C_*(\Omega X ; \K)}\K=l+1$. 
\end{ex}

\begin{ex}
\label{ex:5.2}
We next consider $\text{level}_{C_*(\Omega X ; \Q)}\Q$ for 
a simply-connected rational H-space $X$ with 
$\dim H^*(X; Q)<\infty$. 
It follows that $H^*(X; \Q)$ is isomorphic, as a Hopf algebra,  
to the exterior algebra generated by primitive elements with odd degrees, say 
$H^*(X; \Q)=\wedge (x_1, ..., x_l)$. We see that 
$H_*(\Omega X; \Q)\cong \Q[y_1, ..., y_l]$ as an algebra, where 
$\deg y_i = \deg x_i-1$. Theorem \ref{thm:cat-level} and 
Corollary \ref{cor:R-case} yield that 
$$
l=c(X) \leq \text{cat} X =\text{level}_{\D(C_*(\Omega X ; \Q))}\Q -1 
\leq \text{pd}_{H_*(\Omega X)}\Q = l.
$$
We have $\text{cat}  X + 1=\text{level}_{\D(C_*(\Omega Y ; \Q))}\Q = l+1$. 
\end{ex}

\section{Lower and upper bounds of the levels}

In this section, we prove Proposition \ref{prop:pd}. 
To this end, we need lemmas. 

Throughout this section, it is assumed that a DGA $A$ is non-negative;
that is, $A^i=0$ for $i < 0$.  

\begin{lem}{\em (}cf. \cite[Theorem 5.5]{ABIM}{\em )}
\label{lem:app1} Let $A$ be a non-negatively graded DGA over a field
$\K$ with $H^0(A)=\K$ and 
$M$ a DG module over A. Suppose that 
there exists an integer $N$ such that $H^j(M)=0$ for $j < N$. 
Assume further that 
$\text{\em Tor}_{-i}^{H(A)}(H(M), \K)$ is of finite dimension for any 
$i \leq 0$. Then one 
has 
$$\text{\em level}_{\text{\em D}(A)}(M) \leq 
\text{\em pd}_{H(A)}(H(M))+1 = 
\text{\em sup}\{ i | \text{\em Tor}_{-i}^{H(A)}(H(M), \K)\neq 0 \}+1.
$$ 
The same assertion as above holds for the homological case. 
\end{lem}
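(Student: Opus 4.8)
The plan is to realize a minimal free resolution of the $H(A)$-module $H(M)$ by a finite semi-free resolution of $M$ over $A$, and then to invoke Theorem \ref{thm:ABIM-I}. First I would set $p=\text{pd}_{H(A)}(H(M))$; if $p=\infty$ there is nothing to prove, so assume $p<\infty$. Since $A$ is non-negatively graded with $H^0(A)=\K$, the homology algebra $H(A)$ is connected graded, so the bounded-below module $H(M)$, whose groups $\text{Tor}^{H(A)}_n(H(M),\K)$ are finite dimensional by hypothesis, admits a minimal free resolution
$$
0 \to V_p\otimes H(A) \to \cdots \to V_1\otimes H(A) \to V_0\otimes H(A) \to H(M) \to 0,
$$
in which $V_n\cong \text{Tor}^{H(A)}_n(H(M),\K)$ is a finite dimensional graded vector space concentrated in degrees $\ge N$, with $V_n=0$ for $n>p$ and $V_p\ne 0$. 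I would also record that the equality asserted in the lemma is just the standard identity $\text{pd}_{H(A)}(H(M))=\sup\{n\mid \text{Tor}^{H(A)}_n(H(M),\K)\ne 0\}$ over a connected graded algebra, obtained from minimality of this resolution.

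Next I would promote the resolution to the DG level over $A$. Inductively in $n$, I would construct DG $A$-submodules $0=F^{-1}\subset F^0\subset \cdots \subset F^p$ of a semi-free DG module $F$, together with a morphism $F\to M$ of DG $A$-modules, so that, as graded $A^\natural$-modules, $F^n/F^{n-1}\cong \Sigma^{\,n}(V_n\otimes A)$, the suspension bookkeeping coming from the mapping-cone construction. At the $n$th stage I would choose a morphism $\xi_n\colon V_n\otimes A\to F^{n-1}$ of DG modules whose effect on homology realizes the $n$th differential of the minimal resolution above, and set $F^n=F^{n-1}\cup_{\xi_n}C(V_n\otimes A)$ by the pushout construction recalled in Section 3; the vanishing of the obstructions ensuring $\xi_n$ exists and becomes appropriately null-homotopic after pushing into $M$ is the usual minimal-resolution obstruction theory. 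Using that $H(A)$ is connected and $H(M)$ is bounded below, the internal degrees of the generators $V_n$ strictly increase with $n$, so the filtration is degreewise finite, the associated spectral sequence — whose $E_1$-term, computed from $1\otimes d_A$, is exactly the complex $(V_\bullet\otimes H(A),\partial)$ — converges, and since that complex has homology $H(M)$ concentrated in resolution degree $0$ and vanishes in resolution degrees $>p$, I get $F^p=F$ and that $F\to M$ is a quasi-isomorphism.

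Then $\{F^n\}_{0\le n\le p}$ is a semi-free filtration of $F$ of class at most $p$, and it is finite because each $V_n\otimes A$ is a finitely generated free $A$-module, using the finite dimensionality of $\text{Tor}^{H(A)}_n(H(M),\K)$. Since $F\simeq M$ in $\D(A)$, Theorem \ref{thm:ABIM-I} yields $\text{level}_{\D(A)}(M)\le p+1$, which is the asserted inequality. The homological case would follow by the same argument after reindexing cohomological degrees as negatives of homological ones, replacing the hypotheses ``$A^i=0$ for $i<0$'' and ``$H^j(M)=0$ for $j<N$'' by their chain-level analogues.

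I expect the main obstacle to be the inductive construction of $F$ in the second paragraph, together with the verification that the process terminates at stage $p$ and recomputes $H(M)$: this is an obstruction-theoretic argument essentially parallel to the proof of \cite[Theorem 5.5]{ABIM}, and it is precisely here that every hypothesis is used — non-negativity of $A$ and $H^0(A)=\K$ to make minimal resolutions behave, the lower bound $N$ on $H^*(M)$ to force convergence of the filtration spectral sequence, and the finiteness of the $\text{Tor}$ groups to keep the filtration finite.
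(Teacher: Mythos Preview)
Your proposal is correct and follows the same route as the paper: build a minimal free resolution of $H(M)$ over $H(A)$, lift it to a finite semi-free resolution of $M$ over $A$, and apply Theorem~\ref{thm:ABIM-I}. The only difference is in packaging the lifting step---the paper invokes \cite[Theorem 2.1]{G-M} (the Gugenheim--May construction) directly to produce the semi-free model $\bigoplus_i (X_i\otimes A)\stackrel{\simeq}{\to} M$ with filtration $F^n=\bigoplus_{i\le n}X_i\otimes A$, rather than sketching the inductive cone construction and spectral-sequence verification by hand as you do.
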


\begin{proof} Suppose that 
$\text{pd}_{H(A)}(H(M))+1=l < \infty$. 
Then we have a projective resolution of $H(M)$ as a right $H(A)$-module of
the form 
$$
0 \to P_l \to \cdots \to P_1 \to P_0 \to H(M) \to 0. 
$$ 
We can assume that $P_{l,k}=0$ for $k < N$. 
Since $H^0(A)=\K$, it follows from 
\cite[12.2.8 Theorem]{M-R} that each $P_i$ is a free $H(A)$-module, say 
$P_i=X_i \otimes_\K H(A)$; see also \cite[page 274, Remark 1]{F-H-T}.
We further assume that the resolution is minimal. 
Observe that the same argument as in the proof of \cite[Theorem 2.4]{Roberts} is
applicable when constructing a minimal projective resolution of $H(M)$
as an $H(A)$-module since $H(M)$ and $H(A)$ are locally finite.  
By assumption, we have 
$\dim \text{Tor}_{-i}^{H(A)}(H(M), \K) < \infty$ for $i \leq 0$. 
This fact yields that $\dim X_i < \infty$. 

The result 
\cite[Theorem 2.1]{G-M}
implies that there exists a quasi-isomorphism  
$\oplus_i\widetilde{P}_i \stackrel{\simeq}{\to} M$ such that   
$\widetilde{P}_i = X_i\otimes_\K A$ as an $A^\natural$-module and that  
$\oplus_i\widetilde{P}_i$ admits semi-free filtration $\{F^n\}$ 
with $F^n = \oplus_{i\leq n}\widetilde{P}_i$.  
By Theorem \ref{thm:ABIM-I}, we have 
$\text{level}_{\text{D}(A)}(M) \leq l+1$.
This completes the proof. 
\end{proof}

\begin{lem}
\label{lem:app2}
Let $A$ be a graded algebra over a field and $M$ a right
$A$-module. Then 
$\text{\em pd}_{A}(M)+1 \leq \text{\em level}_{\text{\em D}(A)}(M)$.
\end{lem}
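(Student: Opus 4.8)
The plan is to show that a finite semi-free filtration of $M$ as a DG module over $A$ (viewed as a DGA concentrated in degree $0$, so that $H(A)=A$ and $\D(A)$ is the usual derived category of $A$-modules) gives rise, after passing to retracts and homology, to a projective resolution of $M$ of bounded length. First I would invoke Theorem \ref{thm:ABIM-I}: if $\text{level}_{\D(A)}(M)=l$, then $M$ is a retract in $\D(A)$ of a DG module $N$ admitting a finite semi-free filtration $\{F^n\}_{-1\le n\le l-1}$ of class at most $l-1$, with $F^{-1}=0$, $F^{l-1}=N$, and each subquotient $F^n/F^{n-1}$ a finite direct sum of shifts of $A$. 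Since $A$ is an ordinary algebra, each $F^n/F^{n-1}$ is a (finitely generated) free $A$-module placed in a single cohomological degree in each summand.

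The key step is then to unwind the mapping-cone structure recalled in Section 3: each $F^n$ is built from $F^{n-1}$ by a mapping cone on a map $\xi_n : Z_n\otimes A \to F^{n-1}$ of DG $A$-modules, where $Z_n$ is a finite-dimensional graded $\K$-vector space with zero differential. Unravelling the totalization, $N$ is quasi-isomorphic to the total complex of a bounded complex of free $A$-modules $0 \to Z_{l-1}\otimes A \to \cdots \to Z_0\otimes A \to 0$ (the horizontal maps being built from the structure maps of the filtration), i.e. $N$ is isomorphic in $\D(A)$ to a bounded-above, bounded-length complex of free $A$-modules concentrated in homological degrees $0,\dots,l-1$. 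Hence $N$ has a free resolution of length $\le l-1$; thus $\text{pd}_A(H^0\text{-part})\le l-1$ in the relevant sense, and more precisely $N$ is isomorphic in $\D(A)$ to an object of the bounded derived category with projective amplitude $\le l-1$.

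Next I would use that $M$ is a retract of $N$ in $\D(A)$: projective dimension, interpreted via vanishing of $\text{Ext}_A^{>k}(-,?)$ or via the existence of a length-$k$ projective resolution, is inherited by retracts (direct summands) in the derived category, since $\text{Ext}_A^j(M,-)$ is a direct summand of $\text{Ext}_A^j(N,-)$. Therefore $\text{pd}_A(M)\le l-1$, which is exactly $\text{pd}_A(M)+1\le l=\text{level}_{\D(A)}(M)$.

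The main obstacle I anticipate is the careful identification of the totalized semi-free filtration with an honest complex of projectives: one must check that the mapping-cone differentials assemble into a genuine chain complex (the relations $\xi_n\circ(\text{previous structure maps})$ behave correctly because $d^2=0$ in the totalization), and that the resulting bounded complex of frees is quasi-isomorphic to $N$ — this is where the boundedness ``class at most $l-1$'' is essential and where one uses that $A$ sits in degree $0$ so there are no internal differentials on the $A$ factors to complicate the spectral sequence of the filtration. Once that structural statement is in hand, the retract argument is routine.
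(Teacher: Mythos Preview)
There is a genuine gap. Your opening move already misreads the hypothesis: $A$ is a \emph{graded} algebra, not one concentrated in degree~$0$, so $\D(A)$ here is the derived category of DG modules carrying a single grading, and the identification with ``the usual derived category of $A$-modules'' (with its separate homological degree) is not available. More seriously, your central structural claim fails even if one pretends $A$ is ungraded. A semi-free filtration $F^0\subset\cdots\subset F^{l-1}=N$ of class $l-1$ does \emph{not} exhibit $N$ as the totalization of a length-$l$ chain complex of frees: the shifts occurring in $F^n/F^{n-1}\cong\bigoplus_j\Sigma^{k_{n,j}}A$ bear no relation to the filtration index~$n$, and the differential of $N$ may carry $F^n$ into $F^{n-2},F^{n-3},\ldots$, not only into $F^{n-1}$. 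So the ``horizontal maps'' you propose do not assemble into a chain complex, and the check you flag as the ``main obstacle'' in fact fails. Concretely, over $A=\K$ the object $\K\oplus\Sigma^{-5}\K$ has level~$1$ (it is already a finite sum of shifts of $A$) but, as a complex of projectives, cannot be represented in an interval of length~$0$: its homology sits in degrees $0$ and $5$. Thus ``level $\le l$'' does not imply ``projective amplitude $\le l-1$'', and the Ext-vanishing you want for the retract $M$ cannot be extracted from the filtration in the way you describe.

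The paper's route is different and sidesteps this obstacle entirely: it invokes the proof of \cite[Lemma~2.4]{Kr-Ku}, which is a Ghost-Lemma argument (the Ghost Lemma is recalled here as Lemma~\ref{lem:Krause-Kussin}). One starts instead from a projective resolution of $M$ of length $p=\text{pd}_A(M)$; the connecting morphisms $\Omega^iM\to\Sigma\,\Omega^{i+1}M$ in the syzygy triangles are ghosts, their $p$-fold composite $M\to\Sigma^{p}\Omega^{p}M$ is nonzero, and the Ghost Lemma then forces $M\notin\text{\tt thick}^{\,p}_{\D(A)}(A)$, i.e.\ $\text{level}_{\D(A)}(M)\ge p+1$. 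That argument works uniformly for graded $A$ and never needs to reinterpret the semi-free filtration as a chain complex.
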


\begin{proof}
The assertion follows from the proof of the result \cite[Lemma
2.4]{Kr-Ku} due to Krause and Kussin.  
\end{proof}

By Lemmas \ref{lem:app1} and \ref{lem:app2}, we have 

\begin{cor}
Under the same assumption as in Lemmas \ref{lem:app1}, 
$$
\text{\em level}_{\text{\em D}(A)}(M) \leq 
\text{\em level}_{\text{\em D}(H(A))}(H(M)). 
$$
\end{cor}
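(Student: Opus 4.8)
The plan is to obtain this inequality essentially for free by composing the two preceding lemmas, which between them sandwich $\text{level}_{\text{D}(A)}(M)$ on either side of the quantity $\text{pd}_{H(A)}(H(M))+1$.

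First I would apply Lemma \ref{lem:app1}. The hypotheses of the corollary are exactly those of that lemma, so it delivers at once
$$
\text{level}_{\text{D}(A)}(M) \leq \text{pd}_{H(A)}(H(M))+1 .
$$
If $\text{pd}_{H(A)}(H(M))=\infty$ the claimed inequality holds vacuously, so from now on this projective dimension may be assumed finite, and in particular it is the common value occurring in both estimates.

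Next I would apply Lemma \ref{lem:app2} to the graded algebra $H(A)$ and the right $H(A)$-module $H(M)$; here $H(A)$ is viewed as a DGA with zero differential so that $\text{D}(H(A))$ is defined, and $H(M)$ becomes a DG module over it with zero differential. Lemma \ref{lem:app2} imposes no condition on the algebra beyond its being a graded algebra over a field, so it applies directly and gives
$$
\text{pd}_{H(A)}(H(M))+1 \leq \text{level}_{\text{D}(H(A))}(H(M)) .
$$
Chaining the two displays yields the assertion. I do not expect any real obstacle: the whole argument is a two-line concatenation, and the only point worth a word of comment is the bookkeeping just mentioned, namely that the finiteness assumption built into Lemma \ref{lem:app1} is precisely what makes $\text{pd}_{H(A)}(H(M))$ a genuine integer common to the upper and lower bounds, so that the two inequalities compose without any $\infty$-arithmetic subtlety.
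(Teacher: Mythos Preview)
Your proof is correct and is exactly the paper's approach: the corollary is obtained by chaining Lemma~\ref{lem:app1} and Lemma~\ref{lem:app2} through the common quantity $\text{pd}_{H(A)}(H(M))+1$. One small quibble: the case $\text{pd}_{H(A)}(H(M))=\infty$ is not quite ``vacuous'' for the stated inequality---rather, Lemma~\ref{lem:app2} then forces $\text{level}_{\text{D}(H(A))}(H(M))=\infty$, whence the inequality is trivial.
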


\medskip
\noindent
{\it Proof of Proposition \ref{prop:pd}.}
Lemma \ref{lem:app1} and its proof yield that 
$$\text{level}_{\text{D}(C^*(B))}(E) \leq 
\text{pd}_{H^*(B)}(H^*(E))+1 =
\text{sup}\{ i | \text{Tor}_{-i}^{H^*(B)}(H^*(E), \K)\neq 0)+1. 
$$ 
We view the fibration $p : E \to B$ as a pull-back of itself by the
identity map $B\to B$. 
Since $p$ is $\K$-formalizable, it follows that $(p, id_B)$ is 
a $\K$-formalizable pair. By Theorem \ref{thm:level_formal}, we see that 
\begin{eqnarray*}
\text{level}_{\text{D}(C^*(B))}(E) 
\! &=& \!  
\text{level}_{\text{D}(H^*(B))}
(H^*(E)\otimes_{H^*(B)}^{{\mathbb L}}H^*(B) )  \\
\! &=& \!\text{level}_{\text{D}(H^*(B))}(H^*(E)).  
\end{eqnarray*} 
Lemma \ref{lem:app2} implies that 
$\text{pd}_{H^*(B; \K)}(H^*(E; \K))+1 \leq 
\text{level}_{\text{D}(H^*(B))}(H^*(E))$. We have the result.  
\hfill\qed

\begin{rem}
The inequality in Lemma \ref{lem:app1} may be strict. For example, we consider the Hopf map 
$S^3 \to S^2$ with fibre $S^1$. Then $C^*(S^3 ; \K)$ is viewed as
$C^*(S^2; \K)$-module via the Hopf map and hence it is 
in $\D(C^*(S^2; \K))$. The results \cite[Proposition 2.10]{K2} and
\cite[Proposition 6.6]{Schmidt} allow us to conclude that   
$\text{level}_{\D(C^*(S^2; \K))}(S^3)=2$. 
On the other hand, we can
construct a minimal projective resolution of 
$H^*(S^3; \K)$ as an $H^*(S^2; \K)$-module of the form 
$$
{\mathcal K}=(\wedge(x_3)\otimes \Gamma[w]\otimes \wedge (s^{-1}x_2) \otimes
\K[x_2]/(x_2^2), d) \to \wedge(x_3) \to 0, 
$$
for which $d(w)=s^{-1}x_2\cdot x_2$, $\text{bideg} \ s^{-1}x_2=(-1, 2)$, 
$\text{bideg} \ w=(-2, 4)$,  
$x_2$ and $x_3$ are generators of $H^*(S^2; \K)$ and $H^*(S^3; \K)$,
respectively. Here $\Gamma[w]$ denotes the divided power algebra
generated by $w$.  We see that 
$$
\text{Tor}^{H^*(S^2; \K)}_*(H^*(S^3; \K), \K)=
\wedge(x_3)\otimes \Gamma[w]\otimes \wedge (s^{-1}x_2). 
$$
It is readily seen that the torsion product is of infinite dimension.   
This implies that $\text{pd}_{H^*(S^2; \K)}(H^*(S^3; \K))=\infty$. 
\end{rem}

\medskip
We conclude this section by deducing a lower bound of the level.  

Let $A$ be a DGA. 
Following Hovey and Lockridge \cite{Ho-L}, we call a map $f: M \to N$ in
$\D(A)$ a ghost if $H(f)=0$. Moreover $M \in \D(A)$ is said to have
ghost length $n$, denoted $\text{gh.len.} M = n$, if every composition 
$$
M \stackrel{f_1}{\to} Y_1  \stackrel{f_2}{\to} \cdots
\stackrel{f_{n+1}}{\to} Y_{n+1}
$$
of $n+1$ ghost is trivial in $\D(A)$, and there exists a composite of
$n$ ghosts from $M$ is non trivial in $\D(A)$.

\begin{prop} \cite[Lemma 6.7]{Schmidt}
\label{prop:ghost}
For any $M \in \D(A)$, one has 
$$
\text{\em gh.len.} M +1 \leq \text{\em level}_{\D(A)}(M).  
$$
\end{prop}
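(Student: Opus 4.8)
The plan is to deduce this from the standard \emph{Ghost Lemma}: if $X$ belongs to $\text{\tt thick}^n_{\D(A)}(A)$, then every composite of $n$ ghosts issuing from $X$ is zero in $\D(A)$. Granting this, one applies it with $n=\text{level}_{\D(A)}(M)$ (the statement being vacuous when the level is infinite) and $X=M$: every composite of $\text{level}_{\D(A)}(M)$ ghosts out of $M$ vanishes, which by definition means $\text{gh.len.}\,M\le\text{level}_{\D(A)}(M)-1$, i.e.\ the claimed inequality. So the whole content is the Ghost Lemma.

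To prepare for it I would first record the homological reformulation of ``ghost'': since $H^i(N)\cong\text{Hom}_{\D(A)}(\Sigma^{-i}A,N)$ for any DG module $N$, a map $f$ is a ghost precisely when $\text{Hom}_{\D(A)}(\Sigma^i A,f)=0$ for every $i$; in particular $f$ annihilates every morphism from a shift of $A$ into its source. I would also note the obvious fact that pre-composing or post-composing a ghost with an arbitrary morphism again yields a ghost, because $H$ is a functor; this is what makes the reduction steps below legitimate.

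Then I would prove the Ghost Lemma by induction on $n$. For $n=1$, $X$ is a retract of a finite sum $P=\bigoplus_j\Sigma^{l_j}A$, say $X\xrightarrow{s}P\xrightarrow{r}X$ with $rs=\mathrm{id}$; for a ghost $f\colon X\to Y$ one has $f=(fr)s$, and $fr$ restricted to each summand $\Sigma^{l_j}A$ is $f$ composed with a morphism $\Sigma^{l_j}A\to X$, hence $0$, so $fr=0$ and $f=0$. For $n\ge2$, $X$ is a retract of some $X'\in(\text{\tt add}^{\Sigma}(A))^{*n}$; pre-composing the first ghost of a given composite with the retraction $X'\to X$ keeps it a ghost, so it suffices to treat $X'$. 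By associativity of $*$ there is a triangle $P\xrightarrow{\pi}X'\xrightarrow{\rho}Z\to\Sigma P$ with $P\in\text{\tt add}^{\Sigma}(A)$ and $Z\in(\text{\tt add}^{\Sigma}(A))^{*(n-1)}\subseteq\text{\tt thick}^{n-1}_{\D(A)}(A)$. Given ghosts $f_1,\dots,f_n$ out of $X'$, the composite $f_1\pi$ is a ghost out of $P$, hence $0$ by the case $n=1$; therefore $f_1$ factors as $g_1\rho$ for some $g_1\colon Z\to Y_1$. Now $f_2g_1$ is again a ghost, so $f_2g_1,f_3,\dots,f_n$ is a composite of $n-1$ ghosts out of $Z$, which vanishes by the inductive hypothesis; hence $f_n\circ\cdots\circ f_1=(f_n\circ\cdots\circ f_3\circ(f_2g_1))\circ\rho=0$.

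I do not anticipate a genuine obstacle; the argument is essentially bookkeeping. The one point that needs care is unwinding the definition $\text{\tt thick}^n_{\D(A)}(A)=\text{\tt smd}((\text{\tt add}^{\Sigma}(A))^{*n})$ correctly: using associativity of $*$ to split off a single factor of $\text{\tt add}^{\Sigma}(A)$ (a ``level one'' piece) from a ``level $n-1$'' remainder, handling the $\text{\tt smd}$ (retract) part by the composing-with-a-retraction trick, and checking at each reduction that the ghost property is preserved. Everything else—the homological translation of ``ghost'' and the factorization through $\rho$—is routine.
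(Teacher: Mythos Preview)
Your argument is correct and proves exactly the Ghost Lemma needed; the deduction of the proposition from it is the same as the paper's. The packaging differs, however. The paper does not carry out the induction on $n$ by hand: instead it invokes an abstract lemma of Rouquier and Krause--Kussin stating that if $F_1,\dots,F_n$ is a sequence of natural transformations between cohomological functors $\D^{\mathrm{op}}\to\mathcal{A}b$, each vanishing on $\text{\tt thick}^1_{\D}({\mathcal X})$, then the composite $F_n\circ\cdots\circ F_1$ vanishes on $\text{\tt thick}^n_{\D}({\mathcal X})$. It then applies this with $F_i=(f_i)_*$ acting on $\text{Hom}_{\D(A)}(-,Y_i)$, notes that each $(f_i)_*$ vanishes on $\text{\tt thick}^1_{\D(A)}(A)$ because $f_i$ is a ghost, and evaluates the resulting vanishing at $\mathrm{id}_M$. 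Your approach unfolds essentially the same induction (the cohomological-functor lemma is itself proved by the triangle argument you give), but does so concretely at the level of morphisms rather than functors; this buys a self-contained proof with no external citation, at the price of handling the retract and factorization steps explicitly.
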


In order to prove Proposition \ref{prop:ghost}, we recall the so-called Ghost lemma. 

\begin{lem}\cite{R} \cite[Lemma 2.3]{Kr-Ku}
\label{lem:Krause-Kussin}
Let $\D$ be a triangulated category and let 
$$
H_1 \stackrel{F_1}{\to} H_2 \stackrel{F_2}{\to}\cdots \stackrel{F_{n+1}}{\to}H_{n+1}  
$$
be a sequence of morphism between cohomological functors 
$\D^{\text{\em op}} \to \mathcal{A}b$. 
Let ${\mathcal X}$ be a subcategory of $\D$ such that $F_i$ vanishes on 
$\text{\tt thick}^1_{\D}({\mathcal X}) = 
\text{\tt smd}((\text{\tt add}^{\Sigma}({\mathcal X})))$. Then the
composite 
$F_n \circ \cdots \circ F_1$ vanishes on $\text{\tt thick}^n_{\D}({\mathcal X})$. 
\end{lem}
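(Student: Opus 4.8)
The plan is to prove the lemma by induction on $n$, the induction step being a short diagram chase through the long exact sequences attached to the cohomological functors $H_i$. Before starting the induction I would strip the operation $\text{\tt smd}$ out of $\text{\tt thick}^n_{\D}({\mathcal X})=\text{\tt smd}((\text{\tt add}^{\Sigma}({\mathcal X}))^{*n})$: if $C$ is a retract of $C'$, say $i\colon C\to C'$ and $r\colon C'\to C$ with $ri=\mathrm{id}_C$, then applying the contravariant functor $H_1$ shows that $H_1(i)$ has the right inverse $H_1(r)$ and so is an epimorphism, and naturality of $G:=F_n\circ\cdots\circ F_1$ along $i$ gives $G_C\circ H_1(i)=H_{n+1}(i)\circ G_{C'}$; hence $G_{C'}=0$ forces $G_C=0$. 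Thus it suffices to prove that $F_n\circ\cdots\circ F_1$ vanishes on $(\text{\tt add}^{\Sigma}({\mathcal X}))^{*n}$, and I note that since a cohomological functor carries a split triangle to a split short exact sequence, no additivity of the $H_i$ beyond what is built into the hypothesis is needed.

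For the induction, the case $n=1$ is exactly the hypothesis that $F_1$ vanishes on $\text{\tt thick}^1_{\D}({\mathcal X})$. Assuming the statement for composites of $n-1$ such transformations, I would factor $G=F_n\circ G'$, where $G'=F_{n-1}\circ\cdots\circ F_1\colon H_1\to H_n$ is a composite of $n-1$ transformations each vanishing on $\text{\tt thick}^1_{\D}({\mathcal X})$; by the inductive hypothesis $G'$ vanishes on $\text{\tt thick}^{n-1}_{\D}({\mathcal X})$, which contains $(\text{\tt add}^{\Sigma}({\mathcal X}))^{*(n-1)}$. Given $C\in(\text{\tt add}^{\Sigma}({\mathcal X}))^{*n}$, I would use associativity of $*$ to write $(\text{\tt add}^{\Sigma}({\mathcal X}))^{*n}=(\text{\tt add}^{\Sigma}({\mathcal X}))^{*(n-1)}*\text{\tt add}^{\Sigma}({\mathcal X})$ and so choose a triangle $M\xrightarrow{a}C\xrightarrow{b}N\xrightarrow{c}\Sigma M$ with $M\in(\text{\tt add}^{\Sigma}({\mathcal X}))^{*(n-1)}$ and $N\in\text{\tt add}^{\Sigma}({\mathcal X})\subseteq\text{\tt thick}^1_{\D}({\mathcal X})$. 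Then for $x\in H_1(C)$: naturality of $G'$ along $a$ together with $G'_M=0$ gives $H_n(a)\bigl(G'_C(x)\bigr)=0$, so by exactness of $H_n$ on the triangle $G'_C(x)=H_n(b)(y)$ for some $y\in H_n(N)$; hence $G_C(x)=(F_n)_C\bigl(H_n(b)(y)\bigr)=H_{n+1}(b)\bigl((F_n)_N(y)\bigr)=0$, where the middle equality is naturality of $F_n$ along $b$ and the last holds because $(F_n)_N=0$. As $x$ was arbitrary, $G_C=0$, which closes the induction and, combined with the retract reduction, would finish the proof.

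I expect the only real content to be this two-step vanishing in the induction step: first pushing $G'_C(x)$ into $\operatorname{im}H_n(b)$ using that the shorter composite $G'$ already kills the subobject $M$, then annihilating that image with the final ghost $F_n$ because the quotient $N$ lies in the first thickening. Everything else — the retract reduction, associativity of $*$, and keeping track of indices (I read the displayed chain in the statement as merely supplying the transformations, the assertion being that a composite of $n$ of them vanishes on the $n$th thickening) — should be routine.
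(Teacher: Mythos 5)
The paper offers no proof of this lemma—it is quoted verbatim from Krause--Kussin \cite[Lemma 2.3]{Kr-Ku}—and your argument is exactly the standard one given there: strip off \texttt{smd} by a retract/naturality argument, then induct using a triangle $M\to C\to N\to\Sigma M$ with $M$ in the $(n-1)$st layer and $N$ in the first, pushing $G'_C(x)$ into the image of $H_n(b)$ and killing it with the last ghost. Your handling of contravariance, the exactness of the cohomological functors on the triangle, and the (harmless) off-by-one in the statement's displayed chain are all correct, so the proof is fine.
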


\noindent
{\it Proof of Proposition \ref{prop:ghost}}. For an object $M \in
\D(A)$, suppose that there exists a composite $
M \stackrel{f_1}{\to} Y_1  \stackrel{f_2}{\to} \cdots
\stackrel{f_{n}}{\to} Y_{n}$ 
of $n$ ghosts which is non trivial in $\D(A)$. 
We have a sequence
$$
\text{Hom}_{\D(A)}(- , M) \stackrel{(f_1)_*}{\to} 
\text{Hom}_{\D(A)}(- , Y_1)  \stackrel{(f_2)_*}{\to} \cdots
\stackrel{(f_{n})_*}{\to} \text{Hom}_{\D(A)}(- , Y_{n})
$$
of morphisms between cohomological functors. 
Since $\text{Hom}_{\D(A)}(\Sigma^{-n}A , M)=H^n(M)$ and each $f_i$ is a
ghost, it follows that 
$(f_i)_*$ is vanishes on $\text{\tt thick}^1_{\D}(A)$. 
By Lemma \ref{lem:Krause-Kussin}, we see that   
$(f_n)_* \circ \cdots \circ (f_1)_*$ vanishes on $\text{\tt thick}^n_{\D}(A)$. 
Thus if $M$ is in $\text{\tt thick}^n_{\D}(A)$, then 
$f_n\circ \cdots \circ f_1 = (f_n)_*\circ \cdots \circ (f_1)_*(id_M) :  M \to Y_n$
is trivial in $\D(A)$, which is a contradiction and hence 
we have the result. \hfill\qed

%




\medskip
\noindent
{\it Acknowledgments.} 
I am grateful to Norio Iwase for drawing my attention to the relationship
between the L.-S. category and the level. I also thank the referee for careful reading of a previous version of this paper.


\begin{thebibliography}{99}
\bibitem{ABIM}L. L. Avramov, R. -O. Buchweitz, S. B. Iyengar and C. Miller, 
Homology of perfect complexes, Adv. Math. {\bf 223}(2010), 1731-1781.
%
\bibitem{Baum}P. F. Baum, On the cohomology of homogeneous spaces,
	Topology {\bf 7}(1968), 15-38.  
%
\bibitem{B-S}P. F. Baum and L. Smith, 
Real cohomology of differential Fibre bundles, 
Comment. Math. Helv. {\bf 42}(1967), 171-179. 
%
\bibitem{B-G}D. J. B. Benson and J. P. C. Greenlees, Complete
	intersections and derived category,  preprint (2009), 
 arXiv: {\tt math.AC/0906.4025v1}.  
%
\bibitem{B-B}A. Bondal and M. Van den Bergh, Generetors and
	representability of functors in 
commutative and non-commutative geometry, Moscow Math. J. {\bf 3}(2003),
	1-36. 
%
\bibitem{B-P}V. M. Buchstaber and T. E. Panov, 
Torus actions and their applications in topology and combinatorics. 
University Lecture Series, {\bf 24}. 
American Mathematical Society, 2002. 
%
\bibitem{DJ}M. W. Davis and T. Januszkiewicz, Convex polytopes, Coxeter
	orbifolds and torus actions, Duke Math. J. {\bf 62}(1991), 417-451. 
%
\bibitem{D-G-I}W. G. Dwyer, J. P. C. Greenlees and S. Iyengar, 
Duality in algebra and topology, Adv. Math.  {\bf 200}(2006), 357-402.  
%
\bibitem{E}M. El haouari, $p$-formalit\'e des espaces, 
J. Pure Appl. Algebra, {\bf 78}(1992), 27-47. 
%
\bibitem{JHE} J. -H. Eschenburg, New examples of manifolds with strictly
	positive curvature, Invent. Math. {\bf 66}(1982), 469-480.
%
\bibitem{FHT_GSpace}Y. F\'elix, S. Halperin and J.-C. Thomas, Gorenstein
	Spaces, Adv. Math. {\bf 71}(1988), 92-112. 
%
\bibitem{FHT}Y. F\'elix, S. Halperin and J.-C. Thomas, 
Differential graded algebras in topology, in: I.M. James (Ed.), 
Handbook of Algebraic Topology, Elsevier, Amsterdam, 1995, pp. 829--865.
%
\bibitem{F-H-T}Y. F\'elix, S. Halperin and J. -C. Thomas,
Rational Homotopy Theory,
Graduate Texts in Mathematics {\bf 205}, Springer-Verlag.
%
\bibitem{G-H-S}J. P. C. Greenlees, K. Hess and S. Shamir, Complete
	intersections in rational homotopy theory, preprint (2009), 
 arXiv: {\tt math.AC/0906.3247v1}.  

%
\bibitem{G-M}V. K. A. M. Gugenheim and J. P. May, 
On the Theory and Applications of
	Differential Torsion Products, Memoirs of Amer. Math. Soc. 
{\bf 142} 1974.
%
\bibitem{H-L}S. Halperin and J. -M. Lemaire, Notions of category in 
differential algebra, Algebraic Topology: Rational Homotopy, 
Springer Lecture Notes in Math., Vol. 1318, Springer, Berlin, New York, 
1988, pp. 138-154.
%
\bibitem{H}D. Happel, On the derived category of a finite-dimensional
	algebra, Comment. Math. Helv. {\bf 62}(1987), 339-389. 

\bibitem{H2}D. Happel, Triangulated categories in the representation
	theory of finite-dimensional algebras, 
London Mathematical Society Lecture Note Series, 119. 
Cambridge University Press, Cambridge, 1988.
%
\bibitem{Hau}V. Hauschild, Deformations and the rational homotopy of the
	monoid of fibre homotopy equivalences, Illinois Journal of
	Math. {\bf 37}(1993), 537-560.
%
\bibitem{Hess}K. Hess, A proof of Ganea conjecture for rational spaces, Topology {\bf 30}(1991), 
205-214. 
%
\bibitem{Ho-L}M. Hovey and K. Lockridge, The ghost dimension of a ring, 
Proc. Amer. Math. Soc., {\bf 137} (2009), 1907-1913. 
%
\bibitem{Id}E. Idrissi, Quelques contre-exemples pour la LS-cat\'egorie d'une alg\`ebre de cocha\^ies, Ann. Inst. Fourier 
{\bf 41}(4) (1991), 989-1003. 
%
\bibitem{J}P. J{\o}rgensen, Auslander-Reiten theory over topological
	spaces, Comment. Math. Helv. {\bf 79}(2004), 160-182. 
%
\bibitem{J2}P. J{\o}rgensen, The Auslander-Reiten quiver of a Poincar\'e
	duality space, Algebr. Represent. Theory {\bf 9}(2006), 323-336.
%
\bibitem{J3}P. J{\o}rgensen, 
Calabi-Yau categories and Poincar\'e duality spaces, preprint (2008), 
arXiv:{\tt math.RT/0801.2052v2}. 
%
\bibitem{Kahl}T. Kahl, On the algebraic approximation 
of Lusternik-Schnirelmann category,  
J. Pure Appl. Algebra  {\bf 181}(2003), 227-277.
%
\bibitem{Kahl2}T. Kahl, Note on L.-S. category and DGA modules, Bell. Belg. Math. Soc. {\bf 13}(2006), 703-717. 
%
\bibitem{Keller}B. Keller, Deriving DG categories, Ann. Sci. \'Ecole
	Norm. Sup. (4) {\bf 27}(1994), 63-102. 
%
\bibitem{K-K}A. Kono and K. Kuribayashi, Module derivations and
	cohomological splitting of adjoint bundles, Fundamenta Math. 
{\bf 180}(2003), 199-221. 
%
\bibitem{Kr-Ku}H. Krause and D. Kussin, Rouquier's theorem on
	representation dimension, Representations of Algebras and
	Related Topics, Contemp. Math. {\bf 406}, Amer. Math. Soc. 2006, 95-103.
%
\bibitem{K-M}I. Kriz and  J. P. May, Operads, algebras, modules and motives. 
Ast\'erisque, no. 233,1995.
%
\bibitem{K}K. Kuribayashi, 
On the mod $p$ cohomology of spaces of free loops on
the Grassmann and Stiefel manifolds,
J. Math. Soc. Japan {\bf 43}(1991), 331-346.
%
\bibitem{K_fibration}K. Kuribyashi, Mod $p$ equivariant cohomology of
	homogeneous spaces, J. Pure Appl. Algebra  {\bf 147}(2000), 95-105. 
%
\bibitem{K1}K. Kuribayashi, 
The cohomology of a pull-back on $\K$-formal spaces, 
Topology Appl. {\bf 125}(2002), 125-159. 
%
\bibitem{K2}K. Kuribayashi, 
On the levels of spaces and topological realization
of objects in a triangulated category,  
preprint (2009).  
%
\bibitem{K3}K. Kuribayashi, On the rational cohomology of the total space of 
the universal fibration with an elliptic fibre, to appear in
	Contemporary Math. . 
%
\bibitem{K-Y}K. Kuribayashi and T. Yamaguchi, The cohomology algebra of
	certain free loop spaces, Fund. Math.  {\bf 154}(1997), 57-73.  
%
\bibitem{M-R}J. C. McConnel and J. C. Robson, Noncommutative noetherian
	rings, Graduate Stud. Math {\bf 30}, Amer. Math. Soc. 2001. 
%
\bibitem{Mu}H. J. Munkholm, The Eilenberg-Moore spectral sequence and strongly 
homotopy multiplicative maps, J. Pure  Appl. Algebra  {\bf 5}(1974), 1-50.
%
\bibitem{N-R}D. Notbohm and N. Ray, On Davis-Januszkiewicz homotopy
	types I; formality and rationalisation, Algebraic \& Geometric
	Topology {\bf 5}(2005), 31-51. 
%
\bibitem{Pa}T. Panov, Cohomology of face rings, and torus actions, preprint
	(2007),  arXiv: {\tt math.AT/0506526v3}. 
%
\bibitem{Roberts}P. Roberts, Homological invariants of modules over
	commutative rings, Sem. Math. Sup. {\bf 72}, Presses
	Univ. Montr\'eal, Montr\'el, 1980. 
%
\bibitem{R}R. Rouquier, Dimensions of triangulated categories,  
J. K-Theory {\bf 1}(2008), 193-256.
%
\bibitem{Schmidt}K. Schmidt, Auslander-Reiten theory 
for simply connected differential graded algebras, 
preprint (2008), arXiv:{\tt math.RT/0801.0651v1}. 
%
\bibitem{Sin}W. Singhof, On the topology of double coset manifolds, 
Math. Ann.  {\bf 297}(1993), 133-146.   
%
\bibitem{S}L. Smith, On the characteristic zero cohomology of the free
	loop sapce, Amer. J. Math. {\bf 103}(1981), 887-910.
\end{thebibliography}
\end{document}